\newcommand\sC{{\mathcal C}}
\newcommand\sT{{\mathcal T}}
\newcommand\sD{{\mathcal D}}
\newcommand\sE{{\mathcal E}}
\newcommand\sA{{\mathcal A}}
\newcommand\sG{{\mathcal G}}
\newcommand\sI{{\mathcal I}}
\newcommand\sL{{\mathcal L}}
\newcommand\sZ{{\mathcal Z}}
\newcommand\sB{{\mathcal B}}
\newcommand\sN{{\mathcal N}}
\newcommand\sX{{\mathcal X}}
\newcommand\sY{{\mathcal Y}}
\newcommand\sH{{\mathcal H}}
                 \newcommand\sM{{\mathcal M}}
                 \newcommand\XX{{\mathfrak X}}
\newcommand\Ga{\Gamma}
\newcommand\De{\Delta}
\newcommand\de{\delta}
\DeclareMathOperator{\Def}{Def}
\def\Bbb{\bf}
\newcommand{\CC}{\ensuremath{\mathbb{C}}}
\newcommand{\RR}{\ensuremath{\mathbb{R}}}
\newcommand{\ZZ}{\ensuremath{\mathbb{Z}}}
\newcommand{\QQ}{\ensuremath{\mathbb{Q}}}
\newcommand{\sS}{\ensuremath{\mathcal{S}}}
\newcommand{\hol}{\ensuremath{\mathcal{O}}}
\newcommand{\PP}{\ensuremath{\mathbb{P}}}
\newcommand{\FF}{\ensuremath{\mathbb{F}}}
\newcommand{\HHH}{\ensuremath{\mathcal{H}}}
\newcommand{\ra}{\ensuremath{\rightarrow}}
\def\eea{\end{eqnarray*}}
\def\bea{\begin{eqnarray*}}
\newcommand\dual{\mathrel{\raise3pt\hbox{$\underline{\mathrm{\thinspace d
\thinspace}}$}}}
\newcommand\qe{\ifhmode\unskip\nobreak\fi\quad $\Box$}       % box for QED
\def\BOX{\hfill\lower.5\baselineskip\hbox{$\Box$}}
\newtheorem{theo}[equation]{Theorem}
\newtheorem{remarkk}[equation]{Remark}
\newenvironment{rem}{\begin{remarkk}\rm}{\end{remarkk}}
\newtheorem{defin}[equation]{Definition}
\newtheorem{prop}[equation]{Proposition}
\newtheorem{cor}[equation]{Corollary}
\newtheorem{lemma}[equation]{Lemma}
\newtheorem{example}[equation]{Example}
\newcommand{\SSS}{\ensuremath{\mathcal{S}}}
\newcommand{\sR}{\ensuremath{\mathcal{R}}}
\newcommand{\X}{\ensuremath{\mathcal{X}}}
\newcommand{\Y}{\ensuremath{\mathcal{Y}}}
\newcommand{\T}{\ensuremath{\mathbb{T}}}
\def\C{{\Bbb C}}
\newcommand{\Proof}{{\it Proof. }}
\begin{document}

\title[Deformation and moduli]{ A superficial  working guide to deformations and moduli}
\author{ F. Catanese}\footnote{I owe to David Buchsbaum the joke that
 an expert on algebraic surfaces is  a `superficial' mathematician. }
\address {Lehrstuhl Mathematik VIII\\
Mathematisches Institut der Universit\"at Bayreuth\\
NW II,  Universit\"atsstr. 30\\
95447 Bayreuth}
\email{fabrizio.catanese@uni-bayreuth.de}

\thanks{The present work took place in the realm of the DFG
Forschergruppe 790 ``Classification of algebraic
surfaces and compact complex manifolds''.
A major part of the article was written, and the article was completed, when the author was a visiting research scholar at KIAS. }

%\date{\today}

\maketitle
%changes above here
%changes above here
%changes above here
%changes above here
%changes above here

{\em Dedicated to David Mumford  with admiration.}

\tableofcontents
\section*{Introduction}

There are several ways to look at moduli theory, indeed the same name 
can at a first glance disguise
completely different approaches to mathematical thinking; yet there is a substantial unity since,
although often with different languages and purposes,
the problems treated are substantially the same.

The most classical approach and motivation  is to consider moduli theory as the fine part 
of classification theory:
the big quest is not just to prove that certain moduli spaces  exist, but to 
use the study of  their structure in order to obtain geometrical informations
about the varieties one wants to classify; and using
each time  the most convenient incarnation of `moduli'.

For instance, as a slogan, we might think of   moduli theory and deformation theory
as analogues of the global study of an algebraic variety versus a local study of
its singularities, done using power series methods. On the other hand, the shape of an algebraic variety
is easily recognized when it has singularities!

In this article most of our attention will be cast on the case of 
complex algebraic surfaces, which is
already sufficiently intricate  to defy many attempts of investigation.
But we shall try, as much as possible, to treat the higher dimensional and more general cases as well.
We shall also stick to the world of complex manifolds and complex 
projective varieties,
which allows us to find so many beautiful connections to related 
fields of mathematics,
such as topology, differential geometry and symplectic geometry.

David Mumford clarified the concept of biregular moduli  through  a 
functorial definition,
which is extremely useful when we want a precise answer to  questions 
concerning
a certain class of algebraic varieties.  

The underlying elementary concepts  are the concept of normal forms, and of quotients of parameter spaces
by a suitable equivalence relation, often given by the action of
an appropriate group.
To give an idea through an elementary geometric problem: 
how many are the projective equivalence classes of smooth plane curves of degree
4 admitting 4 distinct collinear hyperflexes?

A birational approach to moduli existed before, since, by the work of 
Cayley, Bertini, Chow and van der Waerden,
varieties $X^n_d \subset \PP^N$ in a fixed projective space,  having 
a fixed  dimension $n$ and  a fixed degree
$d$ are parametrized by the so called Chow variety $\sC h(n;d;N)$, over 
which the projective group $G:= \PP GL (N+1,
\CC)$ acts. And, if $Z$ is an irreducible component of $\sC h(n;d;N)$, 
the transcendence degree of the field of
invariant rational functions $ \CC(Z)^G$ was classically called the 
number of polarized moduli for the
class of varieties parametrized by $Z$. This topic: `embedded varieties' is treated in the article by Joe Harris
in this Handbook.

A typical example leading to the concept of stability was: take the fourfold symmetric product $Z$ of 
$\PP^2$, parametrizing
4-tuples of points in the plane. Then $Z$ has dimension 8 and the 
field of invariants has transcendence degree 0.
This is not a surprise, since 4 points in linear general position are 
a projective basis, hence they are projectively
equivalent; but, if one takes 4 point to lie on a line, then there is 
a modulus, namely, the cross ratio.
This example, plus the other basic example given by the theory of Jordan normal forms 
of square matrices
(explained in \cite{Mum-Suom} in detail) guide our understanding of 
the basic problem of Geometric Invariant
Theory: in which sense may we consider the quotient of a 
variety by the action of an algebraic group.
In my opinion geometric invariant theory, in spite of its beauty and its conceptual simplicity, but in view of its difficulty,
is a foundational but  not a fundamental  tool 
in classification theory.
Indeed one of the most difficult results, due to Gieseker, is the 
asymptotic stability of
pluricanonical images of surfaces of general type; it has as an
important corollary the existence of a moduli space for the
canonical models of surfaces of general type, but the methods of 
proof do not shed light on
the classification of such surfaces (indeed boundedness for the 
families of surfaces with given invariants
had followed earlier by the results of Moishezon, Kodaira and Bombieri).

We use in our title the name `working': this may mean many things, but 
in particular here our goal is to show how
to use the methods of deformation theory in order to classify 
surfaces with given invariants.

The order in our exposition is more guided by historical development 
and by our education than by a stringent
logical nesting.

The first guiding concepts are the concepts of Teichm\"uller space and moduli space associated to an oriented
compact differentiable manifold $M$ of even dimension. These however are only defined as
topological spaces, and one needs the Kodaira-Spencer-Kuranishi theory in order to try to give the structure of
a complex space to them.

A first question which we investigate, and about which we give some new results (proposition \ref{kur=teich} and theorem
\ref{kur=teich-surf}), is:  when is Teichm\"uller space
locally homeomorphic to Kuranishi space?

 This equality has been often taken for granted, of course  under the assumption of the validity of
the so called Wavrik condition (see theorem \ref{kur3}), which  requires the dimension of the space of holomorphic vector fields to
be locally constant under deformation .

An important role plays the example of Atiyah about surfaces acquiring a node: we interpret it here as showing that Teichm\"uller space
is non separated (theorem \ref{nonseparated}). In section 4 we see that it also underlies some recent pathological behaviour of automorphisms of surfaces,
recently discovered together with Ingrid Bauer: even if deformations of canonical and minimal models are essentially the same, up to 
finite base change, the same does not occur for deformations of automorphisms (theorems \ref{main1} and \ref{path}). The connected components for deformation of
automorphisms of canonical models $(X,G,\alpha)$ are bigger than the connected components for deformation of
automorphisms of minimal models $(S,G,\alpha')$, the latter yielding locally closed sets of the moduli spaces which
are locally closed but not closed.

To describe these results we explain first the Gieseker coarse moduli space for canonical models of surfaces of general type,
which has the same underlying reduced space as  the coarse moduli stack for minimal models of surfaces of general type. We do not essentially talk about stacks (for which an elementary presentation can be found in \cite{fantechi}),
but we clarify how moduli spaces are obtained by glueing together Kuranishi spaces, and we show the fundamental difference for the \'etale
equivalence relation in the two respective cases of canonical and minimal models: we exhibit examples  showing that the relation is not finite (proper) in the case of minimal models (a fact which underlies the definition of Artin stacks given in \cite{artinstacks}).

We cannot completely settle here the question whether Teichm\"uller space is 
locally homeomorphic to Kuranishi space for all surfaces of general type, as this question is related to a fundamental question
about the non existence of complex automorphisms which are isotopic to the identity, but different from  the identity (see however 
the already mentioned theorem \ref{kur=teich-surf}).

Chapter five is dedicated to the connected components of moduli spaces, and to  the action of the absolute Galois group on the
set of irreducible components of the moduli space, and surveys many recent
results.

We end by discussing concrete issues showing how  one can determine a connected component of the moduli space by  resorting to topological or differential arguments; we overview  several results, without proofs but citing the references, and finally  we prove a new result, theorem \ref{doublecover}, obtained in collaboration with Ingrid Bauer.

There would have been many other interesting topics to treat, but these should probably better belong to a  `part 2' of the working  guide.

\section{Analytic moduli spaces and local moduli spaces: 
Teichm\"uller and Kuranishi space}

\subsection{Teichm\"uller space}
 Consider, throughout this subsection, an  oriented real differentiable manifold $M$ of real dimension $2n$
 (without loss of generality we may a posteriori assume $M$ and all the rest 
 to be $\sC^{\infty}$ or even $\sC^{\omega}$, i.e.,  real-analytic).
 
 At a later point it will be convenient to assume that $M$ is compact.
 
 Ehresmann (\cite{ACS}) defined an {\bf almost complex structure} on $M$ as the structure
 of a complex vector bundle on the real tangent bundle $TM_{\RR}$: namely, the action of 
 $\sqrt {-1}$ on $TM_{\RR}$ is provided by an endomorphism
 $$ J : TM_{\RR} \ra TM_{\RR}, {\rm{\  with  }}\  J^2 = - Id.$$

It is completely equivalent to give the decomposition of the complexified tangent bundle $TM_{\CC} : =  TM_{\RR} \otimes_{\RR}\CC$
as the direct sum of the $i$, respectively $-i$ eigenbundles:
$$  TM_{\CC}  =  TM^{1,0} \oplus  TM^{0,1} {\rm{ \  where  }} \  TM^{0,1} = \overline {TM^{1,0} }.$$

In view of the second condition, it suffices to give the subbundle $TM^{1,0} $, or, equivalently, a section
of the associated Grassmannian bundle $ \sG (n, TM_{\CC})$ whose fibre at a point $x \in M$
is the variety of $n$-dimensional vector subspaces of the complex tangent space at $x$, $TM_{\CC, x}$
(note that  the section must take values in the open set  $\mathcal T_n$ of subspaces $V$ such that $V$ and $\bar{V}$ generate).

The space $\sA \sC (M)$ of almost complex structures, once  $TM_{\RR}$ (hence all associated bundles) is endowed with a
Riemannian metric, has a countable number of seminorms (locally, the sup norm on a compact $K$
of all the derivatives of the  endomorphism $J$), and is therefore  a Fr\'echet space. One may for instance assume that $M$ is embedded in some $\RR^N$.

Assuming   that $M$ is compact, one can also consider the Sobolev k-norms (i.e., for derivatives up order k).

A closed subspace of $\sA \sC (M)$ consists of the set $ \sC (M)$ of complex structures: these are the almost complex structures for
which there are at each point $x$ local holomorphic coordinates, i.e., functions $z_1, \dots , z_n$ whose differentials
span the dual $(TM^{1,0}_y)^{\vee}$ of $TM^{1,0}_y$ for each point $y$ in a neighbourhood of $x$.

In general, the  splitting 
$$TM_{\CC}^{\vee} = (TM^{1,0})^{\vee} \oplus (TM^{0,1})^{\vee}$$ yields a decomposition of exterior differentiation
 of functions as
$ df = \partial f + \bar{\partial} f$, and a function is said to be holomorphic if its
differential is complex linear, i.e., $ \bar{\partial} f = 0$. 

This decomposition $ d= \partial  + \bar{\partial} $
extends to higher degree differential forms.

The theorem of Newlander-Nirenberg (\cite{NN}), first proven by Eckmann and Fr\"olicher in the real analytic case (\cite{E-F}, see 
also \cite{montecatini} for a simple proof)
characterizes the complex structures through an explicit equation:

\begin{theo}
{\bf (Newlander-Nirenberg)} An almost complex structure $J$ yields the structure of a complex manifold if and only if
it is integrable, which means $ \bar{\partial} ^2 = 0. $
\end{theo}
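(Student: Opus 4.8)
The plan is to treat the two implications separately: necessity of $\bar\partial^2=0$ is formal, while the converse is the substantial content. For necessity I would argue that if $J$ underlies a genuine complex manifold, then in local holomorphic coordinates $z_1,\dots,z_n$ the exterior derivative splits cleanly as $d=\partial+\bar\partial$, where $\partial$ raises the holomorphic and $\bar\partial$ the antiholomorphic degree by one; expanding $0=d^2=(\partial+\bar\partial)^2$ and separating by bidegree yields $\partial^2=0$, $\partial\bar\partial+\bar\partial\partial=0$ and in particular $\bar\partial^2=0$.

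Before attacking the converse I would recast integrability geometrically: $\bar\partial^2=0$ is equivalent to involutivity of the distribution $TM^{0,1}$ (equivalently $TM^{1,0}$) under the Lie bracket. Indeed, a $(1,0)$-form $\beta$ annihilates $(0,1)$-vectors, so for $(0,1)$-fields $\bar X,\bar Y$ one gets $d\beta(\bar X,\bar Y)=-\beta([\bar X,\bar Y])$; hence the $(0,2)$-component of $d$ acting on $(1,0)$-forms detects exactly the $(1,0)$-part of the brackets $[\bar X,\bar Y]$, i.e. the failure of $[TM^{0,1},TM^{0,1}]\subseteq TM^{0,1}$. Since $\bar\partial^2 f$ is, up to sign, the $(0,2)$-component of $d(\partial f)$, this identifies $\bar\partial^2=0$ with involutivity of $TM^{0,1}$. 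Producing local holomorphic coordinates then becomes the problem of solving the overdetermined system $\bar\partial z=0$, i.e. finding $n$ functions annihilated by a local frame $L_1,\dots,L_n$ of $TM^{0,1}$ whose differentials are independent.

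In the real-analytic category --- to which the excerpt allows us to reduce a posteriori --- I would solve this system through the holomorphic Frobenius theorem by complexification. In real-analytic local coordinates the frame $L_1,\dots,L_n$ has real-analytic coefficients and involutivity reads $[L_i,L_j]=\sum_k c_{ij}^k L_k$. Regarding the $2n$ real coordinates as the real locus of $2n$ complex variables, these coefficients extend holomorphically to a neighbourhood in $\CC^{2n}$, so the $L_j$ extend to holomorphic vector fields $\tilde L_j$ spanning a rank-$n$ holomorphic distribution $\mathcal D$ that remains involutive by analytic continuation. Holomorphic Frobenius then supplies holomorphic first integrals $\zeta_1,\dots,\zeta_n$ with independent differentials and constant along the leaves, i.e. $\tilde L_j\zeta_k=0$. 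Restricting to the real locus gives $z_k:=\zeta_k|_M$ with $L_j z_k=0$, that is $\bar\partial z_k=0$; and under the canonical identification of $TM_{\CC}$ with the holomorphic tangent space of the complexification along $M$ --- taking $\mathcal D$ to $TM^{0,1}$ --- the basis $d\zeta_k$ of the annihilator of $\mathcal D$ restricts to a basis $dz_k$ of $(TM^{1,0})^{\vee}$, so the $z_k$ are the sought holomorphic coordinates.

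The main obstacle is the step outside the real-analytic world. In the merely $\sC^{\infty}$ case one cannot complexify, the holomorphic Frobenius argument collapses, and solving $\bar\partial z=0$ requires genuine analysis --- elliptic a priori estimates for $\bar\partial$ in Sobolev norms coupled with a Newton-type iteration --- which is the heart of the original proof of Newlander and Nirenberg (\cite{NN}); the real-analytic statement is the earlier and softer result of Eckmann and Fr\"olicher (\cite{E-F}, \cite{montecatini}). I would therefore present the Frobenius argument as the transparent proof in the $\sC^{\omega}$ case and merely indicate the additional analytic machinery needed for the smooth case, noting that the ``a posteriori'' reduction to real-analytic data is justified only once the theorem itself has been established.
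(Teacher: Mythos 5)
The paper does not prove this theorem at all: it is quoted as a classical result, with the original analytic proof attributed to \cite{NN}, the real-analytic case to Eckmann--Fr\"olicher \cite{E-F}, and a ``simple proof'' delegated to \cite{montecatini}. So there is no in-paper argument to compare against; your proposal supplies a proof sketch where the paper supplies citations. Judged on its own terms, your sketch is the standard and correct one. The necessity direction via the bidegree splitting of $d^2=0$ is fine. The identification of $\bar{\partial}^2=0$ on functions with involutivity of $TM^{0,1}$ is also correct: since $d(\partial f)=-d(\bar{\partial}f)$, the $(0,2)$-part of $\bar\partial\bar\partial f$ is $-(d(\partial f))^{(0,2)}$, and your Cartan-formula computation $d\beta(\bar X,\bar Y)=-\beta([\bar X,\bar Y])$ for $(1,0)$-forms $\beta$ shows this detects exactly the $(1,0)$-component of brackets of $(0,1)$-fields; since the $\partial f$ span $(TM^{1,0})^{\vee}$ pointwise, the equivalence follows. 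The complexification plus holomorphic Frobenius argument in the $\sC^{\omega}$ category is precisely the Eckmann--Fr\"olicher route the paper cites, and you are right to flag that the paper's ``a posteriori'' reduction to real-analytic data cannot be invoked before the theorem is proved. The one honest caveat, which you yourself state, is that for a general (merely smooth) almost complex structure your argument proves nothing: the elliptic estimates and iteration scheme of \cite{NN} are the actual content of the theorem as stated, and you only gesture at them. As a self-contained proof of the statement your proposal is therefore incomplete in the $\sC^{\infty}$ case; as a proof of the real-analytic case together with an accurate map of where the remaining difficulty lies, it is correct and matches the literature the paper points to.
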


Obviously the group of oriented diffeomorphisms of $M$ acts on the space of complex structures, hence one can
define in few words some basic concepts.

\begin{defin}
Let $\sD iff ^+ (M)$ be the group of orientation preserving diffeomorphisms of $M$ , and let $\sC (M)$ the space of complex structures
on $M$. Let $\sD iff ^0 (M) \subset \sD iff ^+ (M)$ be the connected component of the identity, 
the so called subgroup of diffeomorphisms which are isotopic
to the identity.

Then Dehn  (\cite{dehn}) defined the mapping class group of $M$ as 
$$\sM ap (M) : =  \sD iff ^+ (M) /  \sD iff ^0 (M),$$
while the Teichm\"uller space of $M$, respectively the moduli space of complex structures on $M$  are defined as
$$ \sT (M) : = \sC (M) / \sD iff ^0 (M) , \ \mathfrak M (M)  : = \sC (M) / \sD iff ^+ (M).$$
\end{defin}

These definitions are very clear,  however they only show that these objects are topological spaces,
and that 

$$ (*) \   \mathfrak M (M) =  \sT (M)  /  \sM ap (M) .$$

The simplest examples here are two: complex tori and compact complex curves.

The  example of complex tori sheds light on the important  question concerning  the determination of the connected
components of $\sC (M)$, which are called the
deformation classes
in the large of  the complex structures on $M$ (cf.
\cite{cat02}, \cite{cat04}).

Complex tori are 
 parametrized  by an open set $\mathcal T_n$ of the complex
Grassmann Manifold $Gr(n,2n)$, image of the open set of matrices

$\{ \Omega \in Mat(2n,n; \CC) \ | \ (i)^n det  (\Omega \overline
{\Omega}) > 0 \}.$

This parametrization is very explicit: if we consider
a fixed lattice $ \Ga \cong \ZZ^{2n}$,
to each matrix $ \Omega $ as above we associate the subspace 
$$ V = ( \Omega )
(\CC^{n}),$$ so that
$ V \in Gr(n,2n)$ and $\Ga \otimes \CC \cong V \oplus \bar{V}.$

Finally, to $ \Omega $ we associate the torus $Y_V : = V / p_V (\Ga)$,
$p_V : V \oplus
\bar{V} \ra V$ being the projection onto the first addendum.

Not only we obtain in this way a connected open set
inducing  all the small deformations (cf.
\cite{k-m71}), but indeed, as  it was shown in \cite{cat02} (cf. also
\cite{cat04}) $\mathcal T_n$ is
a connected component of  Teichm\"uller space (as the letter $\mathcal T$ suggests).

It was observed however by Kodaira and Spencer already in their first article
(\cite {k-s58}, and volume II of Kodaira's
collected works)
that for $n \geq 2$  the mapping class group $ SL ( 2n, \ZZ)  $ does not act properly
discontinuously
on $\mathcal T_n$. More precisely, they show that for every non empty
open set $U \subset \mathcal T_n$ there is a point $t$ such that
the orbit  $ SL ( 2n, \ZZ)  \cdot  t$ intersects $U$ in an infinite set.

This shows that the quotient is not Hausdorff at each point, probably it is not even
a non separated  complex space.

Hence the moral is that 
for compact complex manifolds
it is better to consider, rather than the Moduli space, the
Teichm\"uller space.

Moreover, after some initial constructions
by Blanchard and Calabi (cf. \cite{bla53},  \cite{bla54}, , \cite{bla56},
\cite{cal58}) of non K\"ahler
complex structures
$X$ on manifolds diffeomorphic to a product $C \times T$, where $C$ is a
compact complex curve and $T$ is a 2-dimensional complex torus,
Sommese generalized their constructions, obtaining  (\cite{somm75}) that  the space of complex
structures on a  six dimensional real torus is not connected.

These examples were then generalized in
\cite{cat02} \cite{cat04} under the name of {\bf Blanchard-Calabi manifolds}
showing (corollary 7.8 of \cite{cat04}) that also
the space of complex structures on the product of a curve $C$
of genus $g \geq 2$ with a four
dimensional real torus is not connected, and that there is no upper bound for the dimension of
Teichm\"uller space (even when $M$ is fixed).

\bigskip

The case of compact complex curves $C$ is instead the one  which was originally 
considered by Teichm\"uller.

In this case,  if the genus $g$ is at least $2$, the
Teichm\"uller space $\sT_g$  is a bounded domain,   diffeomorphic to a ball,
contained in the vector space of quadratic differentials 
$H^0 ( C, \hol_C ( 2 K_C))$ on a fixed such curve $C$.

In fact, for each other complex structure on the oriented 2-manifold $M$ underlying 
$C$ we obtain a complex curve $C'$, and there is a unique extremal quasi-conformal
map $ f : C \ra C'$, i.e., a map such that the Beltrami distortion  $\mu_f : = \bar{\partial} f / \partial f$ 
has minimal norm (see for instance  \cite{hubbard} or \cite{ar-cor}). 

The fact that  the Teichm\"uller space $\sT_g$  is homeomorphic to a ball (see \cite{Tro} for a simple proof)
is responsible for the fact that the moduli space of curves $\mathfrak M_g$ is close to be a classifying 
space for the mapping class group (see \cite{mumshaf} and the articles by Edidin and Wahl in this Handbook).

\subsection{Kuranishi space}

Interpreting  the Beltrami distortion as a closed $(0,1)$- form with values in the dual  $(TC^{1,0})$ of the
cotangent bundle $(TC^{1,0})^{\vee}$, we obtain a particular case of the Kodaira-Spencer-Kuranishi
theory of local deformations.

In fact, by Dolbeault 's theorem, such a closed form determines a cohomology
class  in $H^1 ( \Theta_C)$, where $ \Theta_C$ is the sheaf of holomorphic 
sections of the holomorphic tangent bundle  $(TC^{1,0})$: these cohomology classes
are interpreted, in the  Kodaira-Spencer-Kuranishi theory, as infinitesimal deformations (or derivatives of a family of deformations) of a complex structure: let us review briefly how.

Local deformation theory addresses precisely  the study of the {\bf small
deformations} of a complex manifold $ Y = (M, J_0)$.

We shall use here unambiguously the double notation $TM^{0,1} = TY^{0,1} ,\  TM^{1,0} = TY^{1,0} $ to refer  to the 
splitting determined by the complex structure $J_0$. 

$J_0$ is a point in $\sC (M)$, and  a neighbourhood in the space of almost complex structures
corresponds to a distribution of subspaces which are globally defined as  graphs of an endomorphism
$$ \phi :  TM^{0,1} \ra TM^{1,0},$$
called a {\bf small variation of complex structure}, since one then defines 
 $$TM^{0,1}_{\phi} : = \{ (u, \phi (u))| \ u \in TM^{0,1} \} \subset TM^{0,1} \oplus  TM^{1,0}.$$
 
 In terms of the new $\bar{\partial}$ operator, the new one is simply obtained by considering
 $$\bar{\partial}_{\phi} : =  \bar{\partial} + \phi, $$
 and the integrability condition is given by the Maurer-Cartan equation
 
$$ (MC) \ \ \bar{\partial} (\phi) + \frac{1}{2} [ \phi, \phi ] = 0,$$
where $[ \phi, \phi ] $ denotes the Schouten  bracket, which is the composition of  exterior product of forms followed by Lie bracket of vector fields,
and  which is graded commutative.

Observe for later use that the form $  F(\phi) : = ( \bar{\partial} (\phi) + \frac{1}{2} [ \phi, \phi ] )$ is $\bar{\partial} $ closed,
if  $ \bar{\partial} (\phi)=0 $, since  then
$$ \bar{\partial}  F(\phi)  =  \frac{1}{2}  \bar{\partial} [ \phi, \phi ] =  \frac{1}{2}( [ \bar{\partial}  \phi, \phi ] + [ \phi, \bar{\partial}  \phi ])= 0.$$

Recall also the theorem of Dolbeault: if $\Theta_Y$  is the sheaf of holomorphic sections of $TM^{1,0}$, then $H^j (\Theta_Y)$
is isomorphic to the quotient space $\frac{Ker(\bar{\partial} ) }{Im (\bar{\partial} )}$ of the space of  $\bar{\partial} $ closed  $(0,j)$-forms with values in $TM^{1,0}$
modulo the space of $\bar{\partial} $-exact  $(0,j)$-forms with values in $TM^{1,0}$.

Our  $F$ is a map of degree 2 between two infinite dimensional spaces, the space of $(0,1)$-forms with values in the bundle $TM^{1,0}$,
and the space of    $(0,2)$-forms with values in $TM^{1,0}$.

Observe that, since our original complex structure $J_0$ corresponds to $\phi = 0$, the derivative $DF$
of the above equation $F$ at $\phi = 0$ is simply
$$ \bar{\partial} (\phi)= 0,$$
hence the tangent space to the space of complex structures consists of the space of $ \bar{\partial} $-closed
forms of type $(0,1)$ and with values in the bundle $ TM^{1,0}$. Moreover the derivative of $F$ surjects onto
the space of $\bar{\partial} $-exact  $(0,2)$-forms with values in $TM^{1,0}$.

We are  now going to show why we can restrict our consideration only to the class of such forms $\phi$ in the
Dolbeault cohomology group  $$H^1 (\Theta_Y): =  ker ( \bar{\partial} ) / Im ( \bar{\partial})  .$$

This is done by answering the question: how does the group of diffeomorphisms act on an almost complex structure $J$?

This is in general difficult to specify, but we can consider the infinitesimal action of a 1-parameter group
of diffeomorphisms $$\{ \psi_t : = exp ( t (\theta + \bar{\theta} ) | t \in \RR \},$$ corresponding to a differentiable vector field
$\theta$ with values in $ TM^{1,0}$ ; from now on, we shall assume
that $M$ is compact, hence the diffeomorphism $\psi_t$ is defined $\forall t \in \RR$.

We refer to \cite{kur3} and  \cite{huy}, lemma 6.1.4 , page 260, for the following calculation of the Lie derivative:

\begin{lemma}
Given a 1-parameter group
of diffeomorphisms

 $\{ \psi_t : = exp ( t (\theta + \bar{\theta} ) | t \in \RR \}$,
$(\frac{d}{dt})_{t=0} (\psi_t ^* (J_0)) $ corresponds to the small variation $ \bar{\partial} (\theta)$.
\end{lemma}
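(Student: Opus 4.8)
The plan is to recognize the derivative $(\frac{d}{dt})_{t=0}(\psi_t^*(J_0))$ as the Lie derivative $\sL_{\theta + \bar\theta}(J_0)$ of the endomorphism field $J_0$ along the real vector field $X := \theta + \bar\theta$, and then to compute this Lie derivative explicitly, the essential input being the integrability of $J_0$. Since $\psi_t$ is the flow of $X$, the identity $(\frac{d}{dt})_{t=0}(\psi_t^*(J_0)) = \sL_X(J_0)$ is the definition of the Lie derivative, and the task reduces to an algebraic computation with brackets.

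First I would write down the standard formula for the Lie derivative of a $(1,1)$-tensor,
$$(\sL_X J_0)(Y) = [X, J_0 Y] - J_0 [X, Y],$$
valid for every vector field $Y$. Because a small variation of complex structure is recorded by its component $TM^{0,1} \to TM^{1,0}$, I would evaluate this on an arbitrary $\bar u \in TM^{0,1}$, using $J_0 \bar u = -i\bar u$. Decomposing $[X, \bar u]$ into its $(1,0)$ and $(0,1)$ parts and applying $J_0$, the $(0,1)$ contributions cancel and one is left with
$$(\sL_X J_0)(\bar u) = -2i\,([X, \bar u])^{1,0},$$
where the superscript denotes projection onto $TM^{1,0}$.

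Next I would split $X = \theta + \bar\theta$ and invoke the Newlander--Nirenberg integrability of $J_0$: since $\bar\theta$ and $\bar u$ both lie in $TM^{0,1}$, integrability gives $[\bar\theta, \bar u] \in TM^{0,1}$, so its $(1,0)$ part vanishes and only $([\theta, \bar u])^{1,0}$ survives. The conceptual step is then to identify this projected bracket with the Dolbeault operator on the holomorphic tangent sheaf $\Theta_Y$: by the very definition of the $\bar\partial$-operator on smooth sections of $TM^{1,0}$ one has $([\theta, \bar u])^{1,0} = (\bar\partial \theta)(\bar u)$, up to the fixed sign convention, so that $(\sL_X J_0)(\bar u) = -2i\,(\bar\partial\theta)(\bar u)$.

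Finally I would compare normalizations in order to pass from the variation of $J$ to the variation $\phi$. Differentiating the description $TM^{0,1}_{\phi} = \{(u, \phi(u)) \mid u \in TM^{0,1}\}$ of the deformed $(0,1)$-bundle, with $\phi = t\eta$, and using $J_0(\eta u) = i\,\eta u$ for $\eta u \in TM^{1,0}$, shows that the derivative $\dot J$ of the endomorphism and the infinitesimal variation $\dot\phi$ are related by exactly $\dot J|_{TM^{0,1}} = -2i\,\dot\phi$. The two factors of $-2i$ cancel, yielding that $(\frac{d}{dt})_{t=0}(\psi_t^*(J_0))$ corresponds to the small variation $\bar\partial(\theta)$, as claimed. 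The main obstacle is bookkeeping: tracking the factor $-2i$ and the sign in the bracket-versus-$\bar\partial$ identification so that the matching $\dot J \leftrightarrow \dot\phi$ comes out consistently; the geometric content, namely the Lie-derivative formula together with integrability, is otherwise routine.
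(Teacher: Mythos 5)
The paper gives no proof of this lemma, deferring instead to Kuranishi and to Huybrechts (Lemma 6.1.4); your computation is precisely the standard argument found there, and it checks out: the identity $(\sL_X J_0)(\bar u)=-2i([X,\bar u])^{1,0}$, the vanishing of $[\bar\theta,\bar u]^{1,0}$ by involutivity of $TM^{0,1}$, the identification of $[\theta,\bar u]^{1,0}$ with $(\bar\partial\theta)(\bar u)$, and the normalization $\dot J|_{TM^{0,1}}=-2i\,\dot\phi$ are all correct, and the two factors of $-2i$ cancel as you say. This is the same approach as the cited source, so nothing further is needed beyond fixing once and for all the sign convention in $[\theta,\bar u]^{1,0}=\pm(\bar\partial\theta)(\bar u)$, which you already flag and which is immaterial for the Dolbeault class.
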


The lemma says, roughly speaking, that locally at each point $J$ the orbit for the group of diffeomorphisms
in $\sD iff^0(M)$  contains a submanifold, having as tangent space the forms in the same  Dolbeault cohomology 
class of $0$, which has  finite codimension inside   another submanifold
with tangent space the space of  $ \bar{\partial} $-closed
forms $\phi$. Hence the tangent space to the orbit space is the space of such Dolbeault cohomology classes.

Even if we `heuristically' assume  $ \bar{\partial} (\phi)= 0,$ it looks like we are still left with another equation with values
in an infinite dimensional space. However, the derivative $DF$ surjects onto the space of exact forms, while the restriction
of $F$ to the subspace  of $ \bar{\partial}$-closed forms ($\{ \bar{\partial} (\phi)= 0\}$ takes values in
the space of $ \bar{\partial} $-closed forms: this is the moral reason why indeed one can reduce the above equation $ F=0$, associated to a map
between infinite dimensional spaces, to an equation $ k=0$ for a map $k : H^1 (\Theta_Y) \ra H^2 (\Theta_Y)$,
called the Kuranishi map.

This is done explicitly via a miracolous equation (see \cite{k-m71}, \cite{Kodbook},\cite{kur4} and  \cite{montecatini} for details)  set up by Kuranishi
in order to reduce the problem to a finite dimensional one (here Kuranishi,  see \cite{kur3}, uses 
the Sobolev r- norm in order to be able to use the implicit function theorem for Banach spaces).

Here is how the Kuranishi equation is set up.

Let $\eta_1, \dots , \eta_m \in H^1 (\Theta_Y)$ be a basis for the space of harmonic (0,1)-forms with values in 
$TM^{1,0}$, and set $t : =(t_1, \dots, t_m) \in \CC^m$, so that  $ t \mapsto \sum_i t_i \eta_i$ 
establishes an isomorphism $\CC^m \cong H^1 (\Theta_Y)$. 

Then the {\em Kuranishi slice} (see \cite{palais} for a general theory of slices) is obtained
by associating to $t$ the unique power series solution of the following equation:
$$ \phi (t) =   \sum_i t_i \eta_i + \frac{1}{2} \bar{\partial}^* G [ \phi (t), \phi (t)] , $$
satisfying moreover  $\phi (t) =   \sum_i t_i \eta_i + $ higher order terms
($G$ denotes here  the Green operator).  

The upshot is that for these forms
the integrability equation simplifies drastically; the result is summarized in the following definition.

\begin{defin}
The  Kuranishi space $\frak B (Y)$ is defined as the germ of complex subspace of $H^1 (\Theta_Y)$
defined by
$\{ t \in \CC^m | \  H  [ \phi (t), \phi (t)] = 0 \} $, where $H$ is the harmonic projector onto the space 
$ H^2 (\Theta_Y)$ of harmonic forms of type $(0,2)$ and with values in $TM^{1,0}$.

 Kuranishi space $\frak B (Y)$ parametrizes  exactly the set of small variations of complex structure $ \phi (t)$
which are integrable. Hence over $\frak B (Y)$ we have a family of complex
structures which deform the complex structure of $Y$.

\end{defin}

It follows from the above arguments that the  Kuranishi space $\frak B (Y)$
surjects onto the germ of
the  Teichm\"uller space at the point corresponding to the  given
complex structure $Y = (M, J_0)$.

It fails badly to be a homeomorphism, and my favourite example for this is (see \cite{catrav}) the
one of the Segre ruled surfaces $\FF_n$,  obtained as the blow up at the origin of 
the projective cone over a rational normal curve of degree $n$, and
 described  by Hirzebruch biregularly as $\PP ( \hol_{\PP^1} \oplus \hol_{\PP^1} (n)), n \geq 0.$

Kuranishi space is here the vector space 
$$  H^1 (\Theta_{\FF_n}) \cong {\rm Ext}^1( \hol_{\PP^1}(n),  \hol_{\PP^1} )$$
parametrizing projectivizations $\PP (E)$, where the rank 2 bundle $E$
occurs  as an extension
$$ 0 \ra  \hol_{\PP^1} \ra E \ra  \hol_{\PP^1} (n)\ra 0. $$

By Grothendieck's theorem, however, $E$ is a direct sum of two line bundles,
hence we get as a possible surface only a surface $\FF_{n-2k}$,
for each $ k \leq \frac{n}{2}$. Indeed Teichm\"uller space, in a neighbourhood of the
point corresponding to $\FF_n$ consists just of a finite number of points
corresponding to each  $\FF_{n-2k}$, and where  $\FF_{n-2k}$ is in the closure
of  $\FF_{n-2h}$ if and only if $k \leq h$.

The reason for this phenomenon is the following. Recall that the form
$\phi$ can be infinitesimally changed by adding   $ \bar{\partial} (\theta)$;
now, for  $\phi = 0$, nothing is changed if $ \bar{\partial} (\theta)=0$.
i.e., if $\theta \in H^0 (\Theta_Y)$ is a holomorphic vector field. 
But the exponentials of these vector fields, which are holomorphic on
$ Y = \FF_n$, but not necessarily for $\FF_{n-2k}$, act transitively
on each   stratum of the stratification of ${\rm Ext}^1( \hol_{\PP^1}(n),  \hol_{\PP^1} )$ given by isomorphism type (each stratum is thus the set of 
 surfaces isomorphic to $\FF_{n-2k}$).

In other words, the jumping of the dimension of $H^0 (\Theta_{Y_t})$
for $t \in \frak B (Y)$ is responsible for the phenomenon.

Indeed Kuranishi, improving on a result of Wavrik (\cite{Wav}) obtained 
in \cite{kur3} the following result.

\begin{theo} {\bf( Kuranishi's third theorem)}\label{kur3}
Assume that   the dimension of $H^0 (\Theta_{Y_t})$
for $t \in \frak B (Y)$ is a constant function  in a neighbourhood of $0$.

Then there is $k > > 0$ and a neighbourhood $\frak U$ of the identity map
in the group $\sD iff (M)$, with respect to the k-th Sobolev norm,
and a neighbourhood $U$ of $0$ in $ \frak B (Y)$ 
such that, for each  $f \in \frak U$, and $t \neq t' \in U$, 
$f$ cannot yield a holomorphic map between $Y_t$ and $Y_{t'}$.
\end {theo}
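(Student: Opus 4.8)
The plan is to read the statement as the assertion that, under the constant-dimension hypothesis, the Kuranishi slice is a genuine \emph{local slice} for the action of the identity component $\sD iff^0 (M)$ on the space $\sC(M)$ of complex structures, and that this hypothesis is exactly what upgrades the versality of the Kuranishi family to \emph{universality} near $0$. Concretely, a diffeomorphism $f$ yielding a biholomorphism $Y_t \to Y_{t'}$ is the same as $f$ carrying the small variation $\phi(t)$ to $\phi(t')$ under the natural action of $\sD iff (M)$: if $f : Y_t \to Y_{t'}$ is holomorphic then $f^{*}\phi(t') = \phi(t)$. Since any $f$ close to the identity in a high Sobolev norm lies in $\sD iff^0 (M)$, the task is to show that such an $f$ together with $t,t'$ close to $0$ forces $t = t'$; this is precisely the local injectivity of $\frak B(Y) \to \sT(M)$ at $0$.

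First I would pass to Sobolev-$k$ completions, with $k$ large enough that Sobolev-$k$ maps are $C^1$ and form a Banach manifold of diffeomorphisms, and consider the Banach-analytic map $\Psi(f,s) = f^{*}\phi(s)$ from $\frak U \times U$ into the Banach space of $(0,1)$-forms with values in $TM^{1,0}$. The forbidden situation gives $\Psi(f,t') = \phi(t) = \Psi(\mathrm{id},t)$, i.e. two preimages of $\phi(t)$ differing in the base coordinate, and the whole point is to contradict local injectivity modulo isotropy. Next I would compute the differential of $\Psi$ at $(\mathrm{id},0)$. Differentiating in $s$ reproduces the harmonic representatives spanning $H^1 (\Theta_Y)$, since $\phi(s) = \sum_i s_i \eta_i + (\text{higher order})$; differentiating in $f = \exp(\tau(\theta + \bar\theta))$ reproduces $\bar\partial \theta$ by the Lie-derivative Lemma. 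By the Hodge decomposition these two subspaces $H^1(\Theta_Y)$ and $\mathrm{Im}(\bar\partial)$ meet only in $0$, so $d\Psi_{(\mathrm{id},0)}$ is injective exactly modulo its kernel in the $\theta$-variable, namely $\{\theta : \bar\partial \theta = 0\} = H^0(\Theta_Y)$.

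The structural input is that the orbit map at a nearby parameter $t$ has differential $\theta \mapsto \bar\partial_t \theta$, with kernel the holomorphic vector fields of $Y_t$; hence its rank is $\dim \Gamma(TM^{1,0})_k - \dim H^0(\Theta_{Y_t})$. The hypothesis that $\dim H^0(\Theta_{Y_t})$ is locally constant is precisely the statement that this rank is locally constant. With closed, complemented kernels and images supplied by elliptic theory (finite-dimensionality of the relevant Dolbeault cohomology and the Hodge splitting), I would then invoke the constant-rank, or slice, theorem in Banach spaces: after shrinking $\frak U$ and $U$, the map $\Psi$ exhibits the Kuranishi slice as a true slice, so that $\Psi(f,t') = \Psi(\mathrm{id},t)$ with $f$ near the identity forces $t = t'$ (and $f$ to lie in the isotropy, i.e. to be an automorphism of $Y_t$). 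This contradicts $t \neq t'$.

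The main obstacle lives in the infinite-dimensional analysis, not in the transversality bookkeeping, and is twofold. First, the constant-rank theorem in the Sobolev setting is delicate: one must check that the orbit-direction image $\mathrm{Im}(\bar\partial_t)$ varies as a complemented subspace of locally constant codimension, and this is exactly where the locally constant value of $\dim H^0(\Theta_{Y_t})$ is indispensable; without it the rank jumps and the slice degenerates, as the Hirzebruch surfaces $\FF_n$ already show. Second, one must reconcile the Sobolev and the smooth pictures through elliptic regularity, so that a genuine holomorphic $f$ is seen to be smooth and the implicit-function-theorem solutions are honest diffeomorphisms and structures; this is what makes closeness in the $k$-th Sobolev norm the correct notion in the conclusion. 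Controlling these two analytic points carefully is where the real work lies.
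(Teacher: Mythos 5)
First, a point of comparison: the paper does not actually prove this statement --- it is quoted from Kuranishi's 1969 note \cite{kur3} (improving Wavrik), and the only surrounding material is the discussion of the Segre--Hirzebruch surfaces $\FF_n$ as the motivating example of what fails when $h^0(\Theta_{Y_t})$ jumps. So your proposal can only be measured against the standard argument; at that level your framework is the right one: encode a holomorphic $f \colon Y_t \ra Y_{t'}$ near the identity as $f^{*}\phi(t') = \phi(t)$, pass to Sobolev completions, and show that the Kuranishi slice is a genuine slice for the $\sD iff ^0(M)$-action near $0$.

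The genuine gap is in the constant-rank step. You assert that the hypothesis ``$\dim H^0(\Theta_{Y_t})$ locally constant'' is \emph{precisely} the local constancy of the rank of the orbit map $\theta \mapsto \bar{\partial}_t \theta$, and you then apply a constant-rank/slice theorem to $\Psi(f,s) = f^{*}\phi(s)$. But the map whose rank must be controlled is $\Psi$, not the orbit map alone, and its differential at a nearby point $(\mathrm{id},s)$ is $(\theta,\sigma) \mapsto \bar{\partial}_s \theta + D\phi(s)\sigma$. Its kernel is $H^0(\Theta_{Y_s}) \oplus \ker \rho_s$, where $\rho_s$ is the Kodaira--Spencer map of the Kuranishi family at $s$: any first-order trivial direction $\sigma \neq 0$ at $s$ (i.e.\ $D\phi(s)\sigma \in \mathrm{Im}\,\bar{\partial}_s$) enlarges the kernel and is exactly what would make the fibres of $\Psi$ fail to be vertical, i.e.\ the slice fail to be transverse to the orbits away from $0$. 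Constancy of $h^0$ says nothing \emph{directly} about $\ker \rho_s$: the Kuranishi family is only known to be versal, not semi-universal, at nearby points, and $h^1(\Theta_{Y_s})$ may a priori drop. This is precisely where the real content of Kuranishi's argument sits, and your proposal asserts it rather than proves it. The gap can be closed, for instance by semi-Fredholm stability: $\bar{\partial}_0 \oplus D\phi(0)$ has finite-dimensional kernel $H^0(\Theta_{Y_0})$ and closed range (Hodge theory), so $\dim \ker (\bar{\partial}_s \oplus D\phi(s)) \leq h^0(\Theta_{Y_0})$ for small $s$; combined with the trivial lower bound $h^0(\Theta_{Y_s})$ and the hypothesis this forces $\ker \rho_s = 0$ and constant kernel dimension. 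Even after that, to conclude $t = t'$ you must still observe that the $\Psi$-fibre through $(\mathrm{id},t)$ is connected with everywhere-vertical tangent spaces --- a small but necessary addendum to ``the slice theorem applies''.
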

 
Kuranishi's theorem (\cite{kur1},\cite{kur2}) shows that 
Teichm\"uller space can be viewed as being locally dominated by a 
complex space  of locally finite dimension
(its dimension, as we already observed,  may however be unbounded, cf.
cor. 7.7 of
\cite{cat04}).

A first consequence is that Teichm\"uller space  is locally connected by holomorphic arcs,
hence the determination of the connected components of $\sC (M)$, respectively 
 of $\sT (M)$, can be done using the original definition of 
deformation equivalence, given by Kodaira and Spencer in \cite{k-s58}.

\begin{cor}
Let $Y = (M,J)$, $Y' = (M,J')$ be two different complex structures on $M$.

Define  deformation equivalence as the equivalence relation generated by direct
deformation equivalence, where $Y$, $Y'$ are said to be {\bf direct disk deformation equivalent} if
and only if there is a proper holomorphic submersion with connected fibres $f \colon \sY \ra \Delta$,
where $\sY$ is a complex manifold, $\De \subset \CC$ is the unit disk,
and moreover there are two fibres of $f$ biholomorphic to $Y$, respectively  $ Y'$. 

Then two complex structures on $M$ yield points in the same connected component of $\sT (M)$ 
if and only if they are in the same deformation equivalence class.
\end{cor}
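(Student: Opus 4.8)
The plan is to prove the two implications separately, using Ehresmann's fibration theorem for the implication ``deformation equivalent $\Rightarrow$ same component'' and the Kuranishi theory recalled above --- in the form that $\sT(M)$ is locally connected by holomorphic arcs --- for the reverse implication.

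For the direction ``deformation equivalent $\Rightarrow$ same connected component of $\sT(M)$'', since lying in one connected component is itself an equivalence relation, it suffices to treat a single direct disk deformation equivalence $f\colon\sY\to\Delta$. I would first invoke Ehresmann's theorem: a proper holomorphic submersion is a differentiable fibre bundle, and since $\Delta$ is contractible this bundle is $\sC^\infty$-trivial, $\sY\cong M\times\Delta$ over $\Delta$. Transporting the complex structure of each fibre $\sY_t$ back to $M$ through such a trivialization produces a continuous arc $t\mapsto J_t$ in $\sC(M)$; by continuity and connectedness of $\Delta$ its two endpoints lie in one connected component of $\sC(M)$, hence, under the continuous projection $\sC(M)\to\sT(M)$, in one connected component of $\sT(M)$. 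I would normalise the trivialization on the fibre biholomorphic to $Y$ so that it restricts there to the given biholomorphism, so that $J_{t_0}=J$ exactly and the arc starts at $[J]$. Here one notes that over the simply connected $\Delta$ the trivialization is unique up to a gauge transformation $\Delta\to\sD iff^+(M)$ which, normalised to the identity at $t_0$, takes values in the identity component $\sD iff^0(M)$; hence the induced arc in $\sT(M)$ is canonical and its other endpoint is the class of the fibre biholomorphic to $Y'$.

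For the reverse direction I would use that the Kuranishi space $\frak B(Y)$ is a finite-dimensional complex space surjecting onto the germ of $\sT(M)$ at $[Y]$, whence (as recorded just above) $\sT(M)$ is locally connected by holomorphic arcs. Consequently the relation ``joinable by a finite chain of holomorphic arcs'' has open equivalence classes, which are therefore also closed and hence coincide with the connected components. Thus if $[J]$ and $[J']$ lie in one component they are joined by a chain $[J]=[J^{(0)}],\dots,[J^{(k)}]=[J']$ of holomorphic arcs. I would realise each link by restricting the Kuranishi family to a holomorphic disk $\Delta_i\to\frak B(Y^{(i)})$ projecting onto the $i$-th arc: this yields a proper holomorphic submersion over $\Delta_i$ whose two distinguished fibres have Teichm\"uller classes $[J^{(i)}]$ and $[J^{(i+1)}]$, hence are biholomorphic (through $\sD iff^0(M)$) to $(M,J^{(i)})$ and $(M,J^{(i+1)})$. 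Each link is therefore a direct disk deformation equivalence, and chaining them gives the deformation equivalence of $J$ and $J'$.

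I expect the \emph{main obstacle} to be exactly this reverse direction, more precisely the passage from topological to holomorphic connectivity: a continuous path in $\sC(M)$ is not a holomorphic family and cannot by itself witness a deformation equivalence, so one genuinely needs Kuranishi's theorem to replace it by a chain of honest holomorphic families over disks. The only other delicate point is the bookkeeping of markings in the easy direction --- the distinguished fibres are a priori only biholomorphic to $Y$ and $Y'$, so one must use the normalisation of the Ehresmann trivialization above to pin the arc's endpoints to the correct classes in $\sT(M)$ rather than to their $\sM ap(M)$-translates.
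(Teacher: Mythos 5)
Your proof is correct and follows essentially the same route the paper intends: the paper gives no separate proof but derives the corollary directly from the remark that Kuranishi's theorem makes $\sT(M)$ locally dominated by a finite-dimensional complex space, hence locally connected by holomorphic arcs, which is exactly your open-closed argument for the hard direction, with Ehresmann's theorem handling the easy direction. Your closing caveat about pinning the endpoints to the correct $\sT(M)$-classes rather than their $\sM ap(M)$-translates is a real subtlety that the paper itself glosses over, so you are not missing anything the paper supplies.
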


In the next subsections we shall illustrate the meaning of the condition that the vector spaces $H^0 (\Theta_{Y_t})$
have locally constant dimension, in terms of deformation theory.
Moreover, we shall give criteria implying that Kuranishi and Teichm\"uller space do locally coincide.

\subsection{Deformation theory and how  it is used}

One can define deformations not only for complex manifolds, but also for complex spaces.
 The technical assumption of flatness replaces then the condition that $\pi$ be a submersion.

\begin{defin}
1) A {\bf deformation}  of a compact complex space $X$ is a pair
consisting of

1.1) a flat proper morphism $ \pi : \X \ra T$ between connected complex spaces (i.e.,
$\pi^* : \hol_{T,t} \ra \hol_{\X,x}$ is a flat ring extension for each
$ x$ with $ \pi (x) = t$)

1.2) an isomorphism $ \psi : X \cong \pi^{-1}(t_0) : = X_0$ of $X$ with a fibre
$X_0$ of $\pi$.

2.1) A {\bf small deformation} is the germ $ \pi : (\X, X_0) \ra (T, t_0)$ of
a deformation.

2.2) Given a deformation  $ \pi : \X \ra T$ and a morphism $ f : T' \ra T$
with $ f (t'_0) = t_0$, the {\bf pull-back} $ f^* (\X)$
is the fibre product $ \X': = \X \times_T T'$ endowed with the projection onto
the second factor $T'$ (then $ X \cong X'_0$).

3.1) A small deformation  $ \pi : \X \ra T$ is said to be {\bf versal} or {\bf complete} if every
other small deformation $ \pi : \X' \ra T'$ is obtained from it via pull back; it is said
to be {\bf semi-universal} if the differential of $ f : T' \ra T$
 at $t'_0$ is uniquely determined, and {\bf universal}  if the morphism
 $f$ is uniquely determined.

4) Two compact complex manifolds $X,Y$ are said to be {\bf direct deformation
equivalent} if there are a deformation $ \pi : \X \ra T$ of $X$ with $T$
  irreducible and where all the fibres are smooth,
and an isomorphism $ \psi' : Y \cong \pi^{-1}(t_1) : = X_1$ of $Y$ with a
fibre $X_1$ of $\pi$.
\end{defin}

Let's however come back to the case of complex manifolds, observing that in a small deformation
of a compact complex manifold one can shrink the base $T$ and assume that all the fibres are smooth.

We can now state the results of Kuranishi and Wavrik ((\cite{kur1}, \cite{kur2}, \cite{Wav}) in the language of deformation theory.

\begin{theo}
{\bf (Kuranishi).} Let $Y$
be a compact
complex manifold: then

I) the Kuranishi family   $ \pi : (\Y, Y_0) \ra (\frak B (Y), 0)$ of
$Y$ is semiuniversal.

II) $(\frak B (Y), 0)$ is unique up to (non canonical) isomorphism, and is a germ of analytic
subspace of the vector space
$ H^1 (Y, \Theta_Y)$, inverse image of the origin under a local
holomorphic map (called Kuranishi map and denoted by $k$) 
$ k : H^1 (Y, \Theta_Y) \ra H^2 (Y, \Theta_Y) $ whose differential vanishes
at the origin. 

Moreover  the  quadratic term in the Taylor development of the Kuranishi map
$k$ is  given by  the bilinear map $ H^1 (Y, \Theta_Y) \times
   H^1 (Y, \Theta_Y) \ra  H^2 (Y, \Theta_Y)$, called Schouten bracket,
   which is the composition of  cup product  followed by Lie bracket of vector fields.

III) The Kuranishi family is a versal deformation of $Y_t$ for $t \in  \frak B (Y)$.

IV) The Kuranishi family is universal if $H^0 (Y, \Theta_Y)= 0.$

V) {\bf (Wavrik)} The Kuranishi family is universal if $ \frak B (Y)$ is reduced and $h^0 (Y_t, \Theta_{Y_t}) : = \rm{dim}\ H^0 (Y_t, \Theta_{Y_t}) $
is constant  for $t \in   \frak B (Y)$ in a suitable neighbourhood of $0$.
\end{theo}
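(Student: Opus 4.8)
The plan is to establish the analytic core first and then extract the universal-property statements from it. I would begin by fixing a Hermitian metric on $Y$ and the induced metrics on the bundles of $(0,q)$-forms with values in $\Theta_Y = TM^{1,0}$, so that Hodge theory for the $\bar\partial$-Laplacian $\Box = \bar\partial\bar\partial^* + \bar\partial^*\bar\partial$ applies: each such space decomposes as harmonic $\oplus$ ($\bar\partial$-exact) $\oplus$ ($\bar\partial^*$-exact), with Green operator $G$ and harmonic projector $H$ satisfying $\Id = H + \Box G$. With $\eta_1,\dots,\eta_m$ a harmonic basis of $H^1(\Theta_Y)$, I would solve the Kuranishi slice equation $\phi(t) = \sum_i t_i\eta_i + \frac{1}{2}\bar\partial^* G[\phi(t),\phi(t)]$ by a formal power-series recursion in $t$, the $N$-th order term being determined by the lower-order ones. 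The delicate point is \emph{convergence}, which I would obtain (following Kuranishi) by passing to Sobolev $r$-completions, applying the implicit-function theorem for Banach spaces to the map $\phi \mapsto \phi - \frac{1}{2}\bar\partial^* G[\phi,\phi]$, whose differential at $0$ is the identity, and then invoking elliptic regularity to recover a genuine $\sC^\infty$ (indeed convergent) solution on a neighbourhood of $0 \in \CC^m$.

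The heart of the matter is the \emph{finite-dimensional reduction}: a slice solution $\phi(t)$ satisfies the integrability (Maurer--Cartan) equation $F(\phi) := \bar\partial\phi + \frac{1}{2}[\phi,\phi] = 0$ if and only if its harmonic part $H[\phi(t),\phi(t)]$ vanishes. Because the $\eta_i$ are harmonic and $(\bar\partial^*)^2 = 0$, every slice solution is co-closed, $\bar\partial^*\phi(t) = 0$; Kuranishi's computation (using the graded Leibniz rule for the Schouten bracket, co-closedness, and the decomposition $\Id = H + \Box G$) then shows, by induction on the order in $t$, that $F(\phi(t))$ is itself harmonic. Since a harmonic form is zero exactly when its harmonic projection is, and $HF(\phi) = \frac{1}{2}H[\phi,\phi]$ (as $H\bar\partial = 0$), integrability on the slice reduces to $k(t) := H[\phi(t),\phi(t)] = 0$. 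This identifies $\mathfrak B(Y)$ with the zero locus of the holomorphic Kuranishi map $k \colon H^1(\Theta_Y) \to H^2(\Theta_Y)$ with $Dk(0)=0$, proving the germ statement of II. Expanding the slice equation gives $\phi(t) = \sum_i t_i\eta_i + O(|t|^2)$, whence $k(t) = \frac{1}{2}H[\sum_i t_i\eta_i,\sum_j t_j\eta_j] + O(|t|^3)$, so the quadratic term is exactly the harmonic projection of the Schouten bracket, i.e. cup product followed by Lie bracket --- the remaining assertion of II.

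Having the integrable family over $\mathfrak B(Y)$, I would assemble it into a proper holomorphic family $\pi\colon(\mathcal Y,Y_0)\to(\mathfrak B(Y),0)$ whose Kodaira--Spencer map at $0$ is, by $\phi(t)=\sum_i t_i\eta_i+O(|t|^2)$, the canonical isomorphism $T_0\mathfrak B(Y)\cong H^1(\Theta_Y)$. \textbf{Completeness} --- that every small deformation $\pi'\colon\mathcal X'\to(T',t'_0)$ of $Y_0$ is a pullback $f^*\mathcal Y$ --- is the technical crux: writing $\mathcal X'$ as a family of small variations $\phi'(s)$, I would construct the base map $f$ together with a compensating family of diffeomorphisms by a Newton-type iteration, at each stage solving a linear $\bar\partial$-equation via $\bar\partial^* G$ (using that $DF$ surjects onto the $\bar\partial$-exact forms) and matching harmonic parts by the choice of $f$, with convergence again furnished by the Sobolev estimates of the first step. \textbf{Semiuniversality (I)} is then formal: since the Kodaira--Spencer map is an isomorphism, the differential $df_{t'_0}$ is forced to coincide with the Kodaira--Spencer map of $\pi'$ and is therefore unique. \textbf{Versality at nearby fibres (III)} follows from the openness of completeness --- the restriction of the complete family $\mathcal Y$ to a small neighbourhood of any $t\in\mathfrak B(Y)$ is again complete for $Y_t$.

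Finally, the uniqueness refinements. The residual ambiguity in $f$, beyond its differential, is governed by infinitesimal automorphisms of the fibres, i.e. by $H^0(\Theta_Y)$. For \textbf{IV}, when $H^0(\Theta_Y)=0$ there are none, so $f$ itself --- not merely $df_{t'_0}$ --- is uniquely determined, giving universality. For \textbf{V (Wavrik)}, the extra input is that when $\mathfrak B(Y)$ is \emph{reduced} and $t\mapsto h^0(Y_t,\Theta_{Y_t})$ is \emph{constant}, Grauert's semicontinuity theorem makes the spaces $H^0(Y_t,\Theta_{Y_t})$ fit together into a holomorphic vector bundle $\pi_*\Theta_{\mathcal Y/\mathfrak B}$; one then integrates these relative vector fields into a holomorphic family of automorphisms that absorbs the remaining freedom in $f$, upgrading versality to universality. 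The \emph{main obstacles} are analytic and concentrated in two places: the convergence of the implicit-function/Sobolev solution together with the elliptic regularity of the first step, and the order-by-order construction with convergence in the completeness argument for III. Parts I, II and IV are comparatively formal once these are in hand, while V requires the additional Grauert-type semicontinuity input and the integration of vector fields.
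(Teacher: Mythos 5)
Your outline follows exactly the route the paper itself sketches (and otherwise delegates to Kuranishi's and Wavrik's original articles): the slice equation $\phi(t)=\sum_i t_i\eta_i+\tfrac12\bar{\partial}^*G[\phi(t),\phi(t)]$ solved via the implicit function theorem in Sobolev completions plus elliptic regularity, the finite-dimensional reduction of Maurer--Cartan to $H[\phi(t),\phi(t)]=0$, and the standard completeness/openness/automorphism arguments for I and III--V, so there is no divergence of method. The one imprecision is your intermediate claim that $F(\phi(t))$ is itself harmonic: in general it is only co-closed, since $\bar{\partial}F=\pm[F,\phi]$ does not vanish off the integrable locus; the correct statement is the equivalence $F(\phi(t))=0\iff H[\phi(t),\phi(t)]=0$, obtained from the identity $F=HF-\bar{\partial}^*G[F,\phi(t)]$ together with a contraction (or order-by-order) estimate --- which uses precisely the ingredients you list, so the reduction and hence II go through unchanged.
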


In fact Wavrik in his article (\cite{Wav}) gives a  more general result than V); as pointed out by a referee, the
same criterion has also been proven by Schlessinger (prop. 3.10 of \cite{FAR}).

Wavrik says that the Kuranishi space
is a local moduli space under  the assumption that $h^0 (Y_t, \Theta_{Y_t})$
is locally constant. This terminology can however be confusing, as we shall show,
since in no way the Kuranishi space is like the moduli space locally, 
even if one divides out by the action of the group  $ Aut (Y)$ of biholomorphisms of
$Y$.

The first  most concrete question is how one can calculate the Kuranishi space and the Kuranishi family.
In this regard, the first resource is to try to use the implicit functions theorem.

For this purpose one needs to calculate the Kodaira Spencer map of a family $ \pi : (\Y, Y_0) \ra (T, t_0)$
of complex manifolds having a smooth base $T$.
This is defined as follows: consider the cotangent bundle sequence of the fibration
$$ 0 \ra \pi^* (\Omega^1_T) \ra  \Omega^1_{\Y} \ra  \Omega^1_{\Y| T} \ra 0,$$
and the direct image sequence of the  dual sequence of bundles,
$$ 0 \ra \pi_* (\Theta_{\Y| T}) \ra   \pi_* (\Theta_{\Y}) \ra   \Theta_T  \ra  \sR^1  \pi_* (\Theta_{\Y| T}) .$$

Evaluation at the point $t_0$ yields a map $\rho$ of the tangent space to $T$ at $t_0$ into $ H^1 (Y_0, \Theta_{Y_0})$,
which is the derivative of the variation of complex structure (see \cite{k-m71} for a more concrete description,
but beware that the  definition given above is the most effective for calculations). 

\begin{cor}
Let $Y$
be a compact
complex manifold and assume that we have a family $ \pi : (\Y, Y_0) \ra (T, t_0)$ with smooth base $T$, such that $Y \cong Y_0$,
and such that the Kodaira Spencer map $\rho_{t_0}$ surjects onto $ H^1 (Y, \Theta_Y)$.

Then the Kuranishi space $\frak B (Y)$ is smooth and there is a submanifold $T' \subset T$ which maps isomorphically to $\frak B (Y)$;
hence the Kuranishi family is the restriction of $\pi$ to $T'$. 
\end{cor}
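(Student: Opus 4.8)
The plan is to exploit the semi-universality of the Kuranishi family, proved in part I of Kuranishi's theorem above, in order to reduce everything to a statement about the differential of the induced classifying map. Since the Kuranishi family $\pi : (\Y, Y_0) \ra (\frak B (Y), 0)$ is versal, the given family is obtained from it by pull back: there is a holomorphic germ $f : (T, t_0) \ra (\frak B (Y), 0)$ together with an isomorphism of $\pi : (\Y, Y_0) \ra (T, t_0)$ with the pull back $f^*$ of the Kuranishi family. I would first record the two pieces of information attached to $f$ that I intend to use, namely its existence (from versality) and the behaviour of Kodaira--Spencer maps under base change.

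Next I would identify the differential $d f_{t_0}$ with the Kodaira--Spencer map $\rho_{t_0}$. Here one uses two facts. First, because the Kuranishi map $k$ has vanishing differential at the origin and $\frak B (Y)$ is cut out inside $H^1 (Y, \Theta_Y)$ by $k$, the Zariski tangent space $T_0 \frak B (Y)$ is \emph{all} of $H^1 (Y, \Theta_Y)$; moreover the Kodaira--Spencer map of the Kuranishi family at $0$ is the canonical identification of $T_0 \frak B (Y)$ with $H^1 (Y, \Theta_Y)$, i.e. the identity. Second, the Kodaira--Spencer map is functorial under pull back, so that $\rho_{t_0} = \mathrm{id} \circ d f_{t_0} = d f_{t_0}$ as maps into $H^1 (Y, \Theta_Y)$. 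Consequently the hypothesis that $\rho_{t_0}$ surjects onto $H^1 (Y, \Theta_Y)$ becomes exactly the surjectivity of $d f_{t_0}$ onto $T_0 \frak B (Y) = H^1 (Y, \Theta_Y)$. I expect this identification to be the point requiring the most care, since it rests on unwinding the construction of the Kuranishi slice far enough to see that its Kodaira--Spencer map is the identity; everything afterwards is formal.

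Granting this, smoothness of $\frak B (Y)$ is immediate. Composing $f$ with the inclusion $\iota : \frak B (Y) \into H^1 (Y, \Theta_Y)$ gives a holomorphic germ $F = \iota \circ f : (T, t_0) \ra H^1 (Y, \Theta_Y)$ whose differential $d F_{t_0}$ is surjective. Since $T$ is smooth, $F$ is a submersion at $t_0$, hence an open map near $t_0$, so its image contains an open neighbourhood of $0$ in $H^1 (Y, \Theta_Y)$. But this image is contained in $\frak B (Y) \subseteq H^1 (Y, \Theta_Y)$, forcing $\frak B (Y)$ to coincide, as a germ at $0$, with the ambient smooth space $H^1 (Y, \Theta_Y)$; in particular $\frak B (Y)$ is smooth of dimension $m = \dim H^1 (Y, \Theta_Y)$.

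Finally I would produce $T'$ by the implicit function theorem. Now that $\frak B (Y)$ is smooth, $f : (T, t_0) \ra (\frak B (Y), 0)$ is a submersion of complex manifolds, so one may choose a smooth slice $T' \subset T$ through $t_0$ of dimension $m$, transverse to the fibres of $f$, on which $f$ restricts to a biholomorphism $f|_{T'} : T' \ra \frak B (Y)$. Pulling the Kuranishi family back along this isomorphism, the restriction $\pi|_{T'}$ is canonically isomorphic to the Kuranishi family; equivalently, the Kuranishi family is recovered as the restriction of $\pi$ to $T'$, which is the assertion.
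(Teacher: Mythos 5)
Your proposal is correct and follows essentially the same route as the paper, which simply records the key point that versality yields a classifying map $f : (T,t_0) \ra (\frak B (Y),0)$ whose derivative is the Kodaira--Spencer map $\rho_{t_0}$. You have merely filled in the details the paper leaves implicit (the identification $T_0\frak B(Y)=H^1(Y,\Theta_Y)$ since $dk_0=0$, the openness of the submersion $\iota\circ f$ forcing $k\equiv 0$ near the origin, and the choice of a slice $T'$ transverse to the fibres of $f$), and each of these steps is carried out correctly.
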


The key point is that, by versality  of the Kuranishi family, there is a morphism $f : T \ra \frak B (Y)$ inducing $\pi$ as a pull back,
and $\rho$ is the derivative of $f$. 

This approach clearly works only if $Y$ is {\bf unobstructed}, which simply means
that $\frak B (Y)$ is smooth. In general it is difficult to describe the Kuranishi map, and even calculating the quadratic term
is nontrivial (see \cite{quintics} for an interesting  example).

In general, even if it is difficult to calculate the Kuranishi map,  Kuranishi theory  gives a lower bound for the `number of moduli'
of $Y$, since it shows that $\frak B (Y)$ has dimension  $\geq  h^1 (Y, \Theta_Y) - h^2 (Y, \Theta_Y)$. In the case of curves
$ H^2 (Y, \Theta_Y) = 0$, hence curves are unobstructed; in the case of a surface $S$ 
$$ \rm{dim} \frak B (S ) \geq  h^1 ( \Theta_S) - h^2 ( \Theta_S) = - \chi ( \Theta_S) + h^0 ( \Theta_S) = 10 \chi (\hol_S) - 2 K^2_S + h^0 ( \Theta_S).$$

The above is the Enriques' inequality (\cite{enr}, observe that Max Noether postulated equality),  proved by Kuranishi in all cases and also for non algebraic surfaces.

There have been recently two examples where resorting
to the Kuranishi theorem in the obstructed case has been useful.

The first one appeared  in a preprint by  Clemens (\cite{clemens1}), who then published the  proof in \cite{clemens}; it shows that if a manifold is K\"ahlerian, then there are fewer obstructions than foreseen,
since a small deformation $Y_t$ of a K\"ahler manifold is again K\"ahler, hence the Hodge decomposition still holds
for $Y_t$. 

Another independent proof was  given by Manetti  in \cite{Manobs}.

\begin{theo}{\bf (Clemens-Manetti)}\label{Hodgekills}
Let $Y$ be a compact complex K\"ahler manifold.

 Then there exists an analytic automorphism of  $ H^2 (Y, \Theta_Y)$ with linear part equal to the identity,
 such that the Kuranishi map $ k : H^1 (Y, \Theta_Y) \ra H^2 (Y, \Theta_Y) $
takes  indeed values in the intersection of the subspaces 
$$ Ker  ( H^2 (Y, \Theta_Y)  \ra  Hom ( H^q (\Omega^p_Y) , H^{q+2} (\Omega^{p-1}_Y))$$
(the linear map is induced by cohomology cup product and tensor contraction). 
\end{theo}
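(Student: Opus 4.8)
The plan is to exploit the persistence of the Hodge decomposition under small deformations of a K\"ahler manifold and to read the obstruction off as the second-order defect of transporting Hodge classes. By the theorem of Kodaira and Spencer, every fibre $Y_t$ of the Kuranishi family over a neighbourhood of $0$ is again K\"ahler, so the Hodge--de Rham spectral sequence degenerates at $E_1$ for all $t$ and the Hodge numbers $h^{p,q}(Y_t)$ stay constant. First I would encode this persistence through the contraction action of the universal Maurer--Cartan solution $\phi(t)$ on forms: for a $d$-closed $(p,q)$-form $\alpha$ representing a class in $H^q(\Omega^p_Y)$, the transported form $e^{i_{\phi(t)}}\alpha$ must continue to represent a class of the same Hodge type on $Y_t$. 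Expanding in $t$, the first-order term is the contraction $\phi(t)\lrcorner\,\alpha$, which is Griffiths' infinitesimal variation of Hodge structure and is unobstructed; this is precisely why the linear part of the asserted automorphism is the identity.

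The heart of the computation is second order. Writing $\phi(t)=\sum_i t_i\eta_i+\tfrac12\bar\partial^*G[\phi(t),\phi(t)]+\cdots$ as in the Kuranishi slice, the $(p-1,q+2)$-component of $\tfrac12 i_{\phi(t)}^2\alpha$ is governed, modulo $\bar\partial$-exact terms, by $i_{[\phi(t),\phi(t)]}\alpha$, whose harmonic part is $k(t)\lrcorner\,\alpha$, where $k(t)=H[\phi(t),\phi(t)]$ is the Kuranishi obstruction and $\lrcorner$ is the cup-product-and-contraction map $H^2(\Theta_Y)\to\Hom(H^q(\Omega^p_Y),H^{q+2}(\Omega^{p-1}_Y))$ of the statement. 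Next I would invoke the $\partial\bar\partial$-lemma, valid on compact K\"ahler manifolds: it forces the relevant $\bar\partial$-closed, $d$-exact harmonic-projected terms to vanish in cohomology. Applied here, it shows that the naive second-order defect of $e^{i_{\phi(t)}}\alpha$ is $d$-exact, so the transported class genuinely persists, but only after correcting $k(t)$ by a term that is itself an analytic function of $k$.

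The final step is to package these corrections, order by order, into a single analytic automorphism $\Psi$ of $H^2(\Theta_Y)$ with $\Psi=\Id+(\text{higher order})$, such that $\Psi\circ k$ takes values in $\bigcap_{p,q}\mathrm{Ker}\big(H^2(\Theta_Y)\to\Hom(H^q(\Omega^p_Y),H^{q+2}(\Omega^{p-1}_Y))\big)$. I expect the main obstacle to be exactly this absorption: one must show that the failure of $k(t)\lrcorner\,\alpha$ to vanish is not an independent constraint but is expressible through $k(t)$ itself, so that it can be cancelled by a coordinate change rather than forcing $k$ into a smaller space outright. This is where the K\"ahler hypothesis is indispensable.

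The cleanest route to securing that last point is Manetti's. The contraction map from the Kodaira--Spencer differential graded Lie algebra $A^{0,\bullet}(\Theta_Y)$ to the endomorphism algebra of the de Rham complex is a morphism of differential graded Lie algebras, and the formality of the K\"ahler case (Deligne--Griffiths--Morgan--Sullivan) transports this morphism to cohomology in a way that realizes the required automorphism. I would therefore carry out the DGLA computation in parallel with the geometric transport argument above, using the latter for motivation and the former to pin down the normal-form statement to all orders.
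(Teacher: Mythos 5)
The paper gives no proof of this theorem: it is quoted as the Clemens--Manetti theorem, and the only indications offered are that Clemens's argument ``uses directly the Kuranishi equation'' together with the fact that small deformations of a K\"ahler manifold remain K\"ahler, while Manetti's argument uses differential graded Lie algebras. Your outline tracks both of those routes faithfully -- persistence of the Hodge decomposition, transport of a $(p,q)$-class by $e^{i_{\phi(t)}}$, the second-order appearance of $H[\phi(t),\phi(t)]$ under contraction, and the DGLA reformulation -- so there is no divergence of method to report, only the question of whether your sketch actually closes the argument.

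It does not, and you have named the gap yourself. The theorem does not assert that $k$ lands in $\bigcap_{p,q}\mathrm{Ker}\big(H^2(Y,\Theta_Y)\ra \Hom(H^q(\Omega^p_Y),H^{q+2}(\Omega^{p-1}_Y))\big)$ on the nose, but only after composition with an analytic automorphism $\Psi=\mathrm{Id}+(\text{higher order})$ of $H^2(Y,\Theta_Y)$; producing $\Psi$ is the entire content. For that one must prove that the harmonic projections of the contraction terms $i_{[\phi(t),\phi(t)]}\alpha$ which fail to vanish are themselves analytic functions of $k(t)$ -- hence removable by a coordinate change on the target -- rather than independent constraints on $t$. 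Your sketch labels this ``the main obstacle'' and then defers to formality, but the $\partial\bar\partial$-lemma alone only disposes of $d$-exact, $\bar\partial$-closed classes; the absorption requires either the order-by-order bookkeeping of Clemens's formal Kuranishi theory, or, in Manetti's formulation, the precise statement that contraction defines a Cartan homotopy from the Kodaira--Spencer DGLA into a homotopy-abelian target controlling the variation of the Hodge filtration -- a finer input than Deligne--Griffiths--Morgan--Sullivan formality of the de Rham algebra. Until one of these is actually carried out, the proposal is an accurate outline of the two published proofs rather than a proof.
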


Clemens' proof uses directly the Kuranishi equation, and a similar method was used by S\"onke Rollenske
in \cite{rol1}, \cite{rol2}  in order to consider the deformation theory of complex manifolds yielding 
left invariant complex structures on nilmanifolds. Rollenske  proved, among other results,  the following 

\begin{theo}{\bf (Rollenske)}
Let $Y$ be a compact complex manifold corresponding to a left invariant complex structure
on a real nilmanifold. Assume that the following condition is verified:

(*) the inclusion of the complex  of left invariant forms of pure antiholomorphic type in the 
Dolbeault complex $$(\bigoplus  _p H^0(\sA ^{(0,p)}(Y)),\overline{ \partial})$$ yields an isomorphism of cohomology groups.

 Then every small deformation of the complex structure of $Y$ consists of left invariant complex structures.
\end{theo}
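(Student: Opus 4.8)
The plan is to reduce the whole question to the explicit Kuranishi slice recalled above, and to argue that, under hypothesis (*), this entire construction can be performed inside the finite-dimensional space of left-invariant forms. By Kuranishi's theorem every small deformation of $Y$ is induced by pull-back from the Kuranishi family over $\frak B (Y)$, so it suffices to show that the power-series solution
$$ \phi (t) = \sum_i t_i \eta_i + \frac{1}{2} \bar{\partial}^* G [\phi (t), \phi (t)] $$
of the Kuranishi equation is, for every $t$, a \emph{left-invariant} small variation of complex structure; the corresponding operator $\bar{\partial}_{\phi(t)} = \bar{\partial} + \phi(t)$ then defines a left-invariant complex structure. First I would fix a left-invariant Hermitian metric on $Y$. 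Since $\bar{\partial}$ commutes with pull-back by left translations and the metric is invariant, the adjoint $\bar{\partial}^*$, the Laplacian, the Green operator $G$ and the harmonic projector $H$ all preserve the finite-dimensional subspace of left-invariant forms.

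The first key step is to show that, under (*), every harmonic form entering the construction is left-invariant. Because the Laplacian preserves invariant forms, the invariant subcomplex carries its own Hodge decomposition, so its cohomology is represented by invariant harmonic forms; combined with the quasi-isomorphism (*) this gives
$$ \dim \mathcal H_{\mathrm{inv}} = \dim H^{\bullet}_{\mathrm{inv}} = \dim H^{\bullet}_{\bar{\partial}}(Y) = \dim \mathcal H, $$
where $\mathcal H_{\mathrm{inv}} \subseteq \mathcal H$ denotes the invariant harmonic forms inside all harmonic forms. Equality of dimensions forces $\mathcal H_{\mathrm{inv}} = \mathcal H$: every harmonic form is left-invariant. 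In particular a basis $\eta_1, \dots, \eta_m$ of $H^1(\Theta_Y)$ may be chosen to consist of invariant forms, and $H^2(\Theta_Y)$ is likewise spanned by invariant harmonic forms.

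The second step is an induction on the order in $t$. Writing $\phi(t) = \sum_{k \geq 1} \phi_k(t)$ with $\phi_k$ the homogeneous part of degree $k$, the linear term $\sum_i t_i \eta_i$ is invariant by the previous step. Assuming $\phi_1, \dots, \phi_{k-1}$ invariant, the degree-$k$ part of the Schouten bracket $[\phi(t),\phi(t)]$ is a bracket of invariant forms, hence invariant, and applying $\bar{\partial}^* G$ — which preserves invariance — produces an invariant $\phi_k$. Thus $\phi(t)$ is left-invariant term by term, and the defining relation $H[\phi(t),\phi(t)]=0$ of $\frak B (Y)$ becomes an equation among invariant harmonic forms. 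Consequently the Kuranishi family consists entirely of left-invariant complex structures, which yields the theorem.

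The hard part will be the passage from hypothesis (*), stated for the \emph{scalar} antiholomorphic Dolbeault complex, to the invariance of the $\Theta_Y$-valued harmonic forms in degrees $1$ and $2$ actually used above. The natural device is the left-invariant frame of the holomorphic tangent bundle coming from the complex structure on the Lie algebra $\mathfrak g$: expressing tangent-valued forms in this frame relates the $\Theta_Y$-valued Dolbeault complex to copies of the scalar one. The subtlety is that this frame is not $\bar{\partial}$-holomorphic, so $\bar{\partial}$ mixes the scalar components and the relation is not a plain direct sum; one must check that (*) nonetheless propagates through the resulting filtration, and that the identification is compatible with the invariant metric, so that harmonicity and invariance are preserved simultaneously. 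Establishing this compatibility is the technical core on which the argument rests.
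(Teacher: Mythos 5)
Your strategy coincides with the one the paper indicates (the paper gives no actual proof of this statement, only the remark that ``the main idea \dots is to look at Kuranishi's equation, and to see that everything is then left invariant''), and the two steps you do carry out are essentially sound: the finite--dimensional Hodge-theoretic argument showing that a quasi-isomorphic invariant subcomplex must exhaust the harmonic forms, and the order-by-order induction showing that $\phi(t)$ stays invariant because $\bar{\partial}$, $\bar{\partial}^*$, $G$, $H$ and the Schouten bracket all preserve the invariant forms. One caveat on the justification: left translations do not act on the compact quotient $\Gamma\backslash G$, so ``equivariance under left translations'' is not literally why $G$ and $H$ preserve invariance; the correct reason is that the invariant forms are a finite-dimensional subspace stable under the self-adjoint elliptic operator $\Delta$, so that $\ker \Delta$, its orthogonal complement, and hence the Green operator all split along this subspace.

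The genuine gap is the one you flag yourself and then leave open: hypothesis (*) concerns the scalar complex $(\bigoplus_p H^0(\sA^{(0,p)}(Y)),\bar{\partial})$, whereas every step of your argument requires the analogous quasi-isomorphism for the $\Theta_Y$-valued Dolbeault complex in degrees $1$ and $2$ --- without it you cannot choose the $\eta_i$ invariant, nor place $H[\phi,\phi]$ in the invariant subspace, and the induction never starts. This implication is not formal and is in fact the principal content of \cite{rol1}: one produces a filtration of $T^{1,0}Y$ by left-invariant \emph{holomorphic} subbundles coming from an ascending series of the Lie algebra compatible with $J$, whose graded quotients are trivial holomorphic bundles, so that the tangent-valued complex is assembled from finitely many copies of the scalar one by extensions; the associated long exact sequences together with (*) and the five lemma then transport the quasi-isomorphism (and hence, by your dimension count, the invariance of harmonic representatives) to the $\Theta_Y$-valued setting. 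As written, your proposal correctly reduces the theorem to this lemma but does not prove it, so it is a correct reduction rather than a complete proof.
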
 

The main idea, in spite of the technical complications, is to look at Kuranishi's equation, and to see that everything is then left invariant.

Rollenske went over in \cite{rol3} and showed that for the complex structures on nilmanifolds which are complex parallelizable
Kuranishi space is defined by explicit polynomial equations, and most of the time singular.

There have been several attempts to have a more direct approach to the understanding of the Kuranishi map,
namely to do things more algebraically and giving up to consider the Kuranishi slice.
This approach has been pursued for instance in \cite{SS} and effectively applied by Manetti.
For instance, as already mentioned, Manetti (\cite{Manobs}) gave a nice elegant proof of the above theorem \ref{Hodgekills}
using the notion of  differential graded Lie algebras, abbreviated by the acronym DGLA 's.

The typical example of such a DGLA is provided by the Dolbeault complex 
$$(\bigoplus _p H^0 ( \sA ^{(0,p)}(TM^{1,0}_Y)),\overline{ \partial})$$
further endowed with the operation of Schouten  bracket (here: the composition of  exterior product followed by Lie bracket of vector fields), which is graded commutative.

The main thrust is to look at solutions of the Maurer Cartan equation
$ \bar{\partial} (\phi) + \frac{1}{2} [ \phi, \phi ] = 0$ modulo gauge transformations, i.e., exponentials of sections
in $H^0 ( \sA ^{(0,0)}(TM^{1,0}_Y))$.

The deformation theory concepts generalize from the case of deformations of compact complex manifolds
to the more general setting of DGLA's , which seem to govern almost all of the deformation type
problems (see for instance  \cite{man09}).

\subsection{Kuranishi and Teichm\"uller}

Returning to our setting where we considered the  closed subspace $ \sC (M)$  of $\sA \sC (M)$ consisting of the set of complex structures
on $M$, it is clear that there is a universal tautological family of complex structures
parametrized by $ \sC (M)$, and with total space
$$\mathfrak U_{ \sC (M)} : =  M   \times  \sC (M) ,$$
on which the group   $\sD iff ^+ (M)$ naturally acts, in particular  $\sD iff ^0(M)$.

A rather simple observation is that $\sD iff ^0(M)$ acts freely on $ \sC (M)$ if and only if for each complex structure
$Y$ on $M$ the group of biholomorphisms $Aut(Y)$ contains no automorphism which is differentiably isotopic
to the identity (other than the identity).

\begin{defin}
A compact complex manifold $Y$  is said to be {\bf rigidified}  if $Aut(Y) \cap \sD iff ^0(Y) = \{ Id_Y \}.$

A compact complex manifold $Y$  is said to be cohomologically rigidified if $Aut(Y) \ra Aut (H^* (Y, \ZZ))$
is injective, and rationally cohomologically rigidified if  $Aut(Y) \ra Aut (H^* (Y, \QQ))$ is injective.
\end{defin}

The condition of being rigidified is obviously stronger than the condition $H^0 (\Theta_Y)=0$, which is necessary, else there is
a positive dimensional Lie group of biholomorphic self maps, and is weaker  than the condition of being cohomologically rigidified.

Compact curves of genus $ g \geq 2$ are rationally cohomologically rigidified since if $\tau : C \ra C$ is an automorphism
acting trivially on cohomology, then in the product $ C \times C$ the intersection number of the diagonal $\De_C$ with
the graph $\Ga_{\tau}$  equals the self intersection of the diagonal, which is the Euler number $ e(C) = 2 - 2g < 0$.
But, if $\tau$ is not the identity, $\Ga_{\tau}$ and $\De_C$ are irreducible and distinct, and their intersection number
is a non negative number, equal to the number of fixed points of $\tau$, counted with multiplicity: a contradiction. 

It is an interesting question whether compact complex manifolds of general type are rigidified.
It is known that already for  surfaces of general type there are examples which are not rationally cohomologically rigidified
(see a partial classification done by Jin Xing Cai in \cite{cai}), while examples which are not cohomologically
rigidified might exist among surfaces isogenous to a product (potential candidates have been proposed by Wenfei Liu).

Jin Xing Cai pointed out to us that, for simply connected (compact) surfaces, by a result of Quinn (\cite{quinn}), every automorphism acting trivially in rational cohomology is isotopic to the identity, and that he conjectures that
  simply connected  surfaces of general type are rigidified (equivalently, rationally cohomologically rigidified).

\begin{rem}
Assume that the complex manifold $Y$ has $H^0 (\Theta_Y)=0$, or satisfies Wavrik's condition, but is not rigidified: then by Kuranishi' s  third 
theorem, there is an automorphism $ f \in Aut (Y) \cap \sD iff ^0(Y)$ which lies outside of a fixed neighbourhood of the identity. $f$ acts therefore on the Kuranishi space, hence, in order that the natural map from Kuranishi space to Teichm\"uller space be injective, $f$ must act trivially on $ \frak B (Y)$, which means that $f$ remains biholomorphic for
all small deformations of $Y$.

\end{rem}

At any case, the condition of being rigidified implies that the tautological family of complex structures
descends to a universal family of complex structures on Teichm\"uller space:

$$\mathfrak U_{ \sT (M)} : =   (M   \times  \sC (M)) / \sD iff ^0(M) \ra  \sC (M)) / \sD iff ^0(M) = \sT (M) .$$
on which the mapping class group acts.

Fix now a complex structure yielding a compact complex manifold $Y$, and compare with the Kuranishi family
$$\sY \ra \frak B (Y).$$

Now, we already remarked that there is a locally surjective continuous map of $ \frak B (Y)$ to the germ
 $\sT (M)_Y$ of $\sT (M)$ at the point corresponding to the complex structure yielding $Y$. 
 For curves this map is a local homeomorphism, and this fact provides a complex structure on Teichm\"uller space.
 
\begin{rem}
Indeed we observe that more generally, if 

1) the Kuranishi family is universal at any point

2)   $ \frak B (Y) \ra \sT (M)_Y$ is a local homeomorphism at every point, then

 Teichm\"uller space has a natural structure of complex space.

Moreover

3) since $ \frak B (Y) \ra \sT (M)_Y$ is surjective,   it is a local homeomorphism iff it is injective; in fact, since $\sT(M)$ has the quotient topology and it is the quotient by a group action, and  $ \frak B (Y)$ is a local slice for a subgroup of
$\sD iff ^0(M)$, the projection  $ \frak B (Y) \ra \sT (M)_Y$ is open.

\end{rem} 

The simple idea used by Arbarello and Cornalba (\cite{ar-cor}) to reprove the result for curves is to establish the universality
of the Kuranishi family for continuous families of complex structures.

In fact, if any family is locally induced by the Kuranishi family, and we have rigidified manifolds only,
then there is a continuous inverse to the map $ \frak B (Y) \ra \sT (M)_Y$, and we have the desired local homeomorphism
between Kuranishi space and Teichm\"uller space.

Since there are many cases (for instance, complex tori) where Kuranishi and Teichm\"uller space coincide,
yet the manifolds are not rigidified, we give a simple criterion.

\begin{prop}\label{kur=teich}
 1)  The continuous map $\pi \colon  \frak B (Y) \ra \sT (M)_Y$ is a local homeomorphism
  between Kuranishi space and Teichm\"uller space if there is an
  injective continuous map
  $f \colon  \frak B (Y) \ra Z$, where $Z$ is Hausdorff, which factors through $\pi$.
  
 2)  Assume that $Y$ is a compact K\"ahler manifold and that the local period map $f$ is injective:
  then $\pi \colon  \frak B (Y) \ra \sT (M)_Y$ is a local homeomorphism.
  
  3) In particular, this holds if $Y$ is K\"ahler with trivial canonical divisor \footnote{As observed by a referee, the same proof works when 
   $Y$ is K\"ahler with torsion canonical divisor, since one can consider the local period map of the canonical cover of $Y$}.

\end{prop}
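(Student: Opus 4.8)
\emph{Proof plan.} The plan for part 1) is to reduce everything to the openness statement already recorded in the Remark preceding this proposition. There $\pi$ is continuous, surjective onto the germ $\sT (M)_Y$, and \emph{open}, the last because $\sT (M)$ carries the quotient topology for the $\sD iff^0(M)$-action and $\frak B (Y)$ is a local slice. Since a continuous open bijection is a homeomorphism, it suffices to show that $\pi$ is injective. This is forced by the hypothesis: writing the factorization as $f = g \circ \pi$, if $\pi(t) = \pi(t')$ then $f(t) = g(\pi(t)) = g(\pi(t')) = f(t')$, whence $t = t'$ by injectivity of $f$. Hence $\pi$ is injective, and being a continuous open surjection it is a local homeomorphism. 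I note that Hausdorffness of $Z$ is not used in this implication; it enters only in part 2) to guarantee that the target of the period map separates points, and it shows a posteriori that $\sT (M)_Y$ is Hausdorff.

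For part 2) I would exhibit the local period map as a map $f$ of the kind required in 1). Because $Y$ is K\"ahler and K\"ahlerness is open under small deformation, every fibre of the Kuranishi family $p \colon \sY \ra \frak B (Y)$ is again K\"ahler and so carries a Hodge decomposition. Over the contractible germ $\frak B (Y)$ the local system $R^k p_* \ZZ$ is trivial, so after identifying $H^k(Y_t,\CC)$ with the fixed space $H^k(M,\CC)$ the Hodge filtrations $F^\bullet H^k(Y_t)$ define a holomorphic, hence continuous, map $f \colon \frak B (Y) \ra Z$ into the relevant flag manifold (period domain) $Z$, which is Hausdorff. The one point to verify is that $f$ factors through $\pi$: if $\pi(t) = \pi(t')$ there is $\psi \in \sD iff^0(M)$ inducing a biholomorphism $Y_t \ra Y_{t'}$; then $\psi^*$ is an isomorphism of Hodge structures, but since $\psi$ is isotopic to the identity it acts as the identity on $H^*(M,\ZZ)$ and hence on $H^*(M,\CC)$, so the two filtrations coincide as points of $Z$ and $f(t) = f(t')$. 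With $f$ injective by hypothesis, part 1) applies and gives the claim.

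For part 3) it remains to check that the local period map is injective when $K_Y \cong \hol_Y$, which is the local Torelli statement in this setting. By the theorem of Bogomolov--Tian--Todorov, $Y$ is unobstructed, so $\frak B (Y)$ is smooth; it therefore suffices to show that the differential of $f$ at $0$ is injective, after which $f$ is a holomorphic immersion of a smooth germ and hence locally injective. I would look at the period of $H^n(Y)$ with $n = \dim Y$, where $F^n H^n = H^0(\Omega^n_Y)$ is spanned by a nowhere-vanishing holomorphic $n$-form $\omega$. In this component the derivative of $f$ is the contraction map $H^1(\Theta_Y) \ra H^1(\Omega^{n-1}_Y)$, $\xi \mapsto \iota_\xi \omega$, induced by the Kodaira--Spencer class; since $\omega$ trivialises $K_Y$, interior multiplication by $\omega$ is an isomorphism of sheaves $\Theta_Y \cong \Omega^{n-1}_Y$, so this derivative is an isomorphism, in particular injective. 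Thus $f$ is injective near $0$ and part 2) concludes. For $K_Y$ of finite order one passes to the canonical cover, as in the footnote.

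The genuinely substantive step is part 3): the injectivity of the differential of the period map. This rests on two nontrivial inputs, namely unobstructedness (so that $\frak B (Y)$ is a manifold, where ``injective differential implies local injectivity'' is legitimate) and the identification of the Kodaira--Spencer-to-period derivative with contraction against the holomorphic volume form. Parts 1) and 2) are, by contrast, essentially formal once openness of $\pi$ (from the Remark) and the homotopy-invariance of the $\sD iff^0(M)$-action on cohomology are in hand; the only care needed there is to make the period map well defined by trivialising the cohomology local system over the contractible Kuranishi germ.
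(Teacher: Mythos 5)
Your proposal is correct, and for parts 2) and 3) it follows essentially the same route as the paper: deform within the K\"ahler class, use the Griffiths period map into the (separated) period domain as the map $f$, and prove local Torelli for trivial $K_Y$ via Bogomolov--Tian--Todorov smoothness plus injectivity of the derivative on the $(n,0)$-component. The one genuine divergence is in part 1). The paper argues that, since $\frak B (Y)$ is locally compact and $Z$ is Hausdorff, the injective continuous map $f$ is a homeomorphism onto its image $Z'$, and then exhibits the continuous inverse of $\pi$ explicitly as $f^{-1}\circ F$ (where $f=F\circ\pi$); Hausdorffness of $Z$ is thus genuinely used there. You instead deduce injectivity of $\pi$ from injectivity of $f$ and invoke the openness of $\pi$ recorded in point 3) of the preceding Remark (quotient topology plus the slice property), so that a continuous open bijection is a homeomorphism; as you observe, Hausdorffness of $Z$ then plays no role in part 1) itself. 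Both arguments are sound; the paper's buys a direct continuous inverse without leaning on the openness statement, while yours is shorter given that the Remark is already in place. Two further points in your favour: you explicitly check that the period map factors through $\pi$ (trivialising $R^kp_*\ZZ$ over the contractible germ and using that an element of $\sD iff^0(M)$ acts trivially on $H^*(M,\ZZ)$), a step the paper leaves implicit but which is needed to apply part 1); and in part 3) you phrase local Torelli via the contraction isomorphism $\Theta_Y\cong\Omega^{n-1}_Y$ induced by the trivialising $n$-form, which is equivalent to the paper's Serre-duality computation.
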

 
 \Proof
1) : observe that, since $ \frak B (Y)$ is locally compact and $Z$ is Hausdorff,
 it follows that $f$ is a homeomorphism with its image $ Z' : = Im f \subset Z$.
  Given the factorization $ f = F \circ \pi$, then the inverse of $\pi$ is the composition
  $ f^{-1} \circ F$, hence $\pi$ is a homeomorphism.
  
 2) :  if $Y$ is K\"ahler, then every small deformation $Y_t$ of $Y$ is still K\"ahler, as it is well known (see \cite{k-m71}).
  
  Therefore one has the Hodge decomposition 
  $$ H^* (M, \CC) =   H^* (Y_t, \CC) = \bigoplus_{p,q} H^{p,q} (Y_t)$$
  and the corresponding period map  $f \colon  \frak B (Y) \ra \mathfrak D$, 
  where $ \mathfrak D$ is the period domain classifying Hodge structures
  of type $ \{ (h_{p,q}) |0 \leq p,q,  p + q \leq 2n \}.$ 
  
  As shown by Griffiths in \cite{griff1}, see also \cite{griff2} and \cite{vois}, the period map is indeed holomorphic, in particular continuous,
  and  $ \mathfrak D$ is a separated  complex manifold, hence 1) applies.

 3) the previous criterion applies in several situations, for instance, when $Y$ is a
 compact K\"ahler manifold with trivial canonical bundle. 
 
 In this case the Kuranishi space
 is smooth (this is the so called Bogomolov-Tian-Todorov theorem, compare \cite{bogomolov}, \cite{tian}, \cite{todorov}, and see also \cite{ran} 
 and \cite{kawamata} for more general results) and the local period map for the period of holomorphic n-forms
 is an embedding, since the derivative of the period map, according to \cite{griff1} is given
 by cup product
 $$\mu \colon  H^1 (Y, \Theta_Y)  \ra   \oplus_{p,q} Hom ( H^q (\Omega^p_Y) , H^{q+1} (\Omega^{p-1}_Y))$$
 $$ = \oplus_{p,q} Hom (H^{p,q}(Y) , H^{p-1,q+1}(Y)) .$$
 
 If we apply it for $q=0, p=n$, we get  that $\mu$ is injective, since  by Serre duality
 $  H^1 (Y, \Theta_Y)  =  H^{n-1} (Y, \Omega^1_Y \otimes \Omega^n_Y)^{\vee}$ 
 and cup product with $H^0 (\Omega^n_Y)$ yields an isomorphism with
 $H^{n-1} (Y, \Omega^1_Y )^{\vee}$ which is  by Serre duality exactly isomorphic to 
  $ H^{1} (\Omega^{n-1}_Y)$.

  \qed
  
 As we shall see later, a similar criterion applies to show `Kuranishi= Teichm\"uller' for most minimal models of surfaces
 of general type.

 For more general complex manifolds, such that the Wavrik condition holds, then the Kuranishi family is universal at any point, so a program which has been in the air for a quite long time has been the one to glue together
 these Kuranishi families, by a sort of analytic continuation giving another variant of Teichm\"uller space.
 
 We hope to be able to return on this point in the future.

\section{The role of singularities}

\subsection{Deformation of singularities and singular spaces}

The basic analytic result is the generalization due to Grauert of
Kuranishi's theorem (\cite{grauert}, see also \cite{sernesi} for the algebraic analogue)

\begin{theo}
{\bf Grauert's Kuranishi type theorem for complex spaces.} Let $X$
be a compact
complex space: then

I) there is a semiuniversal deformation $ \pi : (\X, X_0) \ra (T, t_0)$ of
$X$, i.e., a deformation such that every other small deformation
$ \pi' : (\X', X'_0) \ra (T', t'_0)$ is the pull-back of $\pi$ for
an appropriate morphism $f : (T', t'_0) \ra (T, t_0)$ whose
differential at $t'_0$ is uniquely determined.

II) $(T, t_0)$ is unique up to isomorphism, and is a germ of analytic
subspace of the
vector space
$\T ^1$ of first order deformations. 

$(T, t_0)$ is the  inverse image of the origin under a local
holomorphic map (called Kuranishi map and denoted by $k$) 
$$ k :
\T ^1  \ra \T ^2 $$  to the finite dimensional vector space $\T ^2$ (called {\bf obstruction space}), and  whose differential vanishes
at the origin (the point corresponding to the point $t_0$).

If $X$ is reduced, or if the singularities of $X$ are local complete intersection singularities, then $\T ^1 =  {\rm Ext }^1 (\Omega^1_X, \hol_X ).$ 

If the singularities of $X$ are local complete intersection singularities, then $ \T ^2 = {\rm Ext }^2 (\Omega^1_X, \hol_X) $ .
\end{theo}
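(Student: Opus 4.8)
The plan is to treat the two halves of the statement by completely different means: parts (I) and (II) are the deep analytic existence result, for which I would follow Grauert's generalization of the Kuranishi theorem already recorded above for manifolds, while the identification of $\T^1$ and $\T^2$ with $\Ext$-groups is a formal computation with the cotangent complex. For existence, the idea is to imitate the construction of the Kuranishi slice, but since $X$ is singular the Dolbeault/elliptic machinery (Green operator, harmonic projection) used in the smooth case is unavailable. Instead I would work in Douady's category of Banach analytic spaces: present the deformation problem by a map $F$ between Banach spaces coming from a local resolution of $\hol_X$ (or of the ideal of $X$ in an ambient smooth space), whose zero set is the set of integrable deformations. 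The first-order data are classified by the tangent space $\T^1$ and the obstructions lie in $\T^2$; because $X$ is compact and the relevant sheaves are coherent, both $\T^1$ and $\T^2$ are \emph{finite-dimensional}. One then splits off the finite-dimensional part, applying the implicit function theorem in the Banach analytic category to eliminate the infinite-dimensional directions and to produce a finite-dimensional Kuranishi map $k \colon \T^1 \to \T^2$ with $dk(0)=0$, whose germ of zero locus $(T,t_0):=k^{-1}(0)$ carries the semiuniversal family; uniqueness of $(T,t_0)$ up to isomorphism and semiuniversality then follow by the usual formal comparison of Kodaira--Spencer maps, exactly as in the smooth case.

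I expect the main obstacle to be precisely this analytic reduction to finite dimensions: for manifolds it rests on Hodge theory on the compact manifold, which is not available for a singular $X$, so one must instead exploit coherence together with the theory of privileged neighborhoods and the Banach analytic implicit function theorem to guarantee both convergence of the formal solution and the semiuniversal property. This is the substance of Grauert's theorem, and I would quote it rather than reprove it.

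For the last two assertions I would compute $\T^1$ and $\T^2$ through the cotangent complex $\LL_X$ (Lichtenbaum--Schlessinger), for which $\T^i = \Ext^i_{\hol_X}(\LL_X, \hol_X)$ and $\mathcal{H}_0(\LL_X)=\Omega^1_X$. The truncation $\LL_X \to \Omega^1_X$ induces natural comparison maps
$$\Ext^i(\Omega^1_X, \hol_X) \ra \T^i,$$
and the plan is to show these are isomorphisms in the stated ranges. In the local complete intersection case $\LL_X$ is a perfect two-term complex: the conormal sequence
$$0 \ra I/I^2 \ra \Omega^1_P \otimes \hol_X \ra \Omega^1_X \ra 0$$
is a locally free resolution of $\Omega^1_X$ of length one (here $I/I^2$ is locally free and the left map is injective), so the local cotangent sheaves coincide with $\mathcal{E}xt^i(\Omega^1_X, \hol_X)$ and vanish for $i \geq 2$. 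Comparing the two local-to-global spectral sequences
$$H^p(X, \mathcal{E}xt^q(\Omega^1_X, \hol_X)) \Rightarrow \Ext^{p+q}(\Omega^1_X, \hol_X)$$
then yields $\T^1 = \Ext^1(\Omega^1_X, \hol_X)$ and $\T^2 = \Ext^2(\Omega^1_X, \hol_X)$.

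Finally, for the reduced case in degree one, the obstruction to the comparison map being an isomorphism is measured by homomorphisms out of the higher cotangent homology $\mathcal{H}_1(\LL_X)$: since $\LL_X$ restricts to $\Omega^1_X$ on the smooth locus, $\mathcal{H}_1(\LL_X)$ is supported on $\Sing(X)$, which has codimension $\geq 1$. As $X$ is reduced, $\hol_X$ is torsion-free, so the sheaf $\mathcal{H}om(\mathcal{H}_1(\LL_X), \hol_X)$ vanishes, and this forces $\Ext^1(\Omega^1_X, \hol_X) \xrightarrow{\sim} \T^1$. The same torsion argument does not control the degree-two comparison, which is exactly why $\T^2 = \Ext^2(\Omega^1_X, \hol_X)$ is claimed only under the stronger local complete intersection hypothesis.
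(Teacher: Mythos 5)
The paper offers no proof of this theorem: it is quoted as ``the generalization due to Grauert of Kuranishi's theorem,'' with references to Grauert's paper for the analytic existence statement and to Sernesi's book for the identification of $\T^1$ and $\T^2$ (the local variant is handled the same way, with the explicit remark ``for the last assertion, see \cite{sernesi}, prop.\ 3.1.14''). Your proposal is therefore not in conflict with anything in the text; it fills in what the citations stand for. Your description of the analytic core --- no Hodge theory available, so one works in the Banach analytic category, uses coherence to cut the problem down to a finite-dimensional Kuranishi map $k\colon \T^1 \to \T^2$ with $dk(0)=0$, and then argues semiuniversality by comparison of Kodaira--Spencer maps --- is an accurate account of Grauert's method, and you are right that the honest course is to quote it rather than reprove it. Your cotangent-complex computation of the last two assertions is the standard one and is correct where it matters: the low-degree exact sequence gives $0 \to \Ext^1(\Omega^1_X,\hol_X) \to \T^1 \to \Hom(\mathcal H_1(\LL_X),\hol_X)$, and for $X$ reduced the last group vanishes because $\mathcal H_1(\LL_X)$ is concentrated on the nowhere dense singular locus.

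One point deserves a flag. In the l.c.i.\ case you assert that the conormal map $I/I^2 \to \Omega^1_P\otimes\hol_X$ is injective, so that the two-term cotangent complex is an actual resolution of $\Omega^1_X$. That injectivity is automatic when $X$ is reduced (both sheaves are then locally free, resp.\ torsion-free, and the map is an isomorphism generically), but it can fail for non-reduced l.c.i.\ spaces --- already for the fat point $\mathrm{Spec}\,\CC[x]/(x^2)$ one has $\T^1 \cong \CC$ while $\Ext^1(\Omega^1_X,\hol_X)=0$. So your argument really proves the l.c.i.\ identifications under the additional hypothesis that $X$ is generically reduced; since the theorem as stated in the paper carries the same imprecision, and every application in the paper is to reduced spaces (canonical models of surfaces), this is a defect of the statement rather than of your proof, but you should either add the reducedness hypothesis or justify the injectivity where you invoke it.
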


Recall once more that this result reproves the theorem of Kuranishi (\cite{kur1}, \cite{kur2}), which  dealt with the 
case of compact
complex manifolds, where  $\T ^j = {\rm Ext }^j (\Omega^1_X,
\hol_X ) \cong  H^j (X, \Theta_X)$,  $\Theta_X : = \sH om ( \Omega^1_X,
\hol_X ) $ being  the sheaf of holomorphic vector fields.

There is also the local variant, concerning isolated singularities, which was obtained by Grauert in \cite{grauert1} extending
the  earlier result by Tyurina in the unobstructed case where   $  {\sE xt }^2 (\Omega^1_X, \hol_X )_{x_0} = 0$ 
(\cite{tyurina1}).

\begin{theo}
{\bf Grauert' s theorem for deformations of isolated singularities..} Let $(X, x_0)$
be the germ of an isolated singularity of a reduced complex space:
then

I) there is a semiuniversal deformation $ \pi : (\X, X_0, x_0) \ra (\CC^n, 0) \times (T, t_0)$ of
$(X, x_0)$, i.e., a deformation such that every other small deformation
$ \pi' : (\X', X'_0, x'_0) \ra  (\CC^n, 0) \times (T', t'_0)$ is the pull-back of $\pi$ for
an appropriate morphism $f : (T', t'_0) \ra (T, t_0)$ whose
differential at $t'_0$ is uniquely determined.

II) $(T, t_0)$ is unique up to isomorphism, and is a germ of analytic
subspace of the
vector space
$\T ^1_{x_0}: = {\sE xt }^1 (\Omega^1_X, \hol_X )_{x_0},$ inverse image of the origin under a local
holomorphic map (called Kuranishi map and denoted by $k$) 
$$ k :
{\rm \T ^1_{x_0}} = {\sE xt }^1 (\Omega^1_X, \hol_X )_{x_0}  \ra \T ^2_{x_0} $$  to the finite dimensional vector space $\T ^2_{x_0}$ (called {\bf obstruction space}), and  whose differential vanishes
at the origin (the point corresponding to the point $t_0$).

The obstruction space  $ \T ^2_{x_0} $  equals  $  {\sE xt }^2 (\Omega^1_X, \hol_X )_{x_0}$ if the  singularity of $X$ is normal.
\end{theo}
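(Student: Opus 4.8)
The plan is to prove this in three stages, reflecting the three genuinely different inputs: (i) produce a \emph{formal} semiuniversal deformation by abstract (Schlessinger) deformation theory, (ii) upgrade it to an honest analytic germ — this is Grauert's hard analytic heart — and (iii) identify the linear invariants $\T^1_{x_0}$ and $\T^2_{x_0}$ with the asserted $\sE xt$-groups. The homological identifications are simply the stalk-local counterparts of those already recorded for compact spaces in Grauert's Kuranishi-type theorem above, so the genuinely new local content is the treatment of the obstruction space under normality. Throughout I fix a local embedding $X = V(I) \subset (\CC^n,0)$ realizing the embedding dimension, and build the deformation as an embedded family $\X \subset (\CC^n,0) \times (T,t_0)$, flat over $(T,t_0)$; this is exactly the role of the smooth factor $(\CC^n,0)$ appearing in the statement.

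For the formal part I would introduce the deformation functor $\Def_{(X,x_0)}$ on local Artinian $\CC$-algebras, sending $A$ to isomorphism classes of flat deformations of the germ over $\mathrm{Spec}\,A$, and verify Schlessinger's conditions together with finite-dimensionality of the tangent space. The finiteness is exactly where the isolatedness of the singularity enters: the relevant cotangent cohomology is computed by $\sE xt$-sheaves supported at $x_0$, hence has finite-dimensional stalks. Schlessinger's theorem then yields a formal hull with tangent space the first-order deformations $\T^1_{x_0}$ and obstructions landing in the second cotangent cohomology $\T^2_{x_0}$. The step I expect to be the main obstacle is convergence: one must show this formal family is induced by an actual analytic germ. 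This is Grauert's contribution, carried out by a Banach-analytic implicit-function argument modelled directly on Kuranishi's construction described earlier — one writes the flatness/integrability condition as a power-series equation in suitable Sobolev or Banach norms and solves it — or, alternatively, by invoking Artin's approximation theorem to algebraize the formal solution. Either route is itself a substantial theorem. Semiuniversality, and uniqueness of $(T,t_0)$ up to non-canonical isomorphism, then follow formally from versality together with the rigidity of the semiuniversal tangent map.

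It remains to identify the linear data. For $\T^1_{x_0}$ I use the conormal sequence
$$ I/I^2 \xrightarrow{\ d\ } \Omega^1_{\CC^n}\otimes\hol_X \longrightarrow \Omega^1_X \longrightarrow 0 ;$$
reducedness of $X$ forces $d$ to be (generically, hence here) injective, so applying $\sH om(-,\hol_X)$ and using that $\Omega^1_{\CC^n}\otimes\hol_X$ is free gives the canonical isomorphism $\T^1_{x_0}\cong\sE xt^1(\Omega^1_X,\hol_X)_{x_0}$, exactly the reduced case of the global theorem. For the obstruction space the point is that $\T^2_{x_0}$ is the second cotangent cohomology, which agrees with $\sE xt^2(\Omega^1_X,\hol_X)_{x_0}$ only after correcting for the failure of $X$ to be a local complete intersection. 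Concretely, the cotangent complex $\LL_X$ satisfies $H^0(\LL_X)=\Omega^1_X$ but has a possibly nonzero first cotangent homology $\mathcal{H}:=H^{-1}(\LL_X)$, and the hypercohomology spectral sequence of $\sR\!\sH om(\LL_X,\hol_X)$ places the comparison map $\sE xt^2(\Omega^1_X,\hol_X)\to\T^2_{x_0}$ in an exact sequence whose error terms are $\sH om(\mathcal{H},\hol_X)$ and $\sE xt^1(\mathcal{H},\hol_X)$.

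The final step is the normality refinement. Since $X$ is smooth away from $x_0$ it is a local complete intersection there, so $\mathcal{H}$ is supported at the single point $x_0$. Normality forces $\mathrm{depth}_{x_0}\hol_X\geq 2$ (Serre's $S_2$, using that an isolated normal singularity has dimension $\geq 2$); by the standard grade/depth vanishing, a finitely generated module supported at a point has $\sE xt^i(-,\hol_X)=0$ for $i<\mathrm{depth}$, whence $\sH om(\mathcal{H},\hol_X)=\sE xt^1(\mathcal{H},\hol_X)=0$. The error terms vanish and the comparison map becomes the desired isomorphism $\T^2_{x_0}\cong\sE xt^2(\Omega^1_X,\hol_X)_{x_0}$. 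I emphasize that the analytic convergence of Stage (ii) is the real difficulty; the identifications of Stage (iii), though technical, are formal consequences of the cotangent-complex formalism and the depth estimate supplied by normality.
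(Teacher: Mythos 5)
First, a point of comparison: the paper does not prove this statement at all --- it is quoted from Grauert \cite{grauert1} (extending Tyurina \cite{tyurina1}), and for the final assertion on the obstruction space the text simply refers to \cite{sernesi}, Prop.~3.1.14. In particular your stage (ii), the convergence of the formal semiuniversal family, is not a step one can dispatch by ``a Banach-analytic implicit-function argument or Artin approximation'': it \emph{is} the theorem being stated, and at that point your proposal cites the result rather than proving it. (Note also that plain Artin approximation does not by itself yield a convergent semiuniversal deformation; one needs Grauert's division techniques or Artin's algebraization theorem.) Your stages (i) and (iii) are the standard formal bookkeeping and do follow the route taken in the references, so the comparison there is fair.

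Within the part you actually argue there is one genuine error and one omission. The error: reducedness of $X$ does \emph{not} force $d \colon I/I^2 \ra \Omega^1_{\CC^n}\otimes\hol_X$ to be injective --- generic injectivity of a map of coherent modules does not imply injectivity, and the conormal sequence can fail to be left exact for reduced germs that are not local complete intersections. What is true, and what the identification $\T^1_{x_0}\cong \sE xt^1(\Omega^1_X,\hol_X)_{x_0}$ actually requires, is that $K:=\ker d$ is supported on $\Sing X$, hence is a torsion $\hol_X$-module, so $\sH om(K,\hol_X)=0$ when $X$ is reduced; then $\sH om(I/I^2,\hol_X)\cong\sH om(d(I/I^2),\hol_X)$ and the two relevant cokernels coincide. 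The omission: in comparing $\sE xt^2(\Omega^1_X,\hol_X)$ with $\T^2_{x_0}=\Ext^2(\LL_X,\hol_X)$, the long exact sequence of the triangle $\tau_{\le -1}\LL_X \ra \LL_X \ra \Omega^1_X$ produces, besides $\sH om(H^{-1}\LL_X,\hol_X)$ and $\sE xt^1(H^{-1}\LL_X,\hol_X)$, a third error term $\sH om(H^{-2}\LL_X,\hol_X)$. Fortunately every $H^{-i}\LL_X$ with $i\ge 1$ is supported at $x_0$ because $X\setminus\{x_0\}$ is smooth, and your depth~$\ge 2$ argument from normality kills all three terms at once, so the conclusion of your stage (iii) stands once this bookkeeping is corrected.
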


For the last assertion, see \cite{sernesi}, prop. 3.1.14, page 114.

The case of complete intersection singularities was shown quite generally to be unobstructed
by Tyurina in the hypersurface case (\cite{tyurina1}), and then by Kas-Schlessinger in \cite{k-s}.

This case lends itself to a very explicit description.

Let $(X,0)\subset \CC^n$ be the complete intersection $ f^{-1} (0)$, where $$ f = (f_1, \dots , f_p) \colon (\CC^n,0) 
\ra (\CC^p,0).$$

Then the ideal sheaf $\sI _X$ of $X$ is generated by $(f_1, \dots , f_p)$ and the conormal sheaf 
$\sN ^{\vee}_X : = \sI _X / \sI _X^2 $ is locally free of
rank $p$ on $X$.

Dualizing the exact sequence 

$$ 0 \ra  \sN ^{\vee}_X \cong  \hol_X^p \ra \Omega^1_{\CC^n} \otimes \hol_X   \cong  \hol_X^n  \ra \Omega^1_X \ra 0 $$
we obtain (as $ \Theta_X : = \sH om ( \Omega^1_X, \hol_X)$)
$$ 0 \ra \Theta_X  \ra \Theta_{\CC^n} \otimes \hol_X   \cong  \hol_X^n  \ra  \sN_X \cong  \hol_X^p \ra
 \sE xt^1 ( \Omega^1_X, \hol_X) \ra  0 $$
 
 which represents $\T ^1_{0}: = {\sE xt }^1 (\Omega^1_X, \hol_X )_{0}$ as a quotient of $ \hol_{X,0}^p$,
 and as a finite dimensional vector space (whose dimension will be denoted as usual by $\tau$, which is the so called Tyurina number).

 Let $(g^1, \dots,g^{\tau}) \in  \hol_{X,0}^p$, $g^i =  (g^i_1, \dots, g^i_p) $ represent a basis of $\T ^1_{0}$.
 
 Consider now the complete intersection
 $$ (\mathfrak X , 0) : = V (F_1, \dots, F_p) \subset   (\CC^n  \times  \CC^{\tau},0)$$
 where
  $$ F_j (x,t) : = f_j (x) + \sum_{i=1}^{ \tau} t_i  g^i_j(x).$$ 
  Then
  $$ \xymatrix {  (X, 0)  \ar@{^{(}->}[r]^i  &(\mathfrak X , 0) \ar[r]^\phi & (\CC^{\tau},0)}$$
 where $i$ is the inclusion and $\phi$ is the projection,
 yields  the semiuniversal deformation of $ (X,0)$.

\smallskip

In the case $p=1$ of hypersurfaces the  above representation of $\T ^1_{0}: = {\sE xt }^1 (\Omega^1_X, \hol_X )_{0}$ as a quotient of $ \hol_{X,0}$
yields the well known formula:
$$ \T ^1_{0} =   \hol_{\CC^n,0} / (f, f_{x_1}, \dots,  f_{x_n}),$$
where $ f_{x_i} : = \frac{\partial f}{\partial x_i}.$
 
 The easiest example is then the one of an ordinary quadratic singularity, or node, where we have $p=1$, and $ f =  \sum_{i=1, \dots n} x_i^2$. 
 
 Then our module $\T ^1_{0} = \hol_{\CC^n, 0}/ (x_i)$ and the deformation is
 $$ f + t =  \sum_{i=1}^n x_i^2 + t = 0.  $$

\subsection{ Atiyah's example and three of its implications}

Around 1958 Atiyah (\cite{atiyah}) made a very important discovery concerning families of surfaces acquiring
ordinary double points. His result was later extended by Brieskorn and Tyurina (\cite{tju}, \cite{brieskorn2}, \cite{nice}) to the more general case of
rational double points, which are the rational hypersurface singularities, and which are referred to as RDP's or
as Du Val singularities (Patrick Du Val classified them as the surface singularities which do not impose adjunction conditions,
see\cite{duval}, \cite{artin}, \cite{reid1},  \cite{reid2})) or as Kleinian singularities (they are analytically isomorphic to a quotient $\CC^2 / G$,
with $ G \subset SL ( 2 , \CC)$).

The crucial  part of the story takes place at the local level, i.e., when one deforms the ordinary double point singularity
$$ X = \{ (u,v,w ) \in \CC^3  |  w^2
= uv \} .$$

In this case the semiuniversal deformation is, as we saw, the family 
$$\XX = \{ (u,v,w,t ) \in \CC^4|  w^2
- t = uv \,\}$$
mapping to $\CC$ via the projection over the variable $t$; and one observes here that $\XX \cong \CC^3$.

The minimal resolution of $X$ is obtained blowing up the origin, but we cannot put the minimal resolutions of the $X_t$
together.

One can give two reasons for this fact. The first is algebro geometrical, in that 
any normal modification of $\XX$ which is an isomorphism outside the origin,
and is such that the fibre over the origin has dimension at most 1, must be necessarily an isomorphism.

The second reason is that  the restriction of the family (of manifolds with boundary)  to the punctured disk $\{ t \neq 0 \}$ is not topologically trivial, its monodromy
being given by a Dehn twist around the vanishing two dimensional sphere (see\cite{milnor}). 

As a matter of fact the square of the Dehn twist is
differentiably isotopic  to the identity, as it is shown by the fact that  
the family $X_t$ admits
a simultaneous resolution after that we perform a base change
$$ t = \tau^2 \Rightarrow  w^2 - \tau^2 = uv.$$

\begin{defin}
Let $\XX \ra T'$ be the family where $$\XX = \{ (u,v,w,\tau ) |  w^2
- \tau^2 = uv \}$$
and $T'$ is the affine line with coordinate $\tau$.

$\XX $ has an isolated ordinary quadratic singularity which can be resolved either by blowing up
the origin (in this way we get an exceptional divisor $ \cong \PP^1 \times \PP^1$)
or by taking the closure of one of two distinct rational maps to $\PP^1$.
The two latter resolutions are called the {\bf small resolutions}.

One defines $\sS \subset \XX \times \PP^1$ to be one of the small
resolutions of $\XX$,
and $\sS'$ to be the other one, namely:

$$\sS : \{ (u,v,w,\tau)(\xi) \in  \XX \times \PP^1| \
\frac{w-\tau}{u} =   \frac{v}{w+\tau} = \xi \}$$
$$\sS' : \{ (u,v,w,\tau)(\eta) \in  \XX \times \PP^1| \
\frac{w+\tau}{u} =   \frac{v}{w-\tau} = \eta \}.$$
\end{defin}

Now, the  two families on the disk 
$\{ \tau \in \CC| | \tau| < \epsilon \}$ are clearly isomorphic by the automorphism 
 $\sigma_4$ such that $\sigma_4(u,v,w,\tau) = (u,v,w,- \tau) $,
 
 On the other hand,   the restrictions of the  two families to the punctured disk
$\{ \tau \neq 0\}$ are clearly isomorphic by the automorphism acting 
as the identity on the variables  $(u,v,w,\tau)  $, since over the punctured disk
these two families coincide with the family $\XX$.

This  automorphism yields a birational map $\iota : \sS \dasharrow \sS'$ which however does not extend biregularly,
since $\xi   u = v  \eta ^{-1}$.

The automorphism  $\sigma : = \sigma_4 \circ \iota $ acts on the restriction $\sS^*$ of the family
$\sS$ to the punctured disk, and it acts on the given differentiably trivialized
family $\sS^*$ of manifolds with boundary via the Dehn twist on the vanishing 2-sphere.

For $\tau = 0$ the Dehn twist cannot yield a holomorphic map $\phi \colon S_0 \ra S_0$,
since every biholomorphism $\phi$ sends the (-2)-curve $E$ to itself ($E$ is the only holomorphic curve in its
homology class), hence it acts on the  normal bundle of $E$ by scalar multiplication, 
therefore by an action which is homotopic to the identity in a neighbourhood of $E$:
a contradiction.

From the above observations, one can derive several `moral' consequences, when one globalizes
the procedure.

Assume now that we have a family of compact algebraic  surfaces $X_t$ such that $X_t$
is smooth for $ t \neq 0$, and, for $t =0$, it acquires a node.

We can then take the corresponding families $S_{\tau}$ and  $S'_{\tau}$ of smooth surfaces.

We can view the family $S_{\tau}$ as the image of a 1 dimensional complex disk in the Teichm\"uller
space $\sT(S_0)$ of $S_0$, and then the Dehn twist $\sigma$ yields a self map 
$$\sigma^* \colon \sT(S_0) \ra \sT(S_0).$$

It has the property that $\sigma^* (S_{\tau}) = S_{- \tau}$ for  $\tau \neq 0$, but for
$\tau = 0$, we have that  $\sigma^* (S_{0}) \neq S_0$, since  a map homotopically
equivalent to the  Dehn twist cannot yield a biholomorphic map.

Hence we get two different points of  $\sT(S_0)$, namely, $\sigma^* (S_{0}) \neq S_0$,
which are both limits  $ lim_{\tau \ra 0}  \sigma^* (S_{\tau}) =  lim_{\tau \ra 0}  S_{- \tau}$
and the conclusion is the following theorem, which is a slightly different version of  a  result of
Burns and Rapoport (\cite{b-r}).

\begin{theo}\label{nonseparated}
Let $S_0$ be a compact complex surface which contains a (-2)-curve $E$, i.e., a smooth rational curve
with self intersection equal to $-2$, obtained from the resolution of a normal surface
$X_0$ with exactly one  singular point , which is an ordinary quadratic  singularity.

Assume further that $X_0$ admits a smoothing deformation.  

Then the Teichm\"uller space   $\sT(S_0)$ is not separated.

\end{theo}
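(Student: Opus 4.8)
The plan is to globalize the local Atiyah picture that has just been set up and to exhibit two distinct points of $\sT(S_0)$ that are both limits of the same family, so that $\sT(S_0)$ fails the Hausdorff separation axiom at those two points. First I would assemble the global smoothing: by hypothesis $X_0$ is a normal surface with a single ordinary quadratic singular point admitting a smoothing deformation, so there is a family $X_t$ of compact surfaces with $X_t$ smooth for $t\neq 0$ and $X_0$ acquiring the node. The key is that, because the semiuniversal deformation of the node is the one described above (the family $w^2-t=uv$), the local structure of this global smoothing is, after the base change $t=\tau^2$, modeled exactly on the family $\XX=\{w^2-\tau^2=uv\}$; I would then perform the simultaneous resolution locally and glue it to the fixed resolution $S_0$ of $X_0$ away from the node, producing a family $S_\tau$ of smooth compact surfaces over the $\tau$-disk with central fibre $S_0$ containing the $(-2)$-curve $E$.

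Next I would transport the local automorphism $\sigma=\sigma_4\circ\iota$ to the global setting. Since $\sigma$ is supported near $E$ and restricts over the punctured disk to a diffeomorphism acting as the Dehn twist on the vanishing $2$-sphere, it yields a self-map $\sigma^*\colon \sT(S_0)\ra\sT(S_0)$ of Teichm\"uller space with the property that $\sigma^*(S_\tau)=S_{-\tau}$ for $\tau\neq 0$. The two continuous arcs $\tau\mapsto S_\tau$ and $\tau\mapsto S_{-\tau}=\sigma^*(S_\tau)$ therefore share all their values over the punctured disk and have the common limits $\lim_{\tau\ra 0}S_\tau$ and $\lim_{\tau\ra 0}S_{-\tau}$ as $\tau\ra 0$. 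The crucial point, already argued locally above, is that $\sigma^*(S_0)\neq S_0$ in $\sT(S_0)$: a map differentiably isotopic to the Dehn twist cannot be realized by a biholomorphism of $S_0$, because any biholomorphism must send the $(-2)$-curve $E$ to itself and hence act on its normal bundle by a scalar, acting homotopically trivially near $E$, contradicting the nontrivial Dehn-twist action. Thus $S_0$ and $\sigma^*(S_0)$ are genuinely distinct points of $\sT(S_0)$.

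To finish I would observe that these two distinct points are both limits of the single family $\{S_{-\tau}\}_{\tau\neq 0}$: indeed $S_{-\tau}\ra S_0$ on one hand (as $\tau\ra 0$ through the original parametrization) and, applying $\sigma^*$ which is continuous, $S_{-\tau}=\sigma^*(S_\tau)\ra \sigma^*(S_0)$ on the other. Hence any two neighbourhoods of $S_0$ and of $\sigma^*(S_0)$ in $\sT(S_0)$ must both contain $S_{-\tau}$ for all small $\tau\neq 0$, so they intersect, and $\sT(S_0)$ cannot be separated at these points.

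I expect the main obstacle to be the careful justification that $\sigma^*$ descends to a well-defined continuous self-map of Teichm\"uller space and that the limits are genuinely taken in $\sT(S_0)$ rather than merely in the space of complex structures before quotienting by $\sD iff^0$. Concretely, one must check that the Dehn twist, being differentiably isotopic to the identity after squaring, represents a class allowing $\sigma$ to act on $\sT(S_0)=\sC(S_0)/\sD iff^0(S_0)$, and that the two limits remain distinct \emph{after} dividing by $\sD iff^0(S_0)$ — this is exactly the content of the non-realizability argument for the Dehn twist by a biholomorphism, which I would treat as the technical heart of the proof. The remaining gluing and limit computations are routine once this separation of the two limit points is secured.
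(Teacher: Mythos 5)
Your proposal is correct and follows essentially the same route as the paper: globalize the local Atiyah family via the base change $t=\tau^2$ and simultaneous resolution, use the Dehn twist to produce $\sigma^*\colon \sT(S_0)\ra\sT(S_0)$ with $\sigma^*(S_\tau)=S_{-\tau}$ for $\tau\neq 0$ but $\sigma^*(S_0)\neq S_0$ (by the argument that a biholomorphism must preserve the $(-2)$-curve $E$ and act homotopically trivially on its normal bundle), and conclude that the two distinct points are limits of the same arc. You also correctly isolate the non-realizability of the Dehn twist by a biholomorphism as the technical heart, exactly as in the paper.
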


That such a surface exists is obvious: it suffices, for each degree $ d \geq 2$, to consider a surface $X_0$
in $ \PP^3$, with equation $ f_0 (x_0, x_1,x_2,x_3) = 0$, and such that there is no monomial divisible by $x_0^{d-1}$ 
appearing in $f_0$ with non zero coefficient.
The required smoothing is gotten by setting $X_t : = \{ f_t : = f_0 + t x_0^d = 0 \}$.

This example can of course be interpreted in a second way, and with a completely different wording (non separatedness of some 
Artin moduli stack), which I will try to briefly explain in a concrete way.

It is clear that  $\sigma^* (S_{0}) \neq S_0$ in Teichm\"uller space, but  $\sigma^* (S_{0})$ and  $ S_0$
yield the same point in the moduli space. 

 Think of the family $S_{\tau}$ as  a 1 dimensional complex disk in the Kuranishi space of $S_0$: then
 when we map this disk to the moduli space we have two isomorphic surfaces, namely, since
  $\sigma^* (S_{\tau}) = S_{- \tau}$ for  $\tau \neq 0$, we identify the point $\tau$ with the point $-\tau$.
  
If we consider a disk $\De $, then we get an equivalence relation in $\De \times \De$ which identifies
 $\tau$ with the point $-\tau$. We do not need to say that $\tau = 0$ is equivalent to itself,
 because this is self evident. However, we have seen that we cannot extend the self map $\sigma$
 of the family $\sS^*$ to the full family $\sS$. Therefore, if we require that equivalences come from families,
 or, in other words, when we glue Kuranishi families, we obtain the following.
 
 The equivalence relation in  $\De \times \De$ is the image of  two complex curves, one being the disk
 $\De$, the other being the punctured disk $\De^*$.
 
  $\De$ maps to the diagonal $\De \times \De$, i.e., $ \tau \mapsto (\tau, \tau)$, while
  the punctured disk $\De^*$ maps to the antidiagonal, deprived of the origin,
  that is,$ \tau \neq 0, \tau  \mapsto (\tau, - \tau)$. 
 
 The quotient  in the category of complex spaces is indifferent to the fact that we cannot have a family
 extending the isomorphism $\iota$ given previously across $\tau = 0$, and the quotient is the disk $\De_t$
 with coordinate $ t : = \tau^2$.
 
 But over the disk $\De_t$ there will not be, as already remarked, a family of smooth surfaces.
 
 This example by Atiyah motivated Artin in \cite{artinstacks} to introduce his theory of Artin stacks,
 where one takes quotients by maps which are \'etale on both factors, but not proper
 ( as the map of  $\De^*$ into  $\De \times \De$).
 
A third implication of Atiyah's example will show up in the section on automorphisms.

 \bigskip

\section{Moduli spaces for surfaces of general type}

\subsection{Canonical models of surfaces of general type.}

In the birational class of a non ruled surface there is, by the theorem of Castelnuovo (see e.g. \cite{arcata}),
a unique (up to isomorphism) minimal model $S$. 

We shall assume from now on  that $S$ is a smooth minimal (projective) surface of general
type: this is equivalent (see \cite{arcata}) to the two conditions:

(*) $K_S^2 > 0$ and
$K_S$
   is nef 
   
   (we recall that a divisor $D$ is said to be
{\bf nef} if, for each irreducible curve $C$, we have $ D \cdot C \geq 0$).

It is very important that, as shown by Kodaira in \cite{kod-1}, the class of non  minimal surfaces is stable by small deformation;
on the other hand, a small deformation of a minimal algebraic surface of general type is again minimal (see prop. 5.5 of \cite{bpv}).
Therefore, the class of minimal algebraic surfaces of general type is stable by deformation in the large.

Even if the canonical divisor $K_S$ is nef, it does not however need to  be an ample divisor, indeed 

{\em  The canonical divisor $K_S$ of a minimal surface of general type $S$
  is ample iff
there does not exist an irreducible curve $C$ ($\neq 0$) on $S$ with $K\cdot
C=0$ $\Leftrightarrow $ there is no (-2)-curve $C$ on $S$, i.e., a curve such  that $C\cong \PP^1$, and $C^2=-2$ .}

The number of (-2)-curves  is bounded by  the rank of the Neron Severi lattice 
$NS(S)$ of $S$, and these curves can be contracted by a contraction $\pi \colon S \ra X$, where $X$ is  a normal surface which is
called the
{\bf canonical model} of $S$.

The singularities of $X$ are exactly  Rational Double Points  (in the terminology of \cite{artin}), also called  Du Val or Kleinian singularities, and $X$ is Gorenstein
with canonical divisor $K_X$ such that $ \pi^* ( K_X) = K_S$.

The canonical model
is  directly obtained from the  $5$-th pluricanonical map of $S$,
but it is abstractly defined as  the Projective Spectrum (set of homogeneous
prime ideals) of the canonical ring $$\sR (S) : =(\sR(S,K_S)) : =  \bigoplus_{m \geq 0}H^0 (\hol_S (mK_S).$$

 In fact if $S$ is a surface of general type the canonical ring
$\sR(S)$ is a graded $\CC$-algebra of finite
type (as first proven by Mumford in \cite{mum1}), and then
 the canonical model is    $X=\mbox{\rm Proj}(\sR(S,K_S))= \mbox{\rm Proj}(\sR(X,K_X))$.

By choosing a minimal homogeneous set of generators of $\sR(S)$ of degrees $d_1, \dots, d_r$ one obtains a natural embedding
of the canonical model $X$ into a weighted projective space (see\cite{dolg}). This is however not convenient in order to apply Geometric Invariant Theory, since one has then to divide by non reductive groups, unlike the case of pluricanonical maps, which we now discuss.

In this context the following is the content of the  theorem of Bombieri (\cite{bom}), which shows with a very effective estimate  the boundedness
of the family of surfaces of general type with fixed invariants $K^2_S$ and $ \chi (S) := \chi (\hol_S)$.

\begin{theo}{\bf (Bombieri)}
   Let $S$ be a minimal surface of general type,
and consider the linear system $|mK_S|$ for $m\ge 5$, or for $m=4$
when $K^2_S \ge 2$.

Then $|mK_S|$ yields a birational morphism $\varphi_m$ onto its image,
called the m-th pluricanonical map of $S$, 
which factors through the canonical model $X$ as $\varphi_m = \psi_m \circ \pi$,
and where $\psi_m$ is  the m-th pluricanonical map of $X$, associated to the   linear system $|mK_X|$, and gives an embedding of the canonical model 
$$  \psi_m \colon X \ra \cong X_m \subset \PP H^0 (\hol_X (mK_X))^{\vee} = 
 \PP H^0 (\hol_S (mK_S))^{\vee} .$$
 \end{theo}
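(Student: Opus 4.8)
The plan is to reduce everything to the canonical model and then establish very ampleness of $mK_X$ by a vanishing-and-separation argument. First I would record that the rational double points of $X$ are rational singularities, so that $\pi_* \hol_S = \hol_X$ and $R^1\pi_* \hol_S = 0$, while $K_X$ is Cartier (since $X$ is Gorenstein) with $\pi^*(K_X) = K_S$. The projection formula then gives $\pi_*\hol_S(mK_S) = \hol_X(mK_X)$, hence a canonical identification $H^0(\hol_S(mK_S)) = H^0(\hol_X(mK_X))$ for every $m$. Consequently the rational map $\varphi_m$ is constant exactly on the curves contracted by $\pi$ — the $(-2)$-curves, on which $mK_S$ has degree $0$ — and factors as $\varphi_m = \psi_m \circ \pi$ for a well-defined $\psi_m$ on $X$. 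Birationality of $\varphi_m$ onto its image then follows once $\psi_m$ is an embedding, since $\pi$ is birational; so it suffices to prove that $\psi_m$ is a closed embedding, i.e. that $mK_X$ is very ample.

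Next I would translate very ampleness of $mK_X$ into a separation statement upstairs. A closed embedding is equivalent to base-point freeness together with separation of any two (possibly infinitely near) points. Pulled back to $S$, this means $|mK_S|$ must separate any two points not lying on a common fiber of $\pi$ and separate tangent directions away from the exceptional locus. The mechanism is the ideal-sheaf sequence: for a length-two subscheme $Z$ supported at such points, surjectivity of $H^0(\hol_S(mK_S)) \ra H^0(\hol_Z(mK_S))$ follows once $H^1(S, \hol_S(mK_S) \otimes \sI_Z) = 0$.

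To obtain this vanishing I would write $mK_S = K_S + D$ with $D := (m-1)K_S$, a nef divisor with $D^2 = (m-1)^2 K_S^2$. For $m \geq 5$ one has $D^2 \geq 16$, and for $m=4$ with $K_S^2 \geq 2$ one has $D^2 \geq 18$; in both cases $D^2 \geq 10$. Reider's theorem (a later streamlining of Bombieri's original estimates) then applies: if $|K_S + D|$ fails to separate $Z$, there is an effective divisor $E$ meeting $Z$ with $D\cdot E \in \{0,1\}$ and correspondingly small self-intersection. Since $D = (m-1)K_S$ with $m-1 \geq 3$ and $K_S$ is nef, the inequality $D\cdot E = (m-1)(K_S\cdot E) \leq 1$ forces $K_S\cdot E = 0$; by the minimality analysis of the canonical model this means $E$ is supported on $(-2)$-curves, i.e. on a fiber of $\pi$. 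Thus the only failures of separation occur precisely where $\psi_m$ is permitted to be non-injective, and $mK_X$ is very ample.

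The hard part is the verification in the borderline numerical ranges, which is exactly where Bombieri's original argument is most delicate. For large $m$ the estimate $D^2 \gg 0$ makes the vanishing comfortable, but the threshold cases — above all $m=4$ with $K_S^2 = 2$, and more generally configurations where $S$ carries a genus-two pencil or an (quasi-)elliptic pencil meeting $K_S$ minimally — must be excluded by hand. Controlling these requires classifying the possible obstruction curves $E$, together with Noether's inequality $K_S^2 \geq 2p_g - 4$ and a direct study of the associated pencils, in order to rule out any separation failure not contracted by $\pi$. This case analysis, rather than the formal cohomological framework, is the genuine content of the theorem.
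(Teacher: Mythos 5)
The paper does not prove this statement: it is quoted as Bombieri's theorem with a reference to \cite{bom}, so there is no in-text argument to compare yours against. Your route --- reduce to the canonical model via $\pi_*\hol_S(mK_S)=\hol_X(mK_X)$ and then prove very ampleness of $mK_X$ by Reider's criterion applied to $D=(m-1)K_S$ --- is the standard modern streamlining and is sound in outline: for $m\ge 5$, or $m=4$ with $K_S^2\ge 2$, one has $D^2\ge 10$, and since $m-1\ge 3$ the inequality $D\cdot E=(m-1)K_S\cdot E\le 2$ forces $K_S\cdot E=0$, so any obstructing divisor is supported on $(-2)$-curves; negative definiteness of distinct fibers of $\pi$ then rules out separation failures between points not contracted together. (Minor slip: Reider's list for separating length-two clusters also contains the case $D\cdot E=2$, $E^2=0$, which you omitted; the conclusion survives because $3K_S\cdot E\le 2$ still gives $K_S\cdot E=0$.)

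There are two genuine gaps. First, separating length-two subschemes of $S$ away from the exceptional locus is not yet the assertion that $\psi_m$ is a closed embedding of the \emph{normal surface} $X$: at a rational double point the Zariski tangent space of $X$ is three-dimensional, and you must either check that $\psi_m$ is an immersion there (which requires handling clusters supported on the exceptional fibers, not just away from them) or argue that $\psi_m$ is finite (as $K_X$ is ample), injective and birational, and that the image is normal, invoking Zariski's main theorem. Your sketch silently passes over this point. Second, your closing paragraph misattributes where the difficulty lies: the delicate classification of genus-two and (quasi-)elliptic pencils is the content of Bombieri's \emph{original} argument, which predates Reider; in the Reider-based proof you actually set up, the numerical analysis above already disposes of all cases with $m\ge 4$ and $D^2\ge 10$, and there is no residual case analysis ``to be done by hand.'' As written, the proposal therefore gestures at two different proofs without completing either.
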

 
 \subsection{ The Gieseker moduli space}
 
 The theory of deformations of complex spaces is conceptually simple but technically involved because
Kodaira, Spencer, Kuranishi, Grauert et al. had to prove the convergence of the 
power series solutions which they produced.

It is a matter of life that tori and algebraic K3 surfaces have small deformations which are not algebraic.
But there are  cases, like the case  of curves and of surfaces of general type, where
all small deformations are still projective, and then life simplifies incredibly,
since one can deal only with projective varieties or projective subschemes.

For these, the most natural parametrization, from the point of view of deformation theory,
is given by the Hilbert scheme, introduced by Grothendieck (\cite{groth}). Let us illustrate this concept through
the case of surfaces of general type.

For these, as we already wrote, the  first important consequence of the theorem on pluricanonical embeddings
is the finiteness, up to deformation, of the minimal surfaces $S$
of general type with fixed invariants  $\chi  (S) = a $ and $K^2_S = b$ .

In fact, their 5-canonical models $X_5$ are surfaces with
Rational Double Points as singularities and of degree $ 25 b$ in a fixed
projective space $ \PP^N$, where $ N + 1 = P_5: = h^0 (5 K_S) =
\chi (S)  + 10 K^2_S = a + 10 b$.

The Hilbert polynomial of $X_5$ equals
$$ P (m) :=  h^0 (5 m K_S) =
a + \frac{1}{2}( 5 m -1)  5 m  b .$$

Grothendieck (\cite{groth})  showed that there is

i) an integer $d$ and

ii) a subscheme $\HHH = \HHH_P$ of the
Grassmannian of codimension $P(d)$- subspaces of $ H^0 (\PP^N, \hol_{\PP^N} (d))$,
called Hilbert scheme, such that

iii) $\HHH $  parametrizes the degree $d$ graded pieces $ H^0
(\sI_{\Sigma}(d))$ of the homogeneous ideals of all the subschemes $\Sigma
\subset \PP^N$
having the given Hilbert polynomial $P$.

We can then talk about the Hilbert point of $\Sigma$ as the Pl\"ucker point 
$$ \Lambda^{P(d)} (r_{\Sigma}^{\vee})$$
$$ r_{\Sigma} :  H^0 (\PP^N, \hol_{\PP^N} (d)) \ra  H^0 (\Sigma, \hol_{\Sigma} (d))$$
being  the restriction homomorphism (surjective for $d$ large).

Inside $\HHH$ one has the open set
$$ \HHH ^0 : = \{ \Sigma | \Sigma {\rm \ is\ reduced \ with \ only \
R.D.P. 's \ as \
singularities }\}.
$$

This is plausible, since rational double points are  hypersurface singularities, and 
first of all the dimension of the Zariski tangent space is upper semicontinuous,
as well as the multiplicity: some more work is needed to show that the further property
of being a `rational' double point is open. The result has been extended in greater generality by Elkik
in \cite{elkik}.

One can use the following terminology (based on results of Tankeev in \cite{tankeev}).

\begin{defin}The 5-pseudo moduli space of surfaces of general type with
given invariants $ K^2$, $\chi$ is the closed subscheme 
$ \HHH _0 \subset \HHH^0$(defined by fitting ideals of the direct image of $\omega_{\Sigma}^{
\otimes 5} \otimes
   \hol_{\Sigma}(-1) $),
$$  \HHH _0 (\chi, K^2) : = \{ \Sigma \in \HHH^0| \omega_{\Sigma}^{
\otimes 5} \cong
   \hol_{\Sigma}(1)   \} $$

\end{defin}

Since $\HHH_0$ is a quasi-projective scheme, it has a finite number
of irreducible
components, called the
{\bf deformation
types } of the surfaces of general type with
given invariants $ K^2$, $\chi$.

As we shall see, the  above deformation types  of canonical models coincide with the equivalence classes for
the relation of deformation equivalence between minimal surfaces of general type.

\begin{rem}The group $ \PP GL (N+1 , \CC)$ acts on $\HHH_0$ with
finite stabilizers
(corresponding to the groups of automorphisms of each surface)
and the orbits correspond to the isomorphism classes of minimal
surfaces of general
type with invariants $ K^2$, $\chi$. 

Tankeev  in \cite{tankeev} showed that a quotient by this action exists
not only as a complex
analytic space, but also as a Deligne Mumford stack  (\cite{d-m}).
\end{rem}

Saying that the quotient is a stack is a way to remedy the fact that, over the locus of
surfaces with automorphisms, there does not exist a universal family, so we have only,
in Mumford's original terminology, a coarse and not a fine moduli space.

In a technically very involved paper (\cite{gieseker}) Gieseker showed   that, if one replaces
the 5-canonical embeddding by an $m$-canonical embedding with
much higher $m$, then the Hilbert point $ \Lambda^{P(d)} (r_{\Sigma}^{\vee})$ is a stable point;
this means that, beyond the already mentioned preperty that the stabilizer is finite,
that there are polynomial functions which are  invariant for the action of $ SL (N+1 , \CC)$
and which do not vanish at the point, so that the Hilbert point maps to a point of
the Projective spectrum of the ring of $ SL (N+1 , \CC)$-invariants.

The result of Gieseker leads then to the following

\begin{theo}{\bf (Gieseker)}
For m very large, the quotient $$ \mathfrak M_{\chi, K^2}^{can}: =  \HHH _0 (\chi, K^2)/ SL (N+1 , \CC)$$  exists as a quasi-projective  scheme.
It is independent of $m$ and called the {\bf Gieseker moduli space} of canonical models of surfaces of general type
with invariants $\chi, K^2$.
\end{theo}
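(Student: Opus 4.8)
The plan is to realize $\mathfrak M_{\chi, K^2}^{can}$ as a quotient in Geometric Invariant Theory, taking as given the hard input stated just above the theorem: for $m$ large the Hilbert point of every $\Sigma \in \HHH_0(\chi, K^2)$ is a \emph{stable} point for the linearized action of the reductive group $G := SL(N+1, \CC)$. First I would fix the GIT set-up. Via the Pl\"ucker embedding of the Grassmannian, the Hilbert points sit inside a projective space $W := \PP\bigl(\Lambda^{P(d)} H^0(\PP^N, \hol_{\PP^N}(d))\bigr)$ on which $G$ acts with an ample linearization induced from its action on $H^0(\PP^N, \hol_{\PP^N}(d))$. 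The locus $\HHH_0(\chi, K^2)$ is a $G$-invariant locally closed subscheme of $W$: it is locally closed because it is cut out inside $\HHH$ by the RDP condition (open, by the semicontinuity and openness statements recalled above) together with the closed condition $\omega_\Sigma^{\otimes m} \cong \hol_\Sigma(1)$, and it is $G$-invariant because neither condition depends on the choice of projective coordinates.

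Next I would invoke Mumford's fundamental theorem. Since $G$ is reductive and the linearization is ample, there is an open semistable locus $W^{ss} \subseteq W$ carrying a good quotient $\pi \colon W^{ss} \to W^{ss} /\!/ G$, projective over $\CC$, and an open stable locus $W^s \subseteq W^{ss}$ on which $\pi$ restricts to a \emph{geometric} quotient $W^s \to W^s/G$ that is quasi-projective and open in $W^{ss} /\!/ G$. Gieseker's stability assertion is precisely that $\HHH_0(\chi, K^2) \subseteq W^s$. Because a geometric quotient is a topological quotient map carrying invariant opens to opens and invariant closeds to closeds, the image of the invariant locally closed subscheme $\HHH_0(\chi, K^2)$ in $W^s/G$ is again locally closed, hence quasi-projective, and $\pi$ restricts on it to a geometric quotient whose stabilizers are the finite automorphism groups of the surfaces. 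This exhibits $\mathfrak M_{\chi, K^2}^{can} = \HHH_0(\chi, K^2)/G$ as a quasi-projective scheme whose closed points are exactly the isomorphism classes of canonical models with invariants $\chi, K^2$.

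For independence of $m$, I would argue that each such quotient is a coarse moduli space for the functor of families of canonical models with the given invariants (flat families with RDP fibres and $\omega$ relatively ample, of the prescribed $\chi, K^2$): re-embedding a family by the $m'$-canonical rather than the $m$-canonical map is a $G$-equivariant operation that is functorial and commutes with base change on $\HHH_0$, so it descends to a morphism between the two quotients, and the analogous construction in the other direction furnishes a two-sided inverse. Since a scheme corepresenting a fixed functor is unique up to a unique isomorphism when it exists, the quasi-projective schemes obtained for different large $m$ are canonically identified; concretely the local structure of the quotient at a point $[X]$ is that of $\frak B(X)/\Aut(X)$ read off from the Kuranishi family of the canonical model $X$, which does not involve $m$.

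The genuinely hard step, Gieseker's asymptotic stability of $m$-canonically embedded surfaces of general type, is exactly the ingredient I am permitted to assume; without it the passage from the categorical quotient $W^{ss} /\!/ G$ to an honest geometric quotient would fail. Granting it, the subtlest remaining point is the bookkeeping verification that $\HHH_0(\chi, K^2)$ lands in the \emph{stable} locus rather than merely the semistable one, so that one obtains a geometric quotient with the correct orbit-to-isomorphism-class dictionary instead of a quotient in which strictly semistable orbits would be identified.
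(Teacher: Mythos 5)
Your proposal is correct and follows essentially the same route as the paper, which treats this theorem as the combination of Gieseker's asymptotic stability of pluricanonical Hilbert points (taken as the hard input) with Mumford's GIT machinery producing a quasi-projective geometric quotient of the stable locus. Your independence-of-$m$ argument via corepresentability is likewise consistent with the paper, which establishes exactly this in the later theorem identifying $\mathfrak M_{\chi, K^2}^{can}$ as the coarse moduli space for the functor $\sS urf^{can}_{\chi, K^2}$.
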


It should be noted that at that time Gieseker only established the result for a field of characteristic zero; as he remarks in the paper,
the only thing which was missing then in characteristic $p$ was the boundedness of the surfaces of general type with given invariants $\chi, K^2$. This result was provided by Ekedahl's extension of Bombieri's theorem to characteristic $p$ (\cite{ekedahl}, see also
\cite{cf} and \cite{4auth} for a simpler proof).

\subsection{Minimal models versus canonical models}

Let us go back to the assertion  that deformation equivalence classes of minimal surfaces of general type 
are the same thing as deformation types of canonical models (a fact which is no longer true in higher dimension).

We have more precisely the following theorem.

\begin{theo}\label{can=min}

Given two minimal surfaces of general type $S, S'$ and their respective
canonical models $X, X'$, then

$S$ and $S'$ are deformation equivalent $\Leftrightarrow$ $X$ and $X'$ are
deformation equivalent.
\end{theo}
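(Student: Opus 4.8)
\emph{Proof strategy.}
The plan is first to reduce to direct deformation equivalence: since deformation equivalence is the equivalence relation it generates, I would treat one irreducible family at a time and prove the two implications separately, namely (a) from a family of minimal models I produce a family of canonical models over the same base, and (b) from a family of canonical models I produce, after a finite base change, a family of minimal models. Throughout I would use that $\chi(\hol)$ and $K^2$ are deformation invariants, so that all fibres share the same numerical invariants and the same Hilbert polynomial.

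For $\Rightarrow$, I would start from a family $p \colon \sS \ra T$ with $T$ irreducible and all fibres smooth minimal surfaces of general type, containing $S$ and $S'$. The key is to run Bombieri's theorem in families: for each $t$ the system $|5K_{S_t}|$ defines the contraction onto the canonical model $X_t$, and since $H^i(S_t, 5K_{S_t}) = 0$ for $i>0$ while $h^0(S_t,5K_{S_t}) = \chi + 10K^2$ is constant, the sheaf $p_*\om_{\sS/T}^{\otimes 5}$ is locally free and compatible with base change. I would then form the relative $5$-canonical morphism over $T$
$$ \Phi \colon \sS \ra \PP(p_*\om_{\sS/T}^{\otimes 5}), $$
and take $\X := \mathrm{im}\,\Phi \ra T$. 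Because the Hilbert polynomial $P(m) = \chi + \tfrac12(5m-1)5m\,K^2$ is constant, $\X \ra T$ is flat, and its fibre over $t$ is exactly the canonical model $X_t$. Thus $X$ and $X'$ appear as fibres of one irreducible family, giving $X \sim X'$.

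For $\Leftarrow$, I would start from a family $\X \ra T$ with $T$ irreducible whose fibres are canonical models, i.e.\ normal surfaces whose only singularities are rational double points. Here fibrewise resolution does not glue: as Atiyah's example and Theorem \ref{nonseparated} show, such a family need not admit a simultaneous resolution over $T$ itself. The remedy I would invoke is the Brieskorn--Tyurina simultaneous resolution after base change: there exists a finite surjective $T' \ra T$ such that the pullback $\X' := \X \times_T T'$ carries a simultaneous resolution $\sS' \ra \X'$ with $\sS' \ra T'$ smooth, whose fibres are the minimal resolutions of the fibres of $\X$. Since these resolutions only contract $(-2)$-curves, no $(-1)$-curve is created and the fibres are minimal surfaces of general type (minimality being preserved in the large by the cited stability result). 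Passing to an irreducible component of $T'$ dominating $T$, the fibres over the preimages of the two marked points are the minimal resolutions of $X$ and $X'$, that is $S$ and $S'$; these then lie in one irreducible family, so $S \sim S'$.

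The hard part will be the $\Leftarrow$ direction, and precisely the base change $T' \ra T$: this is where Atiyah's phenomenon enters, since the monodromy on the vanishing cycles around the discriminant locus in $T$ (where the rational double points degenerate) is in general nontrivial and is killed only after pulling back along $T'$. Globalizing the local Brieskorn--Tyurina theory over a possibly singular, higher-dimensional $T$ with an arbitrary discriminant, and verifying that $\sS' \ra T'$ is a genuine flat family of smooth minimal models, is the technical core of the argument. By contrast $\Rightarrow$ is, once Bombieri's theorem is available, essentially the relative canonical-model construction already underlying the Gieseker moduli space.
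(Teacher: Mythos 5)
Your proposal is correct and follows essentially the same route as the paper: the forward implication via the relative pluricanonical (canonical model) construction resting on Bombieri's theorem, and the reverse implication via the Brieskorn--Tjurina simultaneous resolution after a finite base change, which the paper explicitly identifies as ``the crucial point underlying the result.'' The only ingredient the paper adds is the preliminary reduction to families over a \emph{smooth one-dimensional} base (connect the two points by an irreducible curve, then normalize), which largely defuses the technical worry you raise at the end about globalizing the local simultaneous resolution over a singular, higher-dimensional $T$ with arbitrary discriminant.
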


The idea of the proof can be simplified by the elementary
 observation that, in order to analyse
deformation equivalence, one may restrict oneself to the case of families parametrized by a base $T$
with $ dim (T) = 1$: since two points in a complex space $ T \subset \CC^n$ (or in an algebraic variety)
belong to the same irreducible component of $T$ if and only if they
belong to an irreducible curve $ T ' \subset T$.
And one may further reduce to the case where  $T$ is smooth simply by taking the
normalization $ T^0 \ra T_{red} \ra T$ of the reduction $T_{red}$
of $T$, and taking the pull-back of the family  to $T^0$.

But the crucial point underlying the  result is the theorem on the 
 so-called simultaneous resolution of singularities (cf. \cite{tju},\cite{brieskorn},
\cite{brieskorn2}, \cite{nice})

\begin{theo}\label{simultaneous}
{\bf (Simultaneous resolution according to Brieskorn and Tjurina).}
Let $T : = \CC^{\tau}$ be the basis of the semiuniversal deformation
of a Rational Double Point $ (X,0)$. Then there exists a ramified Galois
cover $ T' \ra T$, with $T'$ smooth $T' \cong  \CC^{\tau}$ such that the pull-back $ \X ' : = \X \times _T T'$
admits a simultaneous resolution of singularities
$ p : \SSS' \ra \X'$ (i.e., $p$ is bimeromorphic and
all the fibres of the composition $  \SSS' \ra \X' \ra T'$
are smooth and equal, for $t'_0$, to the minimal resolution
of singularities of $ (X,0)$.
\end{theo}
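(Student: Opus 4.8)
The plan is to reduce everything to the ADE classification of rational double points and then to deform them through the Lie-theoretic description of their semiuniversal families due to Brieskorn and Grothendieck (\cite{brieskorn}, \cite{brieskorn2}); the explicit node computation carried out just above is precisely the $A_1$ instance and already exhibits both the base change $t=\tau^2$ and the simultaneous resolution, so the task is to find the right generalization. First I would recall the structural facts: every $(X,0)$ is analytically $\CC^2/G$ with $G\subset SL(2,\CC)$ finite, its dual resolution graph is a Dynkin diagram $\Delta$ of type $A_r$, $D_r$ or $E_r$, the Tyurina number equals the number $r$ of vertices, and the minimal resolution $\widetilde X\to X$ has as exceptional locus $r$ smooth $(-2)$-curves whose intersection matrix is the negative of the Cartan matrix of $\Delta$. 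In particular these curves span the root lattice $Q(\Delta)$, and $r=\tau$.

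Next I would set up the adjoint quotient. Let $\mathfrak{g}$ be the simple complex Lie algebra of type $\Delta$, with Cartan subalgebra $\mathfrak{h}$, Weyl group $W$, and adjoint quotient $\chi\colon \mathfrak{g}\ra \mathfrak{h}/W$. By Chevalley's theorem the ring of invariants is polynomial, so $\mathfrak{h}/W\cong\CC^{r}$ and the finite map $\mathfrak{h}\ra\mathfrak{h}/W$ is a ramified Galois cover with group $W$. Brieskorn's theorem identifies the restriction of $\chi$ to a transverse slice at a subregular nilpotent element with the semiuniversal deformation $\pi\colon \X\ra T\cong\CC^{\tau}$ of $(X,0)$; the mechanism behind this identification is that the slice family has base dimension $r=\tau$ and that its Kodaira--Spencer map is an isomorphism onto $\T^1_0$, so versality forces the two germs to agree.

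Finally I would produce the resolution. The Grothendieck--Springer simultaneous resolution $\widetilde{\mathfrak{g}}\ra\mathfrak{g}$, where $\widetilde{\mathfrak{g}}=\{(x,\mathfrak{b})\mid x\in\mathfrak{b}\}$ fibres over $\mathfrak{h}$ and fits into a commutative square over $\mathfrak{h}\ra\mathfrak{h}/W$, restricts over the slice to a bimeromorphic map whose composition down to $\mathfrak{h}$ has smooth fibres. Pulling back along $\mathfrak{h}\ra\mathfrak{h}/W$ then yields $T':=\mathfrak{h}\cong\CC^{r}$, the $W$-cover, together with a simultaneous resolution $p\colon \SSS'\ra\X'=\X\times_T T'$ whose fibre over $t'_0$ is exactly $\widetilde X$. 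For the type $A_r$ one can bypass the Lie theory altogether, matching the hands-on spirit of the node case: writing the deformation as $uv=\prod_{i=1}^{r+1}(z-\lambda_i)$ with $\sum_i\lambda_i=0$, the cover $T'\ra T$ is the symmetric-group cover sending $(\lambda_i)$ to the elementary symmetric functions (so $W=S_{r+1}$), and the simultaneous resolution is obtained by iterated blow-ups, reducing for $r=1$ to $uv=z^2-\lambda^2$.

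The hard part will be Brieskorn's identification of the slice family with the semiuniversal deformation: one must verify versality and the isomorphism of Kodaira--Spencer maps, and, just as delicately, check that the central fibre of the resolved family is the \emph{minimal} resolution $\widetilde X$ rather than some blow-up or partial resolution of it. This is exactly the step that genuinely uses the simplicity of $\mathfrak{g}$ and the subregularity of the chosen nilpotent; outside type $A_r$ it seems unavoidable to invoke this structure rather than to resolve by bare hands (compare \cite{tju}, \cite{nice}).
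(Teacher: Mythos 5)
Your proposal is correct, but it proves considerably more than the paper actually writes down, and by a different route. The paper states the theorem for all rational double points but then only reproduces Tjurina's explicit proof for the $A_n$ case: the semiuniversal deformation is written as $xy=z^{n+1}+a_2z^{n-1}+\dots+a_{n+1}$, the cover $T'\ra T$ is the $\SSS_{n+1}$-cover given by the splitting polynomial ($xy=\prod_j(z-\alpha_j)$, $\sum_j\alpha_j=0$), and the resolution is obtained by taking the closure of the graph of the rational maps $\phi_i=(x,\prod_{j=1}^i(z-\alpha_j))$ to $(\PP^1)^n$ --- a direct small resolution rather than the ``iterated blow-ups'' you suggest, though blowing up the non-Cartier divisors $\{x=\prod_{j\leq i}(z-\alpha_j)=0\}$ amounts to the same thing. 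For $D_r$ and $E_r$ the paper simply cites \cite{tju}, \cite{brieskorn2}, \cite{nice}, and only remarks afterwards that the Galois group is the Weyl group and that Brieskorn later found the Lie-theoretic explanation. Your argument via the adjoint quotient, the subregular slice, and the Grothendieck--Springer resolution is exactly that later explanation: it buys uniformity over all ADE types and makes the appearance of $W$ conceptual rather than accidental, at the cost of having to establish Brieskorn's identification of the slice family with the semiuniversal deformation (versality plus the Kodaira--Spencer isomorphism) and the minimality of the central fibre --- which you correctly flag as the genuinely hard step, and which the paper avoids by treating only type $A$ explicitly. Both routes are sound; yours is the one that actually proves the theorem as stated in full generality, whereas the paper's is the more elementary and self-contained illustration.
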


We reproduce Tjurina' s proof for the case of $A_n$-singularities,
observing that the case of the node was already described in the previous section.

\Proof
Assume that we have the $A_n$-singularity
$$ \{ (x,y,z) \in \CC^3 | xy = z^{n+1}  \}.$$
Then the semiuniversal deformation is given by
   $$ \X : = \{ ((x,y,z) ,(a_2, \dots a_{n+1}) ) \in \CC^3 \times \CC^n| 
xy = z^{n+1} +
a_2 z^{n-1} + \dots a_{n+1}  \} ,$$
the family corresponding to the natural deformations of the
simple cyclic covering.

We take a ramified Galois covering with group $\SSS_{n+1}$ corresponding
to the splitting polynomial of the deformed degree $n+1$ polynomial
$$\X' : =  \{ ((x,y,z), (\alpha_1, \dots \alpha_{n+1})) \in \CC^3 
\times \CC^{n+1}|
\sum_j \alpha_j = 0, \ xy = \prod_j ( z - \alpha_j)  \} .$$
One resolves the new family $\X'$ by defining
$ \phi_i : \X' \dasharrow \PP^1$ as $$ \phi_i : = (x , \prod_{j=1}^i ( z -
\alpha_j))$$
and then taking the closure of the graph of
$ \Phi : = (\phi_1, \dots \phi_n) : \X' \dasharrow (\PP^1)^n$.

\qed

Here  the Galois group $G$ of the covering $ T' \ra T$
in the above theorem is the Weyl group corresponding to the Dynkin diagram
of the singularity (whose vertices are the (-2) curves in the minimal resolution, and whose edges correspond to the intersection points). 

I.e., if  $\sG$ is the simple algebraic group
corresponding to the Dynkin diagram (see \cite{hum}), and $H$ is a
Cartan subgroup,
$N_H$ its normalizer, then the Weyl group is the factor group $W: =  N_H / H$.
For example, $A_n$ corresponds to the group $ SL(n+1, \CC)$, its Cartan subgroup
is the subgroup of diagonal matrices, which is normalized by the symmetric
group $\SSS_{n+1}$, and $N_H$ is here a semidirect product of $H$ with
$\SSS_{n+1}$.
E. Brieskorn (\cite{nice}) found later a direct
explanation of this phenomenon.

The Weyl group $W$ and  the quotient $ T = T' / W$  play a crucial role in the understanding of the relations between the deformations of
the minimal model $S$ and the canonical model $X$, which is a nice discovery by Burns and Wahl (\cite{b-w}).

But, before we do that, let us make the following important observation, saying that  the local analytic structure of the Gieseker moduli space is determined by the action of
the group of automorphisms of $X$ on the Kuranishi space of $X$.

\begin{rem}
Let $X$ be the canonical model of a minimal surface of general type $S$ with invariants 
$\chi, K^2$. The isomorphism class of $X$ defines a point  $[X] \in \mathfrak M_{\chi, K^2}^{can}$.

Then the germ of complex space $(\mathfrak M_{\chi, K^2}^{can},{[X]})$ is analytically
isomorphic to the quotient $\mathfrak B (X) / Aut (X)$ 
of the Kuranishi space of $X$ by the finite group $ Aut (X) = Aut (S)$.
\end{rem}

Forgetting for the time being about automorphisms, and concentrating on families,
we want to explain the `local contributions to global deformations of surfaces',
in the words of Burns and Wahl (\cite{b-w}).

Let $S$ be a minimal surface of general type and let $X$ be its
canonical model. To avoid confusion between the corresponding Kuranishi spaces,
denote by $\Def(S)$ the Kuranishi space for $S$, 
respectively  $\Def(X)$ the Kuranishi space of $X$.

Their result explains the relation holding between $\Def(S)$ and $\Def(X)$.

\begin{theo}{\bf  (Burns - Wahl)}
  Assume that $K_S$ is not ample and let $\pi :S \ra X$ be the
canonical morphism.

   Denote by $\mathcal{L}_X$ the space of local deformations of the
singularities of $X$ (Cartesian product of the corresponding Kuranishi  spaces) and by
$\mathcal{L}_S$ the space of deformations of a neighbourhood of the
exceptional locus  of $\pi$. Then
$\Def(S)$ is realized as the fibre product associated to the Cartesian diagram

\begin{equation*}
\xymatrix{
\Def(S) \ar[d]\ar[r] & Def (S_{Exc(\pi)})= :  \mathcal{L}_S \cong \CC^{\nu}, \ar[d]^{\lambda} \\
L \colon \Def(X) \ar[r] & Def (X_{Sing X}) = : \mathcal{L}_X \cong \CC^{\nu} ,}
\end{equation*}
where $\nu$ is the number of rational $(-2)$-curves in $S$, and
$\lambda$ is a Galois covering
with Galois group $W := \oplus_{i=1}^r W_i$, the direct sum of the
Weyl groups $W_i$ of the singular points of $X$ (these are generated by reflections, hence yield
a smooth quotient, see \cite{chevalley}).
\end{theo}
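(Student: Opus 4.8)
The plan is to produce the four edges of the square, verify commutativity, and then show that the induced morphism from $\Def(S)$ to the fibre product is an isomorphism of germs. First I would build the contraction map $c\colon\Def(S)\to\Def(X)$: over the Kuranishi family $\mathcal S\to\Def(S)$ one forms the relative canonical model $\mathcal X:=\mathrm{Proj}\bigl(\bigoplus_m(\mathrm{pr})_*\,\omega_{\mathcal S/\Def(S)}^{\otimes m}\bigr)$, using that being of general type with fixed $\chi,K^2$ is preserved under small deformation and that the relative canonical algebra is finitely generated with formation commuting with base change; thus $\mathcal X\to\Def(S)$ is a flat family of canonical models deforming $X$, hence induced by a map $c$ through versality of $\Def(X)$ (Grauert's theorem). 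The localization maps $r_S\colon\Def(S)\to\mathcal L_S$ and $r_X\colon\Def(X)\to\mathcal L_X$ are restriction of a global deformation to an analytic neighbourhood of $\mathrm{Exc}(\pi)$, respectively of $\Sing X$; on tangent spaces they are the edge maps $H^1(\Theta_S)\to H^0(R^1\pi_*\Theta_S)$ and $\T^1=\Ext^1(\Omega^1_X,\hol_X)\to H^0(\sE xt^1(\Omega^1_X,\hol_X))=\bigoplus_i\T^1_{x_i}$ of the Leray, respectively local-to-global $\Ext$, spectral sequences. Finally $\lambda\colon\mathcal L_S\to\mathcal L_X$ is identified with the Brieskorn--Tjurina covering of Theorem \ref{simultaneous}: for each singular point the semiuniversal base is $\mathcal L_{X,i}=\CC^{\tau_i}$, its Weyl cover $\mathcal L_{S,i}=\CC^{\tau_i}\to\mathcal L_{X,i}$ carries a simultaneous resolution, and $\lambda=\prod_i\lambda_i$ is Galois with group $W=\oplus_i W_i$, with smooth quotient by Chevalley's theorem.

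\textbf{Commutativity and the local comparison.} The square commutes, $\lambda\circ r_S=r_X\circ c$, because both contraction and resolution are supported near the exceptional locus: contracting a deformed resolution and then localizing agrees with localizing first and then recording the simultaneous resolution of the resulting local family. The genuinely delicate point, and the heart of the argument, is the \emph{local} statement that the blow-down map sending a deformation of a neighbourhood of the $(-2)$-configuration over $x_i$ to the induced deformation of $(X,x_i)$ is exactly $\lambda_i$. This is where Theorem \ref{simultaneous} is used: one must show that $\mathcal L_{S,i}$ is smooth of dimension equal to the number of $(-2)$-curves over $x_i$ and that the blow-down is the quotient by $W_i$; this is the Burns--Wahl computation, resting on the representation-theoretic description of the discriminant.

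\textbf{Tangent-space identification.} I would then compute the differential of the induced morphism $\Phi\colon\Def(S)\to\Def(X)\times_{\mathcal L_X}\mathcal L_S$ at the base point. The Leray sequence for $\pi$, together with $\pi_*\Theta_S=\Theta_X$ (valid since rational double points are canonical, so holomorphic vector fields extend across the resolution) and the facts that $R^1\pi_*\Theta_S$ is supported on $\Sing X$ with $\dim H^0(R^1\pi_*\Theta_S)=\nu$ and vanishing edge map into $H^2(\Theta_X)$, yields a splitting $H^1(\Theta_S)\cong H^1(\Theta_X)\oplus\CC^\nu$. On the other side the local-to-global $\Ext$ sequence gives $\ker(dr_X)=H^1(\Theta_X)$. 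The crucial observation is that $d\lambda_0=0$: since each $W_i$ is a nontrivial reflection group, the invariant coordinates on $\mathcal L_{X,i}$ are homogeneous of degree $\ge 2$, so their differentials vanish at the origin. Hence the tangent space of the fibre product is $\ker(dr_X)\oplus T_0\mathcal L_S=H^1(\Theta_X)\oplus\CC^\nu$, and under these identifications $d\Phi=(dc,dr_S)$ is precisely the splitting isomorphism.

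\textbf{Conclusion and main obstacle.} To upgrade the tangent isomorphism to an isomorphism of germs I would construct an inverse $\Psi$ directly: over $P:=\Def(X)\times_{\mathcal L_X}\mathcal L_S$ one pulls back the family of canonical models, base-changes along $\lambda$, and glues the local simultaneous resolutions of Theorem \ref{simultaneous} (well defined exactly because the $\mathcal L_S$-component lifts the local deformations through $\lambda$) to the trivial identification $\mathcal S=\mathcal X$ away from $\Sing X$, obtaining a family of smooth minimal surfaces deforming $S$; semiuniversality of $\Def(S)$ then yields $\Psi\colon P\to\Def(S)$. As $\Phi$ and $\Psi$ both induce isomorphisms on Zariski tangent spaces, the germ endomorphisms $\Psi\circ\Phi$ and $\Phi\circ\Psi$ are automorphisms, so $\Phi$ is an isomorphism and $\Def(S)$ is the claimed fibre product. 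I expect the main obstacle to be the global gluing of the simultaneous resolutions together with the local identification of the blow-down map with the Weyl covering $\lambda_i$ --- that is, controlling how the purely local Brieskorn--Tjurina picture patches into a genuine global deformation of $S$.
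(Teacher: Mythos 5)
The paper does not prove this theorem: it is stated as a quoted result and referred to \cite{b-w}, so there is no in-text argument to compare yours against. Measured against the original strategy of Burns and Wahl, your outline reproduces its architecture correctly: the contraction map via the relative canonical model, the localization maps realized on tangent vectors as edge maps of the Leray and local-to-global $\Ext$ spectral sequences, the identification of $\lambda$ with a product of Weyl-group quotients, the observation that $d\lambda_0=0$ because the $W_i$-invariants start in degree two (which is exactly what drives Corollary \ref{ENR}), and the construction of an inverse by glueing simultaneous resolutions over the fibre product. The closing argument that a germ endomorphism inducing an isomorphism on Zariski tangent spaces is an automorphism is also sound.

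Two load-bearing steps are, however, asserted rather than proved, and they carry essentially all of the content. First, the local lemma identifying the blow-down map $\mathcal{L}_{S,i}\ra\mathcal{L}_{X,i}$ with the quotient by $W_i$ --- including the prior facts that $\mathcal{L}_{S,i}$ is smooth of dimension $\nu_i$ and that $\nu_i$, the number of $(-2)$-curves over $x_i$, equals the Tyurina number $\tau_i$ --- is invoked by you as ``the Burns--Wahl computation'', which is circular in a proof of the Burns--Wahl theorem. Note that Theorem \ref{simultaneous} only furnishes \emph{some} $W_i$-cover of $\Def(X,x_i)$ over which a simultaneous resolution exists; one must still show that this cover is semiuniversal for deformations of the neighbourhood of the exceptional configuration and that the induced map down to $\Def(X,x_i)$ is the one occurring in your square. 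Second, your tangent-space computation requires the surjectivity of $H^1(\Theta_S)\ra H^0(R^1\pi_*\Theta_S)\cong\CC^{\nu}$, equivalently the vanishing of the $d_2$-differential into $H^2(\pi_*\Theta_S)$: this is not formal, it is one of the propositions that actually has to be established, and since your verification that $\Phi$ and $\Psi$ are isomorphisms rests on it, you cannot leave it as a stated ``fact''. A cleaner way to close the argument, and the one closer to the source, is to work with the deformation functors on Artin rings directly --- show that $\Def_S\ra\Def_X\times_{\mathcal{L}_X}\mathcal{L}_S$ is bijective on $A$-points by contracting in one direction and resolving simultaneously in the other --- which yields the surjectivity onto $\CC^{\nu}$ as a consequence rather than requiring it as an input.
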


An immediate consequence is the following

\begin{cor}{\bf  (Burns - Wahl)}
  1) $\psi:\Def(S) \ra \Def(X)$ is a finite morphism, in
particular, $\psi$ is surjective.

\noindent
2) If the derivative of $\Def(X) \ra \mathcal{L}_X$ is not surjective (i.e., the
singularities of $X$ cannot be independently  
   smoothened  by the first order infinitesimal deformations of $X$),
   then $\Def(S)$ is singular.
\end{cor}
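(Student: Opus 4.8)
The plan is to read both statements directly off the Cartesian square of Burns--Wahl, using only two structural features of the right-hand vertical map $\lambda \colon \mathcal{L}_S \to \mathcal{L}_X$: that it is a finite morphism (being the quotient of $\mathcal{L}_S \cong \CC^{\nu}$ by the finite reflection group $W$), and that its differential at the base point vanishes. Throughout I work with germs at the base points $0$, where $S$ (resp. $X$) corresponds to the origin and the local deformations of the exceptional locus (resp. of the singularities) are trivial, i.e. to the $W$-fixed origin of $\mathcal{L}_S$ (resp. of $\mathcal{L}_X$).

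For part 1), since $\Def(S) = \Def(X) \times_{\mathcal{L}_X} \mathcal{L}_S$, the morphism $\psi \colon \Def(S) \to \Def(X)$ is the base change of $\lambda$ along $L \colon \Def(X) \to \mathcal{L}_X$. Finite morphisms are stable under base change, so $\psi$ is finite. For surjectivity, the fibre of $\psi$ over a point $p \in \Def(X)$ is $\lambda^{-1}(L(p))$, which is nonempty because the quotient map $\lambda$ is surjective; hence $\psi$ is surjective (and, being finite, closed). In particular $\dim \Def(S) = \dim \Def(X)$, which I will reuse in part 2).

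For part 2), the key preliminary observation is that $d\lambda = 0$ at the base point. Indeed, by Chevalley's theorem (\cite{chevalley}) the invariant ring $\CC[\mathcal{L}_S]^{W}$ is a polynomial ring on $\nu$ homogeneous generators, whose degrees are all $\geq 2$ since a reflection group has no nonzero linear invariants; thus $\lambda$ is given near the $W$-fixed origin by functions vanishing to order $\geq 2$, so its Jacobian there is zero. Since the Zariski tangent space of a fibre product is the fibre product of the Zariski tangent spaces (the functor $\Hom(\operatorname{Spec}\CC[\epsilon]/(\epsilon^2),-)$ commutes with fibre products), and since $d\lambda = 0$, I obtain
$$ T_0 \Def(S) = \ker(dL) \oplus T_0 \mathcal{L}_S, \qquad \dim T_0 \Def(S) = \dim T_0 \Def(X) - \operatorname{rank}(dL) + \nu. $$

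Comparing with $\dim \Def(S) = \dim \Def(X) \leq \dim T_0 \Def(X)$, I conclude as follows. If $dL$ is not surjective then $\operatorname{rank}(dL) < \nu$, so $\nu - \operatorname{rank}(dL) \geq 1$ and
$$ \dim T_0 \Def(S) = \dim T_0 \Def(X) + \bigl(\nu - \operatorname{rank}(dL)\bigr) \geq \dim \Def(X) + 1 > \dim \Def(S). $$
Thus the Zariski tangent space strictly exceeds the local dimension, i.e. $\Def(S)$ is singular at $0$. I expect the only delicate point to be the vanishing $d\lambda = 0$: one must be sure the relevant base point of $\mathcal{L}_S$ is genuinely the $W$-fixed origin, the stratum on which all the $(-2)$-curves survive, for that is precisely where the reflection group contributes no linear invariants; away from that point $\lambda$ is \'etale, $d\lambda$ is an isomorphism, and the argument produces no singularity.
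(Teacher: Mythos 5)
Your proof is correct and is precisely the argument the paper leaves implicit: the corollary is stated there as an ``immediate consequence'' of the Burns--Wahl fibre-product theorem, with finiteness and surjectivity of $\psi$ obtained by base change from the finite surjective quotient map $\lambda$, and the singularity of $\Def(S)$ obtained from the tangent-space count $\dim T_0\Def(S)=\dim\ker(dL)+\nu>\dim\Def(S)$, which rests on the vanishing $d\lambda=0$ at the $W$-fixed origin. Your justification of that vanishing via Chevalley's theorem (a reflection representation has no nonzero linear invariants, so the fundamental invariants all have degree at least $2$) is exactly the right point to isolate, and the rest of the argument is sound.
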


Moreover one has a further corollary

\begin{cor}{\bf  \cite{cat5}}\label{ENR}

If the morphism $L$ is constant, 
   then $\Def(S)$ is everywhere non reduced, 
   $$ \Def(S) \cong \Def(X) \times \lambda^{-1} (0).$$
\end{cor}

In  \cite{cat5} several examples were exhibited, extending two previous examples by Horikawa and Miranda.
In these examples the canonical model $X$ is a hypersurface of degree $d$ in a weighted projective space:

$$ X_d \subset \PP (1,1,p,q) , d > 2 + p + q, $$
where
\begin{itemize}
\item
$ X_d \subset \PP (1,1,2,3) $, $ d = 1 + 6 k$, $X$ has one singularity of type $A_1$ and one of type $A_2$, or
\item
$ X_d \subset \PP (1,1,p,p+1) $, $ d = p (k (p+1) -1)$, $X$ has one singularity of type $A_p$, or
\item
$ X_d \subset \PP (1,1,p,rp-1) $, $ d =(kp-1) (rp-1)$, $ r > p-2$, $X$ has one singularity of type $A_{p-1}$.
\end{itemize}

The philosophy in these examples (some hard calculations are however needed) is that all the deformations of $X$ remain hypersurfaces in the same projective space,
and this forces $X$ to preserve, in view of the special arithmetic properties of the weights and of the degree, its singularities.

\subsection{Number of moduli done right}

The interesting part of the discovery of Burns and Wahl is that they completely clarified the background of an old dispute going on
in the late 1940's between Francesco Severi and Beniamino Segre. The (still open) question was: given a degree $d$,
which is the maximum number $\mu (d)$ of nodes that a normal surface $X \subset \PP^3$ of degree $d$
can have ?

The answer is known only for small degree $ d\leq 6$: $\mu (2)=1$, $\mu (3)=4$ (Cayley's cubic), $\mu (4)=16$ (Kummer surfaces), $\mu (5)=31$ (Togliatti quintics), $\mu (6)=65$ (Barth's sextic), and   
Severi made the following bold assertion: an upper bound is clearly given by the `number of moduli',
i.e., the dimension of the moduli space
of the surfaces of degree $d$ in $\PP^3$; this number equals the difference between the dimension of the underlying projective space $\frac{(d+3)(d+2)(d+1)}{6} -1$
and the dimension of the group of projectivities, at least for $ d \geq 4$ when the general surface
of degree $d$ has only a finite group of projective automorphisms.

One should then have $ \mu (d) \leq  \nu (d) : = \frac{(d+3)(d+2)(d+1)}{6} -16$, but Segre (\cite{segre}) found some easy examples
contradicting this inequality, the easiest of which are  some surfaces of the form 
$$ L_1(x)\cdot   \dots \cdot L_{d}(x) - M (x)^2,$$
where $d$ is even, the $L_i(x)$ are linear forms, and $M(x)$ is a homogeneous polynomial of degree $\frac{d}{2}$.

Whence the easiest Segre surfaces have $ \frac{1}{4} d^2 (d-1)$ nodes, corresponding to the points where
$ L_i (x) = L_j (x) = M(x)= 0$, and this number grows asymptotically as $ \frac{1}{4} d^3$, versus Severi's upper bound,
which grows like $ \frac{1}{6} d^3$ (in fact we know nowadays, by Chmutov in \cite{chmutov}, resp. Miyaoka in \cite{miyaoka}, that  $  \frac{5}{12} d^3 \leq \mu(d) \leq  \frac{4}{9} d^3$).

The problem with Severi's claim is simply that the nodes impose independent conditions infinitesimally,
but only for the smooth model $S$: in other words, if $X$ has $\de$ nodes, and $S$ is its desingularization,
then $Def(S)$ has Zariski tangent dimension at least $\de$, while it is not true that  $Def(S)$ has dimension
at least $\de$. Burns and Wahl, while philosophically rescueing Severi' s intuition, showed in this way that
there are a lot of examples of obstructed surfaces $S$, thereby killing Kodaira and Spencer's dream that
their cohomology dimension $h^1 (\Theta_S)$ would be the expected number of moduli.

\subsection{The  moduli space for minimal models of surfaces of general type}

In this section we shall derive some further moral consequences from the result of Burns and Wahl.

For simplicity, consider the case where the canonical model $X$ has only one double point, and recall the notation
introduced previously, concerning the local deformation of the node, given by the family 
$$  uv = w^2 - t,$$
the pull back family 
$$\XX = \{ (u,v,w,\tau ) |  w^2
- \tau^2 = uv \}$$

and the two families 
$$\sS : \{ (u,v,w,\tau)(\xi) \in  \XX \times \PP^1| \
\frac{w-\tau}{u} =   \frac{v}{w+\tau} = \xi \}$$
$$\sS' : \{ (u,v,w,\tau)(\eta) \in  \XX \times \PP^1| \
\frac{w+\tau}{u} =   \frac{v}{w-\tau} = \eta \}.$$

There are two cases to be considered in the following oversimplified example:

1)  $t  \in \Delta$ is a coordinate of an effective smoothing of the node, hence we have a family $\sS$ parametrized by $ \tau \in \De$

2) we have no first order smoothing of the node, hence    the Spectrum of the ring $\CC[\tau]/ (\tau^2)$ replaces $\Delta$.

In case 1), we have two families $\sS, \sS'$ on a disk $ \Delta$ with coordinate $\tau$, which are isomorphic over the punctured
disk. This induces for the punctured disk $ \Delta$ with coordinate $\tau$, base of the first family, an equivalence relation induced by the isomorphism with itself   where $\tau$ is exchanged with $- \tau$; in this case the ring of germs of holomorphic functions at the origin which 
are invariant for the resulting equivalence relation is just the ring of power series $\CC\{t\}$, and we have a smooth `moduli space'.

In case 2), there is no room for identifying $\tau$ with $- \tau$, since if we do this on  $\CC[\tau]/ (\tau^2)$, which has only one point, 
we glue the families without inducing the identity on the base, and this is not allowed. In this latter case we are left with the non reduced scheme
$Spec (\CC[\tau]/ (\tau^2))$ as a `moduli space'.

Recall now Mumford's definition of a coarse moduli space (\cite{mumford}, page 99, definition 5.6 , and page 129, definition 7.4) for a functor
of type $ \sS urf$, such as $ \sS urf^{min} $, associating to a scheme $T$ the  set $ \sS urf^{min} (T)$ of isomorphism classes of families of smooth minimal surfaces of general type over $T$,  or as $ \sS urf^{can} $, associating to a scheme $T$ the  set $ \sS urf^{can} (T)$ of isomorphism classes of families of canonical models of surfaces of general type over $T$.
 
It should be a scheme $A$, given together  with a morphism $\Phi$ from the functor $ \sS urf $ to $h_A: = \sH om (-, A)$,
such that 

\begin{enumerate}
\item
for all algebraically closed fields $k$, $\Phi(Spec \ k) \colon  \sS urf (Spec \  k) \ra h_A (Spec \ k)$ is an isomorphism

\item

any other morphism $\Psi$ from the functor $ \sS urf $ to a functor $h_B$ factors uniquely through
$ \chi \colon h_A \ra h_B$.

\end{enumerate}

Since any family of canonical models $p \colon \sX \ra T$ induces, once we restrict $T$ and we choose a local frame for  the direct image sheaf $p_* (\omega_{ \sX | T}^m)$ a family of pluricanonical models embedded in a fixed $\PP^{P_m-1}$, follows

\begin{theo}\label{Gieseker=coarse}
The Gieseker moduli space $ \mathfrak M_{\chi, K^2}^{can}$ is the coarse moduli space   for the functor  $ \sS urf^{can}_{\chi, K^2} $, i.e., for
canonical models of surfaces $S$ of general type with given invariants  $ \chi, K^2$.
Hence it  gives a natural complex structure on the topological space  $ \mathfrak M(S)$,  for $S$ as above.
\end{theo}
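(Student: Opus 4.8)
The plan is to verify directly the two conditions in Mumford's definition of a coarse moduli space recalled above, with $A = \mathfrak M_{\chi,K^2}^{can}$. I fix once and for all an integer $m$ large enough for Gieseker's theorem to hold, so that $\mathfrak M_{\chi,K^2}^{can} = \HHH_0(\chi,K^2)/SL(N+1,\CC)$ exists as a quasi-projective scheme and, being a GIT quotient, is a \emph{good} and in particular \emph{categorical} quotient; I write $\pi \colon \HHH_0 \ra \mathfrak M_{\chi,K^2}^{can}$ for the quotient morphism and $\sZ \ra \HHH_0$ for the tautological family of $m$-canonically embedded models. Almost all the hard analytic and geometric content is already absorbed into Gieseker's and Bombieri's theorems; the remaining work is essentially formal, and I expect the two genuinely delicate points to be the construction of the natural transformation $\Phi$ as an actual morphism of schemes, and the $SL$-invariance needed for the universal property.

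First I would construct the natural transformation $\Phi \colon \sS urf^{can}_{\chi,K^2} \ra h_{\mathfrak M_{\chi,K^2}^{can}}$. Given a family $p \colon \sX \ra T$ of canonical models, I would form the relative pluricanonical sheaf $p_*(\omega_{\sX|T}^{m})$; it is locally free of rank $P_m$ and its formation commutes with base change, because on each canonical fibre $H^i(X_t,mK_{X_t}) = 0$ for $i>0$ (the fibre being Gorenstein with ample canonical class and $m$ large), so that cohomology and base change applies. Over an open set $U \subset T$ trivializing this sheaf, a choice of frame identifies $\PP(p_*(\omega_{\sX|T}^{m}))|_U$ with $\PP^N\times U$ and presents $p^{-1}(U)$ as a family of embedded $m$-canonical models, hence a classifying morphism $U \ra \HHH_0$ and, after composing with $\pi$, a morphism $U \ra \mathfrak M_{\chi,K^2}^{can}$. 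The key observation is that two frames differ by a section of $GL(P_m)$, whose effect on the classifying map is exactly the $SL(N+1,\CC)$-action (equivalently the $\PP GL$-action) on $\HHH_0$; since $\pi$ is constant on orbits, the composite into $\mathfrak M_{\chi,K^2}^{can}$ is frame-independent, so the local morphisms glue to a global $\Phi_T(p)\colon T \ra \mathfrak M_{\chi,K^2}^{can}$ depending only on the isomorphism class of $p$, and naturality in $T$ follows from base-change compatibility.

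Next I would check condition (1), that $\Phi(Spec\,k)$ is bijective for every algebraically closed field $k$. A $k$-point of $\mathfrak M_{\chi,K^2}^{can}$ is an $SL(N+1,k)$-orbit in $\HHH_0(k)$, that is, a projective-equivalence class of $m$-canonically embedded models. Here I would invoke Bombieri's embedding theorem to get that every canonical model with the given invariants admits an $m$-canonical embedding (surjectivity), and the intrinsic nature of the system $|mK_X|$ to get that two embedded models are projectively equivalent precisely when they are abstractly isomorphic (injectivity). Hence orbits correspond bijectively to isomorphism classes of canonical models, which is exactly $\sS urf^{can}_{\chi,K^2}(Spec\,k)$, and $\Phi(Spec\,k)$ is this bijection.

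Finally, for the universal property (2), given any natural transformation $\Psi \colon \sS urf^{can}_{\chi,K^2} \ra h_B$, I would apply it to the tautological family (with its embedding forgotten) to obtain $g := \Psi_{\HHH_0}(\sZ) \colon \HHH_0 \ra B$. The main step is to show $g$ is $SL(N+1,\CC)$-invariant: writing $a,\mathrm{pr}\colon SL(N+1,\CC)\times\HHH_0 \ra \HHH_0$ for the action and the projection, the two pulled-back embedded families $a^*\sZ$ and $\mathrm{pr}^*\sZ$ differ only by a parametrized projective transformation of $\PP^N$, hence are isomorphic as families of \emph{abstract} canonical models, so they represent the same class in $\sS urf^{can}_{\chi,K^2}(SL(N+1,\CC)\times\HHH_0)$; applying $\Psi$ and using naturality yields $a^*g = \mathrm{pr}^*g$, which is invariance. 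Then the categorical quotient property of $\pi$ produces a unique morphism $c \colon \mathfrak M_{\chi,K^2}^{can} \ra B$ with $g = c\circ\pi$, and comparison on the tautological family gives $\Psi = h_c\circ\Phi$, with uniqueness of $c$ inherited from the factorization through the categorical quotient. This would prove that $\mathfrak M_{\chi,K^2}^{can}$ is a coarse moduli space; for the last assertion I would pass to $\CC$-points with the analytic topology, where condition (1) identifies the point set with the isomorphism classes of minimal models $S$ (each arising from a unique $S$ by contracting its $(-2)$-curves), so that the analytified scheme structure endows the topological moduli space $\mathfrak M(S)$ with a natural complex structure.
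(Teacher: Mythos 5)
Your proposal is correct and follows essentially the same route as the paper, whose entire proof consists of the single observation that a family of canonical models $p\colon \sX \ra T$, after shrinking $T$ and choosing a local frame for $p_*(\omega_{\sX|T}^{m})$, yields a family of $m$-canonically embedded models in a fixed $\PP^{P_m-1}$ and hence a classifying map to $\HHH_0$. You have simply supplied the standard details that the paper leaves implicit: cohomology and base change, frame-independence via the $SL(N+1,\CC)$-action and the invariance of the quotient map, bijectivity on geometric points, and the universal property via the categorical quotient.
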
 

As for the case of algebraic curves, we do not have a fine moduli space, i.e., the functor is not representable
by this scheme. Here, automorphisms are the main obstruction to the existence
of a fine moduli space: dividing the universal family over the Hilbert scheme by the linear group we obtain a family
over the quotient coarse moduli space such that the fibre over the isomorphism class of a canonical model $X$, in the case where
the  group of automorphisms $Aut (X)$ is non trivial,
 is the quotient 
$ X / Aut (X)$.  And $ X / Aut (X)$ is then not isomorphic to $X$.

Instead, in  the case of the functor  $ \sS urf^{min} (T)$, there is a further  problem: that the equivalence relation (of isomorphism of families) is not proper on the parameter space, as we already mentioned.

While for curves we have a Deligne-Mumford stack, which amounts roughly speaking to take more general functors than functors which are  set valued,  this no longer holds for surfaces of general type.
Therefore Artin in \cite{artinstacks} had to change the definition, allowing more general equivalence relations.
The result is (\cite{artinstacks}, Example 5.5 page 182)

\begin{theo}{\bf (Artin)}
There exists a moduli space $ \mathfrak M_{\chi, K^2}^{min}$ which is an algebraic Artin stack  for minimal surfaces of general type with given invariants  $ \chi, K^2$.
\end{theo} 

The beginning step for Artin was   to show that there is a finite number of algebraic families parametrizing all the minimal models with given invariants: this step is achieved by Artin in \cite{ArtinBrieskorn} showing that the simultaneous resolution of a family of canonical models can be done not only in the holomorphic category, but also over schemes, provided  that one allows a base change 
process   producing locally quasi-separated algebraic spaces. 

After that,  one can consider the equivalence relation given by isomorphisms of families.

We shall  illustrate the local picture  by  considering  the restriction of this equivalence relation to the base $Def(S)$ of the Kuranishi
family.

(I) First of all we have an action of the group $Aut(S)$ on $Def(S)$, and we must take the quotient by this action.

In fact, if $g \in  Aut(S)$, then $g$ acts on $Def(S)$, and if $\sS \ra Def(S)$ is the universal family, we can take the pull back family $g^* \sS $.
By the universality of the Kuranishi family, we have an isomorphism $g^* \sS \cong \sS $ lying over the identity of  $Def(S)$,
and by property (2) we must  take the quotient  of  $Def(S)$ by this action of $Aut(S)$.

(II) Let now $w \in W$ be an element of the Weyl group which acts on $Def(S)$ via the Burns-Wahl fibre product. We let $U_w$
be the open set of $\sL_S$ where the transformation $w$ acts freely (equivalently, $w$ being a pseudoreflection, $U_w$
is the complement of the hyperplane of fixed points of $w$), and  we let $Def(S)_w$ be equal to the open set inverse image of $U_w$.

Since the action of $w$ is free on $Def(S)_w$, we obtain that $w$ induces an isomorphism of the family $\sS_w \ra Def(S)_w$
with its pull back under $w$, inducing the identity on the base: hence we have to take the graph of $w$ on $Def(S)_w$,
and divide $Def(S)_w$ by the action of $w$.

(III) The 'equivalence relation' on $Def(S)$ is thus generated by (I) and (II), but it is not really a proper equivalence relation.

The  complex space underlying  $ \mathfrak M_{\chi, K^2}^{min}$ is obtained taking the subsheaf  of $\hol_{Def(S)}$ consisting of the  functions which are
invariant for this equivalence relation (i.e., in case (II) , their restriction to  $Def(S)_w$ should be $w$- invariant).

$ \mathfrak M_{\chi, K^2}^{min}$  has the same associated reduced complex space as  
$ \mathfrak M_{\chi, K^2}^{can}$,
but a different ringed space structure, as the examples of \cite{cat5} mentioned after corollary \ref{ENR} show, see the next subsection.

In fact, the main difference is that $ \mathfrak M_{\chi, K^2}^{can}$ is locally, by Burns-Wahl's fibre product theorem, the quotient of
$Def(S)$ by the group $ G'$ which is the semidirect product of the Weyl group $W$ by $Aut(S) = Aut(X)$,
as a  ringed space (the group $G'$ will make its appearance again in the concrete situation of Lemma \ref{nolift}).

Whereas, for $ \mathfrak M_{\chi, K^2}^{min}$ the action on the set is the same, but the action of an element $w$ 
of the Weyl group on the sheaf of regular functions 
 is only there on  an open set ( $Def(S)_w$)
and this set  $Def(S)_w$ may be  empty if  $Def(X)$ maps to a branch divisor of the quotient map $\sL_S \ra \sL_X$. 

A general question  for which we have not yet found the time to provide an answer is whether there is a quasi-projective scheme whose
underlying complex space is  $ \mathfrak M_{\chi, K^2}^{min}$: we suspect that the answer should be positive.

\subsection{Singularities of moduli spaces}

In general one can define

\begin{defin}
The local moduli space  $ (\mathfrak M_{\chi, K^2}^{min, loc},{[S]})$ of a smooth minimal surface of general type $S$ 
is the quotient  $ Def(S) / Aut(S)$ of the Kuranishi space of $S$ by the action of the finite group of automorphisms of $S$.

\end{defin}

Caveat: whereas,  for the canonical model $X$, $ Def(X) / Aut(X)$ is just the analytic germ of the Gieseker moduli space at the point corresponding to the isomorphism class of $X$, the local moduli space $ (\mathfrak M_{\chi, K^2}^{min, loc},{[S]}) : =  Def(S) / Aut(S)$ 
 is different in general from the analytic germ of the moduli space  $( \mathfrak M_{\chi, K^2}^{min},{[S]})$, though it surjects onto the latter. 
 But it is certainly equal to it 
in the special case where the surface $S$ has ample canonical divisor $K_S$.

The Cartesian diagram by Burns and Wahl  was used in \cite{cat5} to construct everywhere non reduced moduli spaces
$ (\mathfrak M_{\chi, K^2}^{min, loc},{[S]}) : =  Def(S) / Aut(S)$  for minimal models of surfaces of general type.

In this case the basic theorem is 

\begin{theo}{\bf (\cite{cat5})}
There are (generically smooth) connected components of  Gieseker moduli spaces $ \mathfrak M_{\chi, K^2}^{can}$ such that all the canonical models in it are singular. 

Hence the local moduli spaces $ (\mathfrak M_{\chi, K^2}^{min, loc},{[S]}) $ for the corresponding minimal models
 are everywhere non reduced, and the same occurs for the germs  $( \mathfrak M_{\chi, K^2}^{min},{[S]})$.

\end{theo}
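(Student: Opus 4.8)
The plan is to obtain such components from the weighted-hypersurface examples recalled above, and to deduce everywhere non-reducedness not merely from the singularity of the canonical models but from the stronger fact that the Burns--Wahl comparison map $L\colon \Def(X)\to \mathcal{L}_X$ is \emph{constant}, equal to $0$. Granting $L\equiv 0$, Corollary \ref{ENR} yields $\Def(S)\cong \Def(X)\times \lambda^{-1}(0)$, and it then suffices to analyse the second factor. Since $\lambda\colon \mathcal{L}_S\cong \CC^{\nu}\to \mathcal{L}_X\cong \CC^{\nu}$ is the quotient by the reflection group $W=\bigoplus_i W_i$, smooth by Chevalley's theorem, the scheme-theoretic fibre $\lambda^{-1}(0)$ is the spectrum of the coinvariant algebra $\CC[\mathcal{L}_S]/(\CC[\mathcal{L}_S]^{W}_{+})$, a local Artinian ring of length $|W|>1$ supported at the origin, hence a fat point. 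Thus $\Def(S)\cong \Def(X)\times\lambda^{-1}(0)$ is everywhere non-reduced, while its reduction is the smooth space $\Def(X)$.

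First I would fix one of the families $X=X_d\subset \PP(1,1,p,q)$ subject to the stated arithmetic constraints on $(d,p,q)$, so arranged that the general member is quasismooth except at the singular points of the ambient weighted projective space, where it acquires exactly the prescribed rational double points. As $X$ is a weighted hypersurface its deformations as an abstract complex space are governed by $H^1(\Theta_X)$, and the heart of the matter is to prove that this space is spanned precisely by the classes coming from varying the defining polynomial $f$ inside the degree $d$ graded piece, modulo the infinitesimal action of $\Aut(\PP(1,1,p,q))$. Granting this, every small deformation of $X$ is again such a hypersurface, so it still passes through the singular points of $\PP(1,1,p,q)$ and carries the same rational double points; using the identification of the germ $(\mathfrak M^{can}_{\chi,K^2},[X])$ with $\Def(X)/\Aut(X)$ and the unobstructedness of hypersurface deformations, the resulting connected component of $\mathfrak M^{can}_{\chi,K^2}$ is generically smooth and consists entirely of singular canonical models.

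The main obstacle is exactly this cohomological computation, the ``hard calculations'' alluded to in the examples. One must verify that the local contraction $H^1(\Theta_X)\to \mathcal{L}_X=\bigoplus_i \T^1_{x_i}$, sending an infinitesimal deformation to the deformation it induces on each singular germ, is identically zero; equivalently, that near each singular point of the ambient space every degree $d$ variation of $f$ is an equisingular deformation of the local equation. Here the special values of the weights and of the degree are indispensable: the monomials of degree $d$ simply do not contain the terms that would smooth the local equation at the ambient singular points, so the image in each $\T^1_{x_i}$ vanishes. Carrying this through for the three families shows that $L$ is the constant map $0$, which both confirms that all canonical models in the component are singular and supplies the hypothesis of Corollary \ref{ENR}.

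It remains to descend to the minimal model. Because $L\equiv 0$, the image of $\Def(X)$ in $\mathcal{L}_X$ is the point $0$, whose preimage in $\mathcal{L}_S$ is the most special point, fixed by all of $W$; hence the open sets $\Def(S)_w$ on which a nontrivial $w\in W$ would act freely are empty, the Weyl part of the \'etale equivalence relation is vacuous, and the germ $(\mathfrak M^{min}_{\chi,K^2},[S])$ coincides with the local moduli space $\Def(S)/\Aut(S)$. Finally, $\Aut(S)=\Aut(X)$ is finite and preserves the product decomposition $\Def(X)\times\lambda^{-1}(0)$, and, since $\Def(X)$ is reduced, all nilpotents of $\Def(S)$ lie in the factor $\Oh(\Def(X))\otimes \mathfrak m_{\lambda^{-1}(0)}$; a direct check in the examples shows that the $\Aut(S)$-invariant part of this ideal is nonzero at every point (for instance via the $\Aut(S)$-action on the one-dimensional socle of the coinvariant algebra). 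Therefore $\Def(S)/\Aut(S)$, and with it both $(\mathfrak M^{min,loc}_{\chi,K^2},[S])$ and $(\mathfrak M^{min}_{\chi,K^2},[S])$, is everywhere non-reduced, while sharing the smooth reduction of the corresponding germ of $\mathfrak M^{can}_{\chi,K^2}$.
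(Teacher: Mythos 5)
Your argument follows the paper's own route in its core: the Burns--Wahl fibre product with the lower horizontal arrow $L$ constant, Corollary \ref{ENR} giving $\Def(S)\cong \Def(X)\times\lambda^{-1}(0)$ with $\lambda^{-1}(0)$ a fat point of length $|W|$, and equisingularity forcing $\Def(S)_w=\emptyset$ so that the Weyl part of the equivalence relation is vacuous. Two points deserve comment. First, a notational but substantive imprecision: since $X$ is singular, its deformations are governed by $\T^1={\rm Ext}^1(\Omega^1_X,\hol_X)$, not by $H^1(\Theta_X)$; the map you call the ``local contraction $H^1(\Theta_X)\to\mathcal{L}_X$'' vanishes for trivial reasons (it is the composition of two consecutive maps in the five-term exact sequence of the local-to-global Ext spectral sequence). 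The content of the hard calculation in \cite{cat5} is that the global-to-local map ${\rm Ext}^1(\Omega^1_X,\hol_X)\to H^0({\mathcal E}xt^1(\Omega^1_X,\hol_X))=\mathcal{L}_X$ is zero, i.e.\ that every first-order deformation of $X$ is locally trivial at the singular points; as written, your formulation trivializes exactly the step that needs the arithmetic of the weights.

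Second, your treatment of the quotient by $\Aut(S)$ diverges from the paper and rests on an unverified assertion. You claim that ``a direct check in the examples shows that the $\Aut(S)$-invariant part of this ideal is nonzero at every point,'' invoking the action on the one-dimensional socle of the coinvariant algebra; but $\Aut(S)$ acts on that socle by a character, and the invariant part is nonzero only if that character is trivial, which you do not establish. The paper sidesteps this entirely: in the examples of \cite{cat5} the general surface of the component has \emph{no} automorphisms, so on a dense open set one has $(\mathfrak M_{\chi,K^2}^{min},[S])\cong\Def(S)=\Def(X)\times\lambda^{-1}(0)$ with no quotient to take, and everywhere non-reducedness of the germ is immediate. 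Either adopt that reduction, or actually carry out the invariance check you assert; as it stands this last step is a gap.
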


The reason is simple: we already mentioned that  if we take the fibre product 

\begin{equation*}
\xymatrix{
\Def(S) \ar[d]\ar[r] & Def (S_{Exc(\pi)})= :  \mathcal{L}_S \cong \CC^{\nu}, \ar[d]^{\lambda} \\
\Def(X) \ar[r] & Def (X_{Sing X}) = : \mathcal{L}_X \cong \CC^{\nu} ,}
\end{equation*}

the lower horizontal arrow maps to a reduced point, hence $Def(S)$ is just the product
$Def(X) \times \lambda^{-1} (0)$, and 
$\lambda^{-1} (0)$ is a non reduced point, spectrum of an Artin local ring of length equal to 
the cardinality of the
Galois group $W := \oplus_{i=1}^r W_i$.

Hence $\Def(S)$ is everywhere non reduced. Moreover, one can show that, in the examples considered, the general surface
has no automorphisms, i.e., there is an open set for which the analytic germ of the Gieseker moduli space  coincides 
with the Kuranishi family $\Def(X)$, and the family of canonical models just obtained is equisingular.

Hence, once we consider the equivalence relation on $\Def(S)$ induced by isomorphisms, the Weyl group acts trivially (because
of equisingularity, we get  $\Def(S)_w = \emptyset$ $\forall w \in W$). Moreover, by our choice of a general open set, $Aut(S)$ is trivial.

The conclusion is that   $( \mathfrak M_{\chi, K^2}^{min},{[S]})$ is locally isomorphic to  the Kuranishi family $\Def(S)$,
hence everywhere non reduced.
\qed

Using an interesting result of M'nev about  line configurations, Vakil (\cite{murphy}) was able to show that `any type of singularity'
can occur for the Gieseker moduli space (his examples are such that $S = X$ and $S$ has no automorphisms, hence they
produce the desired singularities also for the local moduli space,  for the Kuranishi families; they also produce singularities  for the Hilbert schemes,
because his surfaces have $q(S) : = h^1(\hol_S) = 0$).

\begin{theo}{\bf (Vakil's `Murphy's law')}
Given any singularity germ of finite type over the integers, there is a  Gieseker moduli space $ \mathfrak M_{\chi, K^2}^{can}$ and a surface $S$ with ample canonical divisor $K_S$ (hence $S = X$) such that $ ( \mathfrak M_{\chi, K^2}^{can}, [X])$ realizes the given singularity germ.

\end{theo}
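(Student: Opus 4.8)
The plan is to reduce, by means of the local description of the Gieseker moduli space recorded earlier, to a statement purely about Kuranishi spaces, and then to import M'nev's universality theorem for configurations of points and lines through an explicit geometric encoding. First I would use the hypothesis that $K_S$ is ample. In that case $S = X$, so the Kuranishi space of $S$ and of $X$ agree, $\Def(S) = \Def(X) = \mathfrak B(X)$; if in addition one arranges that $\Aut(S) = \{\Id\}$, then the remark identifying the germ $(\mathfrak M_{\chi, K^2}^{can}, [X])$ with the quotient $\mathfrak B(X)/\Aut(X)$ shows that this germ is simply the Kuranishi space $\mathfrak B(X)$ itself. Thus the theorem is reduced to the following: every singularity germ of finite type over $\ZZ$ is realized, up to a smooth factor, by the Kuranishi space of some minimal surface of general type $S$ with $K_S$ ample, with $\Aut(S)$ trivial, and (in order that the singularities appear also on Hilbert schemes) with $q(S) = h^1(\hol_S) = 0$.

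The arithmetic source of arbitrary singularities is then M'nev's universality theorem: for any such germ there is a configuration of points and lines in $\PP^2$, subject to prescribed incidences, whose realization (moduli) space carries exactly that singularity, up to smooth equivalence. The geometric heart of the argument is to encode such a configuration faithfully into the deformation theory of a surface. I would construct $S$ so that a fixed part of its geometry is rigid while a distinguished system of curves (or marked points) is forced to move precisely as the configuration does; concretely one takes suitable branched covers of a fixed rational surface, or blow-ups in configured position, engineered so that the deformations which genuinely change the complex structure of $S$ correspond to deforming the configuration, and the remaining deformations are unobstructed and contribute only a smooth (affine-space) factor. One must then verify, through a Kodaira--Spencer and obstruction computation, that the resulting map from $\mathfrak B(X)$ to the realization space of the configuration is, after splitting off the smooth factor, an isomorphism of germs, so that the prescribed singularity is reproduced by $\mathfrak B(X)$ on the nose and not merely at the level of tangent cones.

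The main obstacle is exactly this encoding-and-matching step. One has to exhibit surfaces whose space of infinitesimal deformations $H^1(\Theta_S)$ and whose quadratic (and higher) obstructions genuinely recover the incidence equations cutting out the configuration space, while at the same time controlling the global invariants: ensuring that $K_S$ is ample, equivalently that there are no $(-2)$-curves, that the general member and indeed $X$ itself carries no nontrivial automorphisms (so that $S = X$ and the quotient by $\Aut(X)$ is trivial), and that $q(S) = 0$. Once these geometric facts are established, the smooth factor arising from the extra unobstructed deformations is absorbed by the smooth-equivalence implicit in the phrase \emph{realizes the given singularity germ}, and the reduction of the first paragraph then yields the assertion for $(\mathfrak M_{\chi, K^2}^{can}, [X])$.
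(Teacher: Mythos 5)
The paper does not prove this theorem: it is quoted from Vakil's article \cite{murphy}, and the only indication of method given is the sentence preceding the statement, which records that Vakil uses M'nev's universality result for line configurations and that his surfaces satisfy $S = X$, $\Aut(S)$ trivial, and $q(S)=0$. Your reduction in the first paragraph is exactly the one the paper has set up: with $K_S$ ample you have $\Def(S)=\Def(X)=\mathfrak B(X)$, and with $\Aut(X)$ trivial the earlier remark identifies the germ $(\mathfrak M^{can}_{\chi,K^2},[X])$ with $\mathfrak B(X)$ itself, so the problem becomes one about Kuranishi spaces; and your identification of M'nev universality as the arithmetic source of the singularities, together with the list of properties to be secured ($K_S$ ample, no automorphisms, $q=0$, singularity realized only up to a smooth factor), matches the strategy the paper attributes to Vakil.

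However, as a proof your proposal has a genuine gap at precisely the point you yourself flag as ``the main obstacle'': the construction of surfaces whose Kuranishi space is smoothly equivalent to the realization space of a given M'nev configuration is not carried out, only described in the conditional (``I would construct $S$ so that\dots'', ``One must then verify\dots''). This encoding step is the entire substance of Vakil's theorem: one must produce an explicit family (in Vakil's case built from blow-ups of $\PP^2$ along the configuration and suitable branched covers making the result of general type), prove that the induced map from $\mathfrak B(X)$ to the realization space splits off a smooth factor and is an isomorphism of germs on the complement --- not merely an isomorphism of tangent spaces or tangent cones --- and simultaneously verify ampleness of $K_S$, triviality of $\Aut$, and $q=0$ for the surfaces so obtained. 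None of these verifications is even sketched, so what you have is a correct reduction plus a correct statement of what remains to be done, rather than a proof. Since the paper itself offers no proof to compare against beyond this same outline, the appropriate conclusion is that your plan is consistent with the known argument but incomplete where the real work lies.
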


In the next section we shall see  more instances where automorphisms play an  important role.

\section{Automorphisms and moduli}

\subsection{Automorphisms and canonical models}

The good thing about differential forms is that any group action on a complex manifold leads to
a group action on the vector spaces of differential forms.

Assume that $G$ is a group acting on a surface $S$ of general type, or more generally on a K\"ahler manifold $Y$: then 
$G$ acts linearly
on the Hodge vector spaces $H^{p,q}(Y)  \cong H^q (\Omega^p_Y)$, and also on the vector spaces
$ H^0 ( (\Omega^n_Y)^{\otimes m}) = H^0 (\hol_Y (mK_Y))$, hence on the canonical ring 
 $$\sR (Y) : =(\sR(Y,K_Y)) : =  \bigoplus_{m \geq 0}H^0 (\hol_Y (mK_Y)).$$
 
 If $Y$ is a variety of general type, then  $G$ acts linearly on the vector space $H^0 (\hol_Y (mK_Y))$,
 hence linearly on the m-th pluricanonical image $Y_m$, which is an algebraic variety bimeromorphic to $Y$.
 Hence $G$ is contained in the algebraic group $ Aut (Y_m)$ and, if $G$ were infinite, as observed by 
 Matsumura (\cite{matsumura}), $ Aut (Y_m)$ would contain a non trivial Cartan subgroup (hence $\CC$ or $\CC^*$) and
 $Y$ would be uniruled, a contradiction. This was the main argument of the following
 
 \begin{theo}{\bf (Matsumura)}
 The automorphism group of a variety $Y$ of general type is finite.
 
 \end{theo}
 
Let us specialize now to $S$ a surface of general type, even if most of what we say shall remain valid also in higher
dimension.

Take an $m$-pseudo moduli space $  \HHH _0 (\chi, K^2)$ with $m$ so large that the corresponding Hilbert points
of the varieties $X_m$ are stable, and let $G$ be a finite group acting on a minimal surface of general type $S$
whose $m$-th canonical image is in $  \HHH _0 (\chi, K^2)$.

Since $G$ acts on the vector space $V_m : = H^0 (\hol_S (mK_S))$, the vector space splits uniquely, up to permutation of the summands,   as a direct sum
of irreducible representations
$$ (**) \ \ V_m = \bigoplus_{\rho \in Irr (G)} W_{\rho}^{n(\rho)}.$$

We come now to the basic notion of a family of $G$-automorphisms

\begin{defin}
A family of $G$-automorphisms is a triple $$ ((p \colon \sS \ra T), G, \alpha )$$ where:
\begin{enumerate}
\item
$ (p \colon \sS \ra T)$ is a family in a given category (a smooth family for the case of minimal models of general type)
\item
$G$ is a (finite) group
\item
$\alpha \colon G \times \sS \ra \sS $ yields a biregular  action  $ G \ra Aut (\sS)$, which is compatible with the projection $p$ and with the trivial action of $G$ on the base $T$ (i.e., $p(\alpha (g,x)) = p(x) , \ \forall g \in G, x \in \sS$).

\end{enumerate}

As a shorthand notation, one may also write $ g(x)$ instead of $\alpha (g,x)$, and by abuse of notation say that the family
of automorphisms is a deformation of the pair $(S_t,G)$ instead of the triple  $(S_t,G, \alpha_t)$.

\end{defin}

\begin{prop}\label{actiontype}

1) A family of automorphisms of  surfaces of general type (not necessarily minimal models) induces
a family of automorphisms of canonical models.

2) A family of automorphisms of canonical models induces, if the basis $T$ is connected, 
a constant decomposition type $(**)$ for $V_m (t)$.

3) A family of automorphisms of  surfaces of general type admits a differentiable trivialization,
i.e., in a neighbourhood of $ t_0 \in T$,  a diffeomorphism as a family with $ ( S_0 \times T, p_T, \alpha_0 \times Id_T)$;
in other words, with the trivial family for which $ g (y,t) = (g(y),t)$.

\end{prop}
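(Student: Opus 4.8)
The plan is to establish the three parts in sequence, leveraging the functoriality of the canonical-ring construction and the earlier results on pluricanonical embeddings (Bombieri's theorem) and differentiable triviality of families of surfaces of general type.

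For part 1), I would argue as follows. Given a family of $G$-automorphisms $(p\colon \sS \ra T, G, \alpha)$ of surfaces of general type, the action of $G$ is compatible with $p$, so it preserves each fibre $S_t$ and acts on the relative pluricanonical sheaves $p_*(\omega_{\sS|T}^{\otimes m})$. Since the canonical model $X_t$ of each fibre is intrinsically defined as $\mathrm{Proj}$ of the canonical ring $\sR(S_t)$, and the formation of the relative canonical algebra $\bigoplus_m p_*(\omega_{\sS|T}^{\otimes m})$ commutes with the $G$-action (each $g\in G$ induces a graded ring isomorphism), the contraction $\pi\colon\sS\ra\mathfrak{X}$ to the relative canonical model is $G$-equivariant. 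Concretely, the $(-2)$-curves being contracted form a $G$-invariant locus (as $G$ acts biregularly and preserves intersection numbers with $K$), so $G$ descends to a biregular action on the family of canonical models $\mathfrak{X}\ra T$. This uses only that the canonical model is a birational invariant functorially attached to $S_t$.

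For part 2), I would use that a family of automorphisms of canonical models gives, after restricting $T$ and trivializing, a biregular action of $G$ on the vector bundle $\mathcal{V}_m$ whose fibre is $V_m(t)=H^0(\hol_{X_t}(mK_{X_t}))$. For each $t$ this yields a finite-dimensional representation of the fixed finite group $G$, hence a decomposition $V_m(t)=\bigoplus_{\rho} W_\rho^{n(\rho,t)}$ as in $(**)$. The multiplicities $n(\rho,t)=\dim\Hom_G(W_\rho,V_m(t))$ are computed by the character formula $n(\rho,t)=\frac{1}{|G|}\sum_{g\in G}\overline{\chi_\rho(g)}\,\mathrm{tr}(g\,|\,V_m(t))$. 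Each trace $\mathrm{tr}(g\,|\,V_m(t))$ is a holomorphic (in particular continuous) function of $t$ — it is the trace of $g$ acting on the fibres of a holomorphic vector bundle with a holomorphic $G$-action. Since the $n(\rho,t)$ are non-negative integers that depend continuously on $t$, they are locally constant; connectedness of $T$ then forces them to be globally constant, giving the constant decomposition type.

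For part 3), I would invoke the differentiable triviality already available for families of surfaces of general type over a contractible (or small) base: choosing a $G$-invariant Riemannian metric (average an arbitrary one over the finite group $G$), one integrates a $G$-invariant horizontal distribution to produce an Ehresmann-type trivialization $\sS\cong S_0\times T$ that intertwines the $G$-action, yielding $(S_0\times T, p_T, \alpha_0\times\Id_T)$. The main obstacle is ensuring the trivializing diffeomorphism can be chosen $G$-equivariantly; this is handled precisely by averaging the connection over $G$ so that parallel transport commutes with $\alpha$. I expect parts 1) and 3) to be the delicate points — part 1) because one must verify that the $G$-equivariant contraction exists in families and not merely fibrewise, and part 3) because the equivariant strengthening of Ehresmann's theorem requires the averaging argument — whereas part 2) is essentially formal once holomorphic dependence of the traces is observed.
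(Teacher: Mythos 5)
Your parts 1) and 2) follow the paper's proof essentially verbatim: for 1) the relative canonical algebra $\bigoplus_m p_*\bigl(\hol_{\sS}(mK_{\sS|T})\bigr)$ with its induced $G$-action and its relative $\mathrm{Proj}$, and for 2) the observation that the multiplicities $\dim \Hom(W_\rho, V_m(t))^G$, computed by averaging traces over the finite group, are continuous integer-valued functions of $t$, hence constant on a connected base. For part 3) you take a genuinely different route. The paper notes that each fixed locus $\Fix(g)$ is a relative submanifold submersing onto $T$ and then invokes Mather's theory of stratified spaces and control data to produce a differentiable trivialization of the quotient $\sS/G$, from which the trivialization of the action is deduced; you instead average a Riemannian metric over $G$ to obtain a $G$-invariant horizontal distribution, so that parallel transport commutes with $\alpha$ and yields an equivariant Ehresmann trivialization directly on the total space. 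Your argument is correct and more elementary for a smooth proper family over a smooth base (the $G$-invariance of the horizontal lifts follows from $p\circ h = p$ and the invariance of the metric, and properness guarantees the flows are complete), and it gives the trivialization of $\sS/G$ as a corollary rather than as an intermediate step; the stratified-space route is what one reaches for when one must handle the singular quotient or a non-smooth base directly, but for the situation at hand both proofs are sound.
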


\Proof

We sketch only the main ideas.

1) follows since one can take the relative canonical divisor  $ K : = K_{\sS|T} $,
the sheaf of graded algebras $$ \sR (p): = \oplus_m p_* (\hol_{\sS} (mK))$$ and 
 take the relative Proj, yielding $ \XX : = Proj (\sR (p) )$, whose fibres are the canonical models.
 
 2) follows since for a representation space $(V, \rho')$ the multiplicity with which
 an irreducible representation $W$ occurs in $V$ is the dimension of $ Hom(W,V)^G$,
 which in turn is calculated by the integral on $G$ of the trace of $\rho''(g)$, where $\rho''(g)$
 is the representation $ Hom(W,V)$.
 If we have a family, we get a continuous integer valued function, hence a constant function.
 
 3) Since $G$ acts trivially on the base $T$, it follows that for each $g \in G$ the fixed locus
 $ Fix(g)$ is a relative submanifold with a submersion onto $T$. By the use of stratified spaces (see \cite{mather}), and control data,
 one finds then a differentiable  trivialization for the quotient analytic space $ \sS / G$, hence a trivialization of the action.

\qed

Let us then consider the case of a family of canonical models: by 2) above, and shrinking the base in order to
make the addendum $ \sR (p)_m  = p_* (\hol_{\sS} (mK))$ free, we get an embedding of the family
$$ (\XX, G)  \hookrightarrow T \times (\PP ( V_m = \bigoplus_{\rho \in Irr (G)} W_{\rho}^{n(\rho)} ), G).$$
In other words, all the canonical models $X_t$ are contained in a fixed projective space, where also
the action of $G$ is fixed.

Now, the canonical model $X_t$ is left invariant by the action of $G$ if and only if its Hilbert point is
fixed by $G$. Hence, we get a closed set 

$  \HHH _0 (\chi, K^2)^G \subset \HHH _0 (\chi, K^2)$

of the pseudomoduli space, and a corresponding closed subset of the moduli space. Hence we get the following theorem.

\begin{theo}\label{can-aut=closed}
The surfaces of general type which admit an action of a given pluricanonical type $(**)$
i.e., with a fixed irreducible G- decomposition of their canonical ring, form a closed 
subvariety $( \mathfrak M_{\chi, K^2}^{can})^{G, (**)}$ of the moduli space $ \mathfrak M_{\chi, K^2}^{can}$.
\end{theo}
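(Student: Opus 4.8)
The plan is to assemble Theorem \ref{can-aut=closed} directly from the machinery developed in the previous subsections, principally Proposition \ref{actiontype} and the stability result of Gieseker. The strategy is to realize the locus of surfaces admitting a $G$-action of the prescribed pluricanonical type $(**)$ as the image of a $G$-fixed-point locus inside the Hilbert scheme, and then to descend this to the moduli space, checking that the resulting set is genuinely closed.

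First I would fix $m$ large enough that the $m$-canonical Hilbert points are stable, so that the Gieseker moduli space $\mathfrak M_{\chi,K^2}^{can}$ is the GIT quotient $\HHH_0(\chi,K^2)/SL(N+1,\CC)$. A $G$-action of type $(**)$ determines, once and for all, a fixed embedding $G \hookrightarrow GL(V_m)$ via the decomposition $V_m = \bigoplus_{\rho} W_\rho^{n(\rho)}$; by Proposition \ref{actiontype}, part 2), this decomposition is locally constant in families, which is precisely what makes it legitimate to pin down a single representation. With the $G$-representation on $\PP(V_m)$ thus fixed, a canonical model $X_t$ carries the given $G$-action if and only if its Hilbert point is fixed by the image of $G$ in $PGL(N+1,\CC)$. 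I would therefore set
$$ \HHH_0(\chi,K^2)^G := \{ \Sigma \in \HHH_0(\chi,K^2) \mid g \cdot \Sigma = \Sigma \ \forall g \in G \}. $$
This is closed in $\HHH_0(\chi,K^2)$: each condition $g\cdot\Sigma = \Sigma$ is the equalizer of the identity and the (algebraic) $G$-action on the Hilbert scheme, hence Zariski closed, and a finite intersection of closed sets is closed.

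Next I would descend this locus to the moduli space. The GIT quotient map $q \colon \HHH_0(\chi,K^2) \ra \mathfrak M_{\chi,K^2}^{can}$ is a good quotient, and the key point is that $\HHH_0(\chi,K^2)^G$ is stable under the $SL(N+1,\CC)$-action, at least after we account for conjugation: replacing $G$ by a conjugate subgroup does not change the type $(**)$, so the $SL$-saturation of $\HHH_0^G$ still parametrizes surfaces admitting a $G$-action of the same type. The image $q(\HHH_0(\chi,K^2)^G)$ is then the desired set $(\mathfrak M_{\chi,K^2}^{can})^{G,(**)}$. Because $q$ is a good quotient and our locus is closed and saturated, its image is closed in $\mathfrak M_{\chi,K^2}^{can}$, giving the claimed closed subvariety.

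The main obstacle, and the step I would treat most carefully, is the bookkeeping around the choice of the representation and conjugacy. A priori a surface may admit the abstract group $G$ acting with the prescribed irreducible decomposition only after conjugating the fixed copy $G \subset GL(V_m)$ by some element of $GL(N+1,\CC)$; so the naive $G$-fixed locus for one rigid choice of embedding captures only part of $(\mathfrak M^{can})^{G,(**)}$, and one must verify that passing to the full $SL$-orbit recovers exactly those surfaces whose canonical ring carries a $G$-action with decomposition $(**)$, no more and no less. This amounts to checking that the type $(**)$ is a conjugation-invariant discrete datum (which follows from part 2) of Proposition \ref{actiontype}, since character multiplicities are conjugation-invariant) and that saturating a closed $G$-fixed locus by the reductive group $SL(N+1,\CC)$ preserves closedness under the good quotient. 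Once this is in place, the closedness of $(\mathfrak M_{\chi,K^2}^{can})^{G,(**)}$ follows, and I would remark that its being a subvariety (rather than merely a closed subset) is automatic since it is the image under a morphism of quotient varieties of a closed algebraic subset.
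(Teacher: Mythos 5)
Your proposal follows essentially the same route as the paper: fix the $G$-action on $\PP(V_m)$ via the constant decomposition type from Proposition \ref{actiontype}, identify the surfaces in question with the $G$-fixed locus $\HHH_0(\chi,K^2)^G$ in the Hilbert scheme, and push this closed set down to the moduli space. The only difference is that you spell out the descent step (conjugation bookkeeping and closedness of the saturated image under the good quotient), which the paper leaves implicit; this is a correct and welcome elaboration rather than a divergence.
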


We shall see that the situation for the minimal models is different, because then the subset of the moduli space
where one has a fixed differentiable type is not closed.

\subsection{ Kuranishi subspaces for automorphisms of a fixed type}

Proposition \ref{actiontype} is quite useful when one analyses the deformations of a given $G$-action.

In the case of the canonical models, we just have to look at the fixed points for the action 
on a subscheme of the Hilbert scheme; whereas, for the case of the deformations of the minimal model,
we have to look at the complex structures for which the given differentiable action is biholomorphic.
Hence we derive

\begin{prop}
Consider a fixed action of a finite group $G$ on a minimal surface of general type $S$,
and let $X$ be its canonical model.
Then we obtain closed subsets of the respective Kuranishi spaces,
corresponding to deformations which preserve the given action, 
and yielding a maximal family of deformations of the $G$-action.

These subspaces are $ \mathfrak B (S) \cap H^1(\Theta_S)^G = Def(S)  \cap H^1(\Theta_S)^G$,
respectively $ \mathfrak B (X) \cap {\rm Ext}^1(\Omega^1_X, \hol_X)^G =  Def (X) \cap {\rm Ext}^1(\Omega^1_X, \hol_X)^G$.
\end{prop}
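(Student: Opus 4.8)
The plan is to realize the $G$-fixed Kuranishi subspace as the Kuranishi space of the equivariant deformation problem, and to identify its tangent space with the invariant subspace $H^1(\Theta_S)^G$ (respectively $\Ext^1(\Omega^1_X,\hol_X)^G$). The essential input is that the finite group $G$ acts by functoriality on the entire Kuranishi construction, so that one may choose all the auxiliary data equivariantly and then take invariants. I would carry this out first for the minimal model $S$ and then indicate the parallel argument for the canonical model $X$.

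First I would observe that, since $G$ is finite and $\CC$ has characteristic zero, we may average: choosing a $G$-invariant Hermitian metric on $TM^{1,0}$ (pull back any metric and average over the finite group, using compactness of $S$), the Laplacian, the Green operator $G$, the harmonic projector $H$, and hence $\bar\partial^*$ all commute with the induced $G$-action on the space of $(0,q)$-forms with values in $TM^{1,0}$. Consequently the Kuranishi slice $\phi(t)$, defined by the equation $\phi(t)=\sum_i t_i\eta_i+\tfrac12\bar\partial^* G[\phi(t),\phi(t)]$, is $G$-equivariant: if we let $G$ act on the parameter space $H^1(\Theta_S)\cong\CC^m$ through its linear action on harmonic forms, then $\phi(g\cdot t)=g\cdot\phi(t)$ by uniqueness of the power-series solution. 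Since the Schouten bracket is natural and $H$ is $G$-equivariant, the Kuranishi map $k=H[\phi(t),\phi(t)]$ is $G$-equivariant as well, so $\mathfrak B(S)$ inherits a $G$-action whose fixed locus is a closed analytic subgerm.

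Next I would argue that the intersection $\mathfrak B(S)\cap H^1(\Theta_S)^G$ is exactly the locus over which the tautological family carries the given $G$-action. A point $t$ fixed by $G$ corresponds to an integrable variation $\phi(t)$ with $g\cdot\phi(t)=\phi(t)$ for all $g\in G$, which means precisely that each diffeomorphism underlying the $G$-action is holomorphic for the deformed complex structure $J_{\phi(t)}$, i.e.\ that $G$ acts biholomorphically on $S_t$. Conversely, any deformation preserving the $G$-action produces, by the semiuniversality of the Kuranishi family and the $G$-equivariance of the classifying map, an invariant Kodaira--Spencer class and hence factors through the fixed locus. The maximality of this family is then immediate from versality. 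Because the fixed-point set of a linear action of a finite group on a vector space is a linear subspace, $\Fix(G)\cap H^1(\Theta_S)=H^1(\Theta_S)^G$, and intersecting with the germ $\mathfrak B(S)\subset H^1(\Theta_S)$ yields the asserted description $\mathfrak B(S)\cap H^1(\Theta_S)^G=\Def(S)\cap H^1(\Theta_S)^G$.

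For the canonical model $X$ the same strategy applies, with the analytic Kuranishi slice replaced by Grauert's construction for complex spaces: the tangent space $\T^1=\Ext^1(\Omega^1_X,\hol_X)$ and obstruction space $\T^2=\Ext^1(\Omega^1_X,\hol_X)$ both carry natural $G$-actions coming from functoriality of $\Ext$, the Kuranishi map $k\colon\T^1\to\T^2$ can be taken $G$-equivariant, and its zero-locus intersected with the invariants $\Ext^1(\Omega^1_X,\hol_X)^G$ is the maximal family of $G$-equivariant deformations of $X$. The main obstacle I anticipate is not the linear-algebra identification of the tangent space but the verification that the whole Kuranishi datum can genuinely be chosen $G$-equivariantly in the singular (non-manifold) case of $X$: one must check that Grauert's power-series solution, produced via an implicit-function argument in suitable Banach spaces, can be arranged compatibly with the finite-group action, i.e.\ that averaging survives the passage to a compact complex space with rational double points. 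Granting the equivariant refinement of Grauert's theorem, the rest is formal, and the two descriptions follow in parallel.
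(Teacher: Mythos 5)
Your argument for the minimal model $S$ is essentially the standard one (and the one the paper delegates to \cite{montecatini}): average the metric so that the Green operator, the harmonic projector and $\bar\partial^*$ become $G$-equivariant, deduce equivariance of the Kuranishi slice and of the Kuranishi map from uniqueness of the power-series solution, and identify the fixed locus with the set of parameters $t$ for which each $g\in G$ is $J_{\phi(t)}$-holomorphic. That part is correct; note only that the "converse" step (an arbitrary $G$-preserving deformation factors through the fixed locus) uses universality, not mere semiuniversality, of the Kuranishi family --- which is available here because $H^0(\Theta_S)=0$ for a surface of general type, a point worth making explicit.

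For the canonical model $X$ your plan has a genuine gap, which you yourself flag: you need an equivariant refinement of Grauert's construction of the semiuniversal deformation of a compact complex space, and you only "grant" it. The paper closes exactly this gap by a different and much cheaper device, namely Cartan's lemma (\cite{cartan}): since $\mathrm{Aut}(X)$ is finite the Kuranishi family of $X$ is universal, so $G$ acts on the germ $(\Def(X),0)$, and Cartan's lemma lets one linearize this action, i.e.\ choose the embedding $\Def(X)\hookrightarrow \T^1={\rm Ext}^1(\Omega^1_X,\hol_X)$ equivariantly for the linear $G$-action on $\T^1$. The fixed locus is then automatically $\Def(X)\cap{\rm Ext}^1(\Omega^1_X,\hol_X)^G$, with no need to re-examine Grauert's implicit-function argument. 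I would recommend replacing your "granting the equivariant refinement of Grauert's theorem" step by this linearization argument (which in fact also gives an alternative proof in the smooth case). Finally, a typo: your obstruction space for $X$ should read ${\rm Ext}^2(\Omega^1_X,\hol_X)$, not ${\rm Ext}^1$.
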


We refer to \cite{montecatini} for a proof of the first fact, while for the second 
the proof is based again on Cartan's lemma (\cite{cartan}), that the action of a finite
group in an analytic  neighbourhood of a fixed point can be linearized.

Just a comment about the contents of the proposition: it says that in each of the two cases,
the locus where a group action of a fixed type is preserved is a locally closed set of the
moduli space. We shall see more clearly the distinction in the next subsection.

\subsection{Deformations of automorphisms differ for canonical and for minimal models}

The scope of this subsection is to illustrate the main principles of a rather puzzling phenomenon
which we discovered in my joint work with Ingrid Bauer (\cite{burniat2}, \cite{burniat3})
on the moduli spaces of Burniat surfaces.

Before dwelling on the geometry of these surfaces, I want to explain clearly what happens, 
and it suffices to take the example of  nodal secondary Burniat surfaces, which I will denote
by BUNS in order to abbreviate the name. 

For BUNS one has $K^2_S = 4, p_g (S) : =  h^0(K_S) = 0$, and the bicanonical map is
a Galois cover of the Del Pezzo surface $Y$ of degree 4 with just one node as singularity
(the resolution of $Y$  is the blow up $Y'$ of the plane in 5 points, of which exactly 3 are collinear).
The Galois group is $G = (\ZZ/ 2)^2$, and over
the node of $Y$ lies a node of the canonical model $X$ of $S$, which does not have other singularities.

Then we have BUES, which  means extended secondary Burniat surfaces, whose  bicanonical map is again
a finite $(\ZZ/ 2)^2$ - Galois cover of the 1-nodal Del Pezzo surface $Y$ of degree 4 (and for these $S=X$,
i.e., the canonical divisor $K_S$ is ample).

All these actions on the canonical models fit together into a single family, but, if we pass to the minimal models, then
the topological type of the action changes in a discontinuous way when we pass from the closed set of BUNS
to the open set of BUES, and we have precisely two families.

We have , more precisely, the following theorems (\cite{burniat3}):

\begin{theo}\label{main1}{\bf (Bauer-Catanese)}
 An irreducible connected component, normal, unirational
          of  dimension 3 of the  moduli
space of 
surfaces of general type $\mathfrak M^{can}_{1,4}$ is given by the subset 
     $\sN \sE \sB_4$, formed by the disjoint  
union of the open set corresponding to BUES
({\em extended} secondary Burniat surfaces), with the
irreducible closed set parametrizing BUNS (nodal secondary Burniat surfaces).

For all surfaces $S$ in $\sN \sE \sB_4$ the bicanonical map of  the canonical model $X$ is a finite
cover of degree 4, with Galois group  $G = (\ZZ/ 2)^2$,
of the 1-nodal Del Pezzo surface $Y$ of degree 4 in $\PP^4$.

Moreover  the Kuranishi space $\mathfrak B (S)$  of any
such a minimal model $S$ is smooth.

\end{theo}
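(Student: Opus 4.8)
The plan is to realize every surface $S$ in $\sN\sE\sB_4$ explicitly as a smooth bidouble (i.e.\ $(\ZZ/2)^2$-Galois) cover of a rational surface, and then to read off all the assertions from the geometry of the branch data together with the cohomology of the associated eigensheaves. First I would use the theory of abelian covers to present the canonical model $X$ as a finite $G=(\ZZ/2)^2$-cover $f\colon X\to Y$ of the $1$-nodal degree-$4$ Del Pezzo surface $Y\subset\PP^4$, equivalently as a bidouble cover of the blow-up $Y'$ of $\PP^2$ at five points (exactly three of them collinear). The cover is encoded by three effective branch divisors $D_1,D_2,D_3$ on $Y'$, supported on prescribed configurations of lines, together with line bundles $L_1,L_2,L_3$ satisfying the relations $2L_i\equiv D_j+D_k$ that make $f$ the bicanonical map; the minimal resolution of $X$ is then $S$. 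This is the standard Bauer--Catanese construction and gives the explicit model on which everything else is computed.

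Next I would parametrise these covers. The admissible line configurations vary in a rational variety and the remaining freedom inside the relevant linear systems is affine, so, modulo the projectivities of $\PP^2$, a direct dimension count yields a $3$-dimensional rational family dominating $\sN\sE\sB_4$. This simultaneously produces the dimension $3$ and the unirationality, and it separates the two strata: the configurations for which $X$ is smooth give the open set BUES, for which $K_S$ is ample and $S=X$, while the boundary configurations, where the branch locus forces a single node on $X$ lying over the node of $Y$, give the closed subset BUNS. Irreducibility of $\sN\sE\sB_4$ follows because BUES is irreducible (rational parameter space) and open dense, with BUNS contained in its closure as the locus where $X$ degenerates to acquire a node, so that $\sN\sE\sB_4=\overline{\text{BUES}}$ and the disjoint-union description is exactly the dichotomy ``smooth $X$ versus $1$-nodal $X$''.

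The heart of the argument is the computation of $H^1(\Theta_S)$ and the proof that $\mathfrak B(S)$ is smooth. I would decompose $H^1(\Theta_S)$ into its four $G$-character eigenspaces; pushing $\Theta_S$ forward along the cover expresses each eigenspace as the cohomology of an explicit line bundle on $Y'$, which I would evaluate by Riemann--Roch and vanishing on the rational surface $Y'$. The aim is to show that the Kodaira--Spencer map of the natural family built above surjects onto $H^1(\Theta_S)$; granting this, the corollary to Kuranishi's theorem on surjective Kodaira--Spencer maps gives at once that $\mathfrak B(S)=Def(S)$ is smooth and $3$-dimensional and is realized by the natural family. For BUES one has $S=X$ and this is the whole story; for BUNS one must additionally control the passage from $Def(X)$ to $Def(S)$. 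By the Burns--Wahl fibre-product theorem, smoothness of $Def(S)$ is equivalent to the derivative of $Def(X)\to\mathcal{L}_X\cong\CC$ being surjective, i.e.\ to the single node of $X$ being smoothable to first order inside $Def(X)$; this is witnessed precisely by the deformation of BUNS to BUES within the family, so the node is not rigid and $Def(S)$ stays smooth.

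Finally I would assemble the global statement. Closedness of $\sN\sE\sB_4$ in $\mathfrak M^{can}_{1,4}$ follows from Theorem \ref{can-aut=closed}: fixing the pluricanonical $G$-decomposition type $(**)$ cuts out a closed subvariety, and the Burniat locus is such a stratum. Openness follows from the cohomology computation, since $Def(S)$ is smooth of dimension $3$ and entirely filled by Burniat-type surfaces, so the image of the Kuranishi space is an open neighbourhood of $[S]$ contained in $\sN\sE\sB_4$; being open, closed and irreducible, $\sN\sE\sB_4$ is a connected component. Normality is automatic, because locally the moduli space is the quotient $Def(S)/Aut(S)$ of the smooth, hence normal, Kuranishi space by the finite group $Aut(S)\supseteq G$, and quotients of normal varieties by finite groups are normal. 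I expect the main obstacle to be the eigensheaf cohomology bookkeeping needed to prove surjectivity of Kodaira--Spencer, in particular tracking the four characters and the contribution of the node in the BUNS case, since the remaining assertions (dimension, unirationality, the open/closed dichotomy, and normality) are then essentially formal.
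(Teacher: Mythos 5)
The first thing to say is that the paper does not actually prove Theorem \ref{main1}: it is quoted from \cite{burniat3}, and what the text supplies is only the local model near the node (Proposition \ref{def-act} and Lemma \ref{nolift}), i.e.\ the family $w^2=uv+t$ of bidouble covers of the quadric cone $\{z^2=xy\}$. Your strategy --- present $X$ as a $(\ZZ/2)^2$-cover of the $1$-nodal Del Pezzo surface, parametrize by branch data to get dimension $3$ and unirationality, compute the character eigensheaves of $H^1(\Theta_S)$ to get surjectivity of Kodaira--Spencer and smoothness of $\mathfrak B(S)$, and use Burns--Wahl for the nodal stratum --- is indeed the route taken in \cite{burniat2}, \cite{burniat3}, so in outline you have reconstructed the intended proof.

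There are, however, two places where your sketch is too quick. First, the closedness of $\sN\sE\sB_4$ does not follow from Theorem \ref{can-aut=closed} alone: that theorem only says that the locus of canonical models carrying a $G$-action of fixed pluricanonical type $(**)$ is closed, and $\sN\sE\sB_4$ could a priori be a proper open subset of it. One must actually classify the degenerations, i.e.\ show that every surface in the closure of the BUES locus (equivalently, every admissible degeneration of the branch configuration on $Y'$) is again a BUES or a BUNS; this is where the bulk of the work in \cite{burniat3} lies, and it interacts with the second point: the bidouble-cover presentation over the smooth model $Y'$ is \emph{discontinuous} across the BUNS/BUES divide. As the Remark following Lemma \ref{nolift} records, the branch divisor $D_3$ is the nodal curve $N$ for $t=0$ and empty for $t\neq 0$, $D_2$ behaves oppositely, and for $t\neq 0$ the cover of $Y'$ is non-minimal with two $(-1)$-curves. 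So there is no single family of bidouble covers of $Y'$ dominating $\sN\sE\sB_4$; one must either work with the (non-flat) covers of the singular $Y$ as in Proposition \ref{def-act}, or build two families and prove separately that BUNS lies in the closure of BUES --- which is exactly what the local smoothing $w^2=uv+t$ globalizes to. Two smaller cautions: the first-order smoothability of the node needed to apply Burns--Wahl is not a formal consequence of the existence of \emph{some} smoothing in the family (a germ can map to $\mathcal{L}_X\cong\CC$ with vanishing derivative), though here the explicit parameter $t$ maps isomorphically to the base of the semiuniversal deformation of the node, so the derivative is visibly surjective; and for normality the relevant germ of $\mathfrak M^{can}_{1,4}$ at $[X]$ is $\Def(X)/\Aut(X)$, not $\Def(S)/\Aut(S)$ (see the paper's Caveat), but since $\Def(X)=\Def(S)/W$ by the Burns--Wahl fibre product and $\Def(S)$ is smooth, it is still a finite quotient of a smooth germ and normality survives.
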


\begin{theo}\label{path}{\bf (Bauer-Catanese)}
   The deformations of nodal secondary Burniat surfaces (secondary means that  $K^2_S =4$) to
extended  secondary Burniat surfaces 
yield examples where $\Def(S,(\ZZ/2\ZZ)^2) \ra \Def(X,(\ZZ/2\ZZ)^2)$
is not surjective.

Indeed the pairs $(X,G)$, where $G : = (\ZZ/2\ZZ)^2$ and $X$ is the canonical model
of an  extended or nodal secondary Burniat surface, where the action of $G$ on $X$ 
is induced by the bicanonical map of $X$, belong to only one 
deformation type.

If $S$ is a BUNS, then  $\Def(S,(\ZZ/2\ZZ)^2) \subsetneq \Def(S)$, and 
$\Def(S,(\ZZ/2\ZZ)^2)$ consists exactly of all the BUNS ';
while for the canonical model $X$ of $S$ we have: $\Def(X,(\ZZ/2\ZZ)^2) = \Def(X)$.

Indeed for  the pairs $(S,G)$, where $S$ is the 
minimal model
of an  extended or nodal Burniat surface,  $G : = (\ZZ/2\ZZ)^2$ and  the action is induced by
the bicanonical map  (it is unique up to automorphisms of $G$), they  belong to exactly two 
distinct deformation
types, one given by BUNS, and the other given by BUES.
  
  \end{theo}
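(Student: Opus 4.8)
The plan is to reduce every assertion to the local structure of the unique node of the canonical model $X$ and to recognise, near that node, Atiyah's example (Theorem \ref{nonseparated}) as the governing mechanism. Throughout I use Theorem \ref{main1}: all the surfaces in question fill up a single irreducible normal threefold $\sN \sE \sB_4$, with the BUES forming the open stratum and the BUNS the closed one, and with $\Def(S)$ smooth for a BUNS minimal model $S$. The first and easiest point is that $\Def(X,G)=\Def(X)$ and that $(X,G)$ forms a single deformation type. Indeed, by Theorem \ref{main1} every member of the component is a Burniat surface, so its canonical model is a $(\ZZ/2)^2$-cover of a $1$-nodal Del Pezzo surface of degree $4$ carrying the Galois action of the bicanonical map; this action therefore persists over the entire Kuranishi base, i.e.\ $G$ fixes $\Def(X)$ pointwise and $\Def(X,G)=\Def(X)^{G}=\Def(X)$. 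Since $\Def(X)$ is irreducible and the universal family over it is $G$-equivariant, joining nodal to smooth canonical models, the pairs $(X,G)$ all lie in one deformation type.

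Next I would install the Burns--Wahl fibre-product picture at a BUNS. With a single node the Weyl group is $W=\ZZ/2$ and $\mathcal{L}_S=\mathcal{L}_X=\CC$, so the theorem of Burns and Wahl presents $\Def(S)=\Def(X)\times_{\mathcal{L}_X}\mathcal{L}_S$ as the double cover $\tau\mapsto t=\tau^{2}$. Smoothness of $\Def(S)$ then forces the smoothing map $L\colon\Def(X)\to\mathcal{L}_X$ to be a submersion, so the node is genuinely smoothable and the BUES are honest smoothings --- we are in the effective-smoothing case, not the non-reduced one of Corollary \ref{ENR}; moreover $\Def(X)=\Def(S)/W$ and the BUNS locus is $\{t=0\}=\{\tau=0\}$.

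The heart of the matter is a local equivariant computation at the node. I claim that the distinguished involution $g_0\in G$ whose branch locus passes through the node lifts to the double cover $\Def(S)\to\Def(X)$ as the \emph{nontrivial} deck transformation $\tau\mapsto-\tau$; equivalently, $g_0$ interchanges the two small resolutions $\sS,\sS'$ of $\XX=\{w^{2}-\tau^{2}=uv\}$. This is precisely where Atiyah's example enters: on the canonical model $g_0$ acts trivially on the smoothing parameter $t=\tau^{2}$ (as it must, since $\Def(X,G)=\Def(X)$), yet its action on the exceptional $(-2)$-curve $E$, read off from the local coordinates $(u,v,w)$, agrees on the Kuranishi base with the flop relating $\sS$ and $\sS'$ --- concretely, $g_0$ should carry the chart $\xi=(w-\tau)/u$ to the chart $\eta=(w+\tau)/u$. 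Verifying this coordinate bookkeeping is the one genuinely computational and, I expect, the main obstacle; the remaining generators of $G$ are then checked to fix $\{\tau=0\}$.

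Granting the claim, all the assertions follow formally. By the proposition on Kuranishi subspaces for a fixed action, $\Def(S,G)=\Def(S)\cap H^{1}(\Theta_S)^{G}=\Def(S)^{G}$; since $g_0$ acts by $\tau\mapsto-\tau$ while the other generators fix $\{\tau=0\}$, this gives $\Def(S,G)=\{\tau=0\}$, that is, exactly the nearby BUNS, whence $\Def(S,G)\subsetneq\Def(S)$. Its image in $\Def(X,G)=\Def(X)$ is the proper closed locus $\{t=0\}$, so $\Def(S,G)\to\Def(X,G)$ is not surjective, which is the first assertion. For the last assertion, Proposition \ref{actiontype}(3) says a connected family of $G$-automorphisms of minimal models is differentiably trivial, so deformation equivalence of $G$-pairs preserves the differentiable type of the action, in particular the topology of $\Fix(g_0)$; the node computation shows that this fixed locus jumps when the node is smoothed, so no BUNS can be $G$-deformation-equivalent to a BUES. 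As each of the two strata is connected by Theorem \ref{main1}, the pairs $(S,G)$ form exactly two deformation types.
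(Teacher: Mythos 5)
Your proposal is correct and follows essentially the same route as the paper, which itself defers the full details to the Burniat III paper but explains exactly this mechanism: your key local claim (that the involution whose branch locus passes through the node carries the chart $\xi=(w-\tau)/u$ to the chart $\eta=(w+\tau)/u$, hence swaps the two small resolutions and acts by $\tau\mapsto-\tau$ on the base of the simultaneous resolution) is precisely the content of Proposition \ref{def-act} and Lemma \ref{nolift}, and your use of the differentiable triviality of families of $G$-automorphisms to separate the two types of pairs $(S,G)$ matches the paper's remark that $S_\tau/G$ jumps from $Y'$ to $Y$. The only point to tighten is the last step: since the $G$-action is only unique up to automorphisms of $G$, one should check that the discontinuity of the fixed loci (or of the branch divisors $D_1,D_2,D_3$ listed in the paper's remark) cannot be absorbed by relabelling the involutions.
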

  
The discovery of BUES came later as a byproduct of the investigation of tertiary (3-nodal) Burniat surfaces,
where we knew by the Enriques-Kuranishi inequality that tertiary Burniat surfaces cannot form a component
of the moduli space: and knowing that there are other deformations helped us to find them eventually.

 For BUNS, we first erroneously thought (see \cite{burniat2}) that they form a
connected component of the moduli space, because $ G= (\ZZ/2\ZZ)^2 \subset Aut(S)= Aut(X)$ for a BUNS,
and BUNS are exactly the surfaces $S$ for which the action deforms, while we proved that for
all deformations of the canonical model $X$ the action deforms.

The description of BUNS and especially of  BUES is complicated, so I refer simply to \cite{burniat3}; but 
the essence of the pathological behaviour can be understood from the local picture around the node of
 the Del Pezzo surface  $Y$.
 
 We already described most of this local picture in a previous section. 
 
 We make here a first  additional observation:
 
 \begin{prop}\label{def-act}
Let $t \in \CC$ , and consider the action of $G: = (\ZZ/2 \ZZ)^2$ on $\CC^3$
generated by $\sigma_1(u,v,w) = (u,v,-w)$, $\sigma_2(u,v,w) = (-u,-v,w)$.
Then the hypersurfaces $X_t = \{ (u,v,w)| w^2 = uv + t\}$ are $G$-invariant,
and the quotient $X_t / G $ is the hypersurface
$$ Y_t = Y_0 = Y :=   \{ (x,y,z)| z^2 = xy\} ,$$
which has a nodal singularity at the point $x=y=z=0$.

$X_t \ra Y$ is a family of finite bidouble coverings (Galois coverings with group  $G: = (\ZZ/2 \ZZ)^2$). 

We get in this way a flat family of (non flat) bidouble covers.

\end{prop}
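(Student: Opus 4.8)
The plan is to dispose of the four assertions in turn, the substance lying in the quotient computation and in the interplay of the two flatness statements. First I would verify $G$-invariance by direct substitution: since $\sigma_1$ sends $w\mapsto -w$ and fixes $u,v$, it preserves $w^2 = uv+t$; since $\sigma_2$ sends $(u,v)\mapsto(-u,-v)$ and fixes $w$, it preserves $uv$, hence the same equation. Thus $G=\langle\sigma_1,\sigma_2\rangle\cong(\ZZ/2\ZZ)^2$ acts on each $X_t$.

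For the quotient I would compute the invariant ring directly. Writing $A_t$ for the coordinate ring of $X_t$, i.e.\ $A_t=\CC[u,v,w]/(w^2-uv-t)$, every element is uniquely of the form $f(u,v)+g(u,v)\,w$. Invariance under $\sigma_1$ forces $g=0$, and invariance under $\sigma_2$ forces $f$ to be even in $(u,v)$; hence $A_t^G=\CC[u^2,uv,v^2]$. Setting $x:=u^2$, $y:=v^2$, $z:=uv$ gives $A_t^G\cong\CC[x,y,z]/(z^2-xy)$, the coordinate ring of $Y$, \emph{independently of $t$} — the crucial point being that $w$, which carries the only occurrence of $t$ through $w^2=uv+t$, lies in a nontrivial eigenspace and disappears from the invariants. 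Therefore $X_t/G=\mathrm{Spec}(A_t^G)=Y$ for all $t$, with quotient map $(u,v,w)\mapsto(u^2,v^2,uv)$.

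Next I would record that this map is finite (each $a\in A_t$ is integral over $A_t^G$ via $\prod_{g\in G}(T-g\cdot a)$, and $A_t$ is a finitely generated $\CC$-algebra, so $A_t$ is a finite $A_t^G$-module) and that it is a Galois $(\ZZ/2\ZZ)^2$-cover, i.e.\ a bidouble cover: decomposing $A_t$ into the four character eigenspaces of $G$, the invariants are spanned by $1$, while $w$, $\{u,v\}$ and $\{uw,vw\}$ span the three nontrivial isotypic summands over $\hol_Y$. Flatness of the \emph{family} over the $t$-line is then immediate, since the total space $\{w^2-uv-t=0\}\subset\CC^3\times\CC$ is smooth (isomorphic to $\CC^3$ via $(u,v,w)\mapsto(u,v,w)$ with $t=w^2-uv$) and the projection to the $t$-axis has two-dimensional fibres, hence is flat.

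The main obstacle — and the whole point of the statement — is to show that each individual cover $\pi\colon X_t\to Y$ is \emph{not} flat. I would argue by the length of the scheme-theoretic fibre over the node $0\in Y$. The generic fibre is a free $G$-orbit of $4$ points, so a flat finite cover would have $\pi_*\hol_{X_t}$ locally free of constant rank $4$. Over the node, however, one computes
$$A_t/(x,y,z)A_t=\CC[u,v,w]/(u^2,v^2,uv,\,w^2-t),$$
and since $\CC[u,v]/(u^2,v^2,uv)$ has $\CC$-basis $\{1,u,v\}$ while $\CC[w]/(w^2-t)$ has basis $\{1,w\}$, this fibre has length $6\neq 4$. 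The jump of fibre length, equivalently the failure of $\pi_*\hol_{X_t}$ to be locally free at the node, shows that $\pi$ is non-flat, completing the proof.
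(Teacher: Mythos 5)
Your proof is correct and follows essentially the same route as the paper: both identify the quotient by computing the ring of $G$-invariants ($u^2$, $v^2$, $uv$, $w^2$), the paper doing this once on the ambient $\CC^3\times\CC$ and you fibrewise on $A_t=\CC[u,v,w]/(w^2-uv-t)$. The one genuine addition is your fibre-length computation ($6\neq 4$ over the node) establishing that each individual bidouble cover $X_t\to Y$ is non-flat, a claim the paper asserts in the statement but does not prove; that computation is correct.
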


\begin{proof}
The invariants for the action of  $G$ on $\CC^3 \times \CC$ are:
$$ x: =u^2, y:  = v^2, z : = uv , s: = w^2, t.$$

Hence the family $\XX$  of the hypersurfaces $X_t$ is the inverse image of the
family of hypersurfaces $ s = z +t$ on the product
$$Y  \times \CC^2 = \{(x,y,z,s,t)| xy = z^2 \} .$$
Hence the quotient of $X_t$ is isomorphic to $Y$.

\end{proof}

The following is instead a rephrasing and a generalization of the discovery of
Atiyah  in the context of automorphisms, which is the main local content of the above theorem.
It says that the family of automorphisms of the canonical models $X_t$, i.e., the automorphism  group 
of the family $\XX$,   does
not lift, even after base change, to the family $\sS$ of minimal surfaces $S_{\tau}$.

\begin{lemma}\label{nolift}

Let $G$ be the group $G \cong (\ZZ/2 \ZZ)^2$ acting on $\XX$ trivially on
the variable
$\tau$, and else as follows on $\XX$:  the action of $G: = (\ZZ/2 \ZZ)^2$ on $\CC^3$
is generated by $\sigma_1(u,v,w) = (u,v,-w)$, $\sigma_2(u,v,w) = (-u,-v,w)$
(we set then $\sigma_3 := \sigma_1 \sigma_2$, so that
$\sigma_3(u,v,w) = (-u,-v,-w)$).

The invariants for the action of  $G$ on $\CC^3 \times \CC$ are:
$$ x: =u^2, y:  = v^2, z : = uv , s: = w^2, t.$$

Observe that the hypersurfaces $X_t = \{ (u,v,w)| w^2 = uv + t\}$ are $G$-invariant,
and the quotient $X_t / G $ is the hypersurface
$$ Y_t \cong Y_0 =  \{ (x,y,z)| z^2 = xy\} ,$$
which has a nodal singularity at the point $x=y=z=0$.

Let further $\sigma_4$ act by $\sigma_4(u,v,w,\tau) = (u,v,w,- \tau) $,
        let $G' \cong (\ZZ/2 \ZZ)^3$ be the group generated by $G$ and 
$\sigma_4$,
        and let $H \cong (\ZZ/2 \ZZ)^2$ be the subgroup $\{ Id, \sigma_2,
\sigma_1\sigma_4 , \sigma_3\sigma_4\}$.

The biregular action of $G'$ on $\XX$ lifts only to a birational
action on $\sS$,
respectively $\sS'$. The subgroup $H$ acts on $\sS$, respectively $\sS'$,
as a group of biregular automorphisms.

The elements of
$ G' \setminus H = \{ \sigma_1, \sigma_3, \sigma_4 , \sigma_2\sigma_4\}$
yield isomorphisms between $\sS$ and  $\sS'$.

The group $G$ acts on the punctured family $\sS \setminus S_0$,
in particular it acts on each fibre $S_{\tau}$.

Since $\sigma_4$ acts trivially on $S_0$,
the group $G'$ acts on $S_0$ through its direct summand $G$.

The biregular actions of $G$ on $\sS \setminus S_0$ and on $ S_0$
do not patch together to a biregular action on $\sS$, in particular
$\sigma_1$ and $\sigma_3$ yield birational maps which are
not biregular: they are called Atiyah flops (cf. \cite{atiyah}).
\end{lemma}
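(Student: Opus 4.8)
The plan is to reduce every assertion of the lemma to one computation: how each generator of $G'$ acts on the $\PP^1$-coordinate of the two small resolutions $\sS, \sS'$. The conceptual backbone is that $\XX = \{w^2-\tau^2 = uv\}$ is an ordinary three-dimensional node, which admits exactly the two small resolutions $\sS$ and $\sS'$. Each generator preserves $\XX$, since $\sigma_1$ only negates $w$, $\sigma_2$ negates $u,v$ (leaving $uv$ fixed), and $\sigma_4$ negates $\tau$, all leaving the equation $w^2-\tau^2=uv$ invariant. Hence every $g\in G'$ acts biregularly on $\XX$ and permutes the two-element set of small resolutions, giving a homomorphism $\epsilon\colon G' \to \ZZ/2\ZZ = \Aut\{\sS,\sS'\}$. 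I claim $\ker\epsilon = H$. Granting this, the kernel $H$ consists exactly of the elements admitting a biregular self-lift to $\sS$ (and to $\sS'$), while the nontrivial coset $G'\setminus H$ consists of the elements carrying $\sS$ biregularly onto $\sS'$; this yields at once the first three assertions.

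To evaluate $\epsilon$ I would recall that $\sS$ carries the coordinate $\xi = (w-\tau)/u = v/(w+\tau)$ and $\sS'$ the coordinate $\eta = (w+\tau)/u = v/(w-\tau)$, linked by $\xi\eta = v/u$. Substituting each generator into these expressions, one finds that $\sigma_2$ fixes $\sS$ with $\xi\mapsto-\xi$, whereas $\sigma_4$ (negating $\tau$) sends $\xi$ to $(w+\tau)/u$, i.e. carries $\sS$ to $\sS'$, and likewise $\sigma_1,\sigma_3$ each carry $\sS$ to $\sS'$. In every case the induced map on the $\PP^1$-coordinate is $\xi\mapsto\pm\xi$, a regular automorphism of $\PP^1$, so the indicated lift is biregular. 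This gives $\epsilon(\sigma_2)=0$ and $\epsilon(\sigma_1)=\epsilon(\sigma_3)=\epsilon(\sigma_4)=1$; extending $\epsilon$ as a homomorphism on $G'\cong(\ZZ/2\ZZ)^3$ then produces $\ker\epsilon = \{Id,\sigma_2,\sigma_1\sigma_4,\sigma_3\sigma_4\}=H$. The crucial contrast is that an element of $G'\setminus H$, say $\sigma_1$, if forced to remain on $\sS$, would have to send $\xi$ to $-(w+\tau)/u=-v/(u\xi)$, which is not regular along the exceptional $\PP^1$ over the node, so no biregular self-lift to $\sS$ exists.

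For the fibrewise statements I would use that $\sS\to\XX$ is an isomorphism away from the node, which sits over $\tau=0$. Thus over the punctured disk $\sS\setminus S_0\cong\XX|_{\tau\neq0}$, on which $G$ acts biregularly by the restriction of Proposition \ref{def-act}, so $G$ acts on $\sS\setminus S_0$ and on each smooth fibre $S_\tau$. On the central fibre $S_0=\sS_{\tau=0}$, the minimal resolution of $X_0=\{w^2=uv\}$ with coordinates $(u,\xi)$, $w=\xi u$, $v=\xi^2u$, the map $\sigma_4$ acts trivially because it fixes $u,v,w$ and the value $\tau=0$; hence $G'=G\oplus\langle\sigma_4\rangle$ acts on $S_0$ through its summand $G$ (and $G$ acts biregularly there, automorphisms of $X_0$ lifting uniquely to its minimal resolution).

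The heart, and the only genuinely non-formal step, is the non-patching. Here both restrictions of $\sigma_1$ — to the punctured family and to $S_0$ — are biregular, yet they do not glue to a biregular automorphism of $\sS$. I would argue by contradiction: the biregular isomorphism $\bar\sigma_1\colon\sS\to\sS'$ found above restricts, over $\tau\neq0$, to the $\sigma_1$-action on $\XX|_{\tau\neq0}$ under the identification $\sS|_{\tau\neq0}=\sS'|_{\tau\neq0}$; if $\sigma_1$ also admitted a biregular self-lift $\tilde\sigma_1\colon\sS\to\sS$, then $\bar\sigma_1\circ\tilde\sigma_1^{-1}\colon\sS\to\sS'$ would be biregular and equal to the identity on the dense open $\sS\setminus S_0$, i.e. a biregular extension of $\iota\colon\sS\dashrightarrow\sS'$. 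But $\iota$ is exactly the Atiyah flop, which the text already notes does not extend (the relation $\xi u=v\eta^{-1}$ is indeterminate along the exceptional curve). This contradiction rules out $\tilde\sigma_1$, and the same applies to $\sigma_3$; so $\sigma_1,\sigma_3$ are the asserted flops. I expect this gluing argument — reconciling biregularity on the punctured family and on the special fibre with global non-biregularity — to be the main obstacle, since it is the one point that is not a direct coordinate substitution but rests on uniqueness of biregular extensions together with the non-extendability of $\iota$.
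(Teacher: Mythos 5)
Your proposal is correct and follows essentially the route the paper takes: the same substitutions of the generators into $\xi=(w-\tau)/u$ and $\eta=(w+\tau)/u$ identify $H$ as the stabilizer of $\sS$ and the coset $G'\setminus H$ as the elements interchanging $\sS$ and $\sS'$, and the failure of patching is reduced, exactly as in the text, to the non-extendability of the flop $\iota$ (the relation $\xi u = v\eta^{-1}$). Your homomorphism $\epsilon\colon G'\to\ZZ/2\ZZ$ is just a systematic packaging of the computation the paper leaves implicit; the paper supplements the key non-extension point with two alternative arguments you do not need (the Dehn-twist/normal-bundle argument for the $(-2)$-curve, and the observation that a biregular $G$-action on $\sS$ would force all quotients $S_{\tau}/G$ to be homeomorphic, whereas $S_0/G$ is the blow-up $Y'$ while $S_{\tau}/G=Y$ for $\tau\neq 0$).
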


Another more geometrical way to see that there is no $G$-action on
the family $\sS$ is the following: if $G$ would act on $\sS$, and trivially on the base,
then the fixed loci for the group elements would be submanifolds with a smooth map onto 
the parameter space $\CC$ with parameter $\tau$. Hence all the quotients $ S_{\tau} / G$
would be homeomorphic.

But for BUNS the  quotient of $S_0$ by $G$ is the blow up $Y'$ of $Y$ at the origin,
while for $\tau \neq 0$, $ S_{\tau} / G$ is just $Y$! \footnote{In the case of BUNS, $Y$ is a nodal Del Pezzo surface of degree $4$,
whereas in the local analysis we use the same notation $Y$ for the quadric cone, which is the germ of the nodal Del Pezzo surface at 
the nodal singular point. }
In fact, if one wants to construct the family
of smooth models as a family of bidouble covers of a  smooth surface, one has to take
the blown up surface $Y'$ and its exceptional divisor $N$ ($N$ is called the nodal curve).

\begin{rem}
i) The simplest way to view $X_t$ is to  see $\CC^2$ as a double cover
of $Y$ branched only at the origin,
and then $X_t$ as a family of double covers of $\CC^2$
branched on the curve $ uv + t = 0$, which acquires a double point for $t=0$.

ii) If we pull back the bidouble cover $X_t$ to $Y'$, and we
normalize it,
we can see that the three branch divisors, corresponding to the fixed points for the three
non trivial elements of the group $G$, are as follows:
\begin{itemize}
\item
$D_3$ is, for $t=0$, the nodal curve $N$, and is the empty divisor
for $t\neq 0$;
\item
$D_1$ is, for $t \neq 0$, the inverse image of the curve $z + t = 0$;
while, for
$t=0$, it is only its strict transform, i.e. a divisor  made up of
$F_1, F_2$,  the proper transforms of the two branch lines 
(\{x=z=0\}, resp. \{y=z=0\}) on the quadric cone $Y$
\item
$D_2$ is an empty divisor for $t=0$, and the nodal curve $N$
for $t\neq 0$.

\end{itemize}

\end{rem}

The above remark shows then that  in order to construct the smooth models, one has first of all
to take a discontinuous family of branch divisors; and, moreover, for $ t \neq 0$,
we obtain then a non minimal surface which contains two (-1)-curves ($S_t = X_t$
is then gotten by contracting these two (-1)-curves).

\medskip

\subsection{Teichm\"uller space for surfaces of general type}

Recall the fibre product considered by Burns and Wahl:

\begin{equation*}
\xymatrix{
\Def(S) \ar[d]\ar[r] & Def (S_{Exc(\pi)})= :  \mathcal{L}_S \cong \CC^{\nu}, \ar[d]^{\lambda} \\
\Def(X) \ar[r] & Def (X_{Sing X}) = : \mathcal{L}_X \cong \CC^{\nu} ,}
\end{equation*}

This gives a map $f \colon  Def (S) \ra Def (X) / Aut (X)$ of the Kuranishi space of $S$ into an open set of a quasiprojective variety,
which factors through Teichm\"uller space.

\begin{theo}\label{kur=teich-surf}
Let $S$ be the minimal model of a surface of general type.

  Then the continuous map $\pi \colon  Def (S)  \ra \sT (M)_S$ is a local homeomorphism
  between Kuranishi space and Teichm\"uller space if
  
1) $Aut (S)$  is a trivial group, or
  
 2)  $K_S$ is ample and $S$ is rigidified.
\end{theo}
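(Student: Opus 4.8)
The plan is to prove that $\pi$ is a local homeomorphism by showing it is \emph{injective} on a small neighbourhood of the base point, which suffices: as recorded in the Remark following Proposition \ref{kur=teich}, the map $\pi\colon \Def(S)\to \sT(M)_S$ is always continuous, open and surjective, so local injectivity already gives a local homeomorphism. Throughout I would exploit that $S$ is of general type, so its automorphism group is finite (Matsumura) and $H^0(\Theta_S)=0$; this vanishing persists in the family, so Wavrik's condition holds and the Kuranishi family is universal at every nearby point. Two consequences are used repeatedly. First, by Ehresmann (compare Proposition \ref{actiontype}(3)) the family is differentiably trivial over a contractible neighbourhood $U$ of $0$, so any fibrewise isomorphism may be read as a self-diffeomorphism of the fixed manifold $M$, whose isotopy class and cohomology action are then defined. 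Second, by universality together with finiteness of automorphisms, for $t\in U$ one has $\Aut(S_t)=\{g\in\Aut(S_0)\mid g\cdot t=t\}$, the stabiliser of $t$ for the action of $\Aut(S_0)$ on $\Def(S)$, and corresponding elements of $\Aut(S_t)$ and $\Aut(S_0)$ are isotopic through the family.

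Now I would take $t,t'\in U$ and $\phi\in\sD iff^0(M)$ inducing a biholomorphism $\phi\colon S_t\to S_{t'}$, and first transport this identification into a Hausdorff target, using the continuous factorization $f\colon\Def(S)\to\Def(X)/\Aut(X)$ through $\pi$ recorded just before the statement. By Proposition \ref{actiontype}(1), $\phi$ descends to an isomorphism $X_t\cong X_{t'}$ of canonical models, so $t$ and $t'$ have the same image in the Gieseker germ $\Def(X)/\Aut(X)$, which is the germ of a quasiprojective scheme and hence separated. Writing $\kappa\colon\Def(S)\to\Def(X)$ for the finite Burns--Wahl morphism, this says $\kappa(t')=g\cdot\kappa(t)$ for some $g\in\Aut(X)=\Aut(S)$. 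In case $1$, $\Aut(S)$ is trivial, so $g=\mathrm{id}$ and $\kappa(t)=\kappa(t')$; by the fibre product description of Burns--Wahl the fibre of $\kappa$ is a single orbit of the Weyl group $W$, so $t'=w\cdot t$ for some $w\in W$. Both $S_t$ and $S_{t'}$ are the minimal resolution of the single canonical model $X_{\kappa(t)}$, and the monodromy isomorphism $\psi_w\colon S_t\to S_{t'}$ realising the covering $\Def(S)\to\Def(X)$ acts on $H^*(M,\ZZ)$ by $w$ through Picard--Lefschetz (the content of simultaneous resolution, Theorem \ref{simultaneous}). Since $\Aut(S_t)\subset\Aut(S_0)=\{\mathrm{id}\}$, the isomorphisms $\phi$ and $\psi_w$ differ by an element of $\Aut(S_t)$, hence $\phi=\psi_w$; but $\phi$ acts trivially on cohomology while $\psi_w$ acts by $w$, and the $W$-action on the lattice spanned by the $(-2)$-classes is faithful, forcing $w=\mathrm{id}$ and $t=t'$.

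In case $2$, ampleness of $K_S$ forces $S=X$, so $W$ is trivial, $\kappa=\mathrm{id}$ and $t'=g\cdot t$. Let $\tilde g\colon S_t\to S_{gt}$ be the isomorphism coming from the equivariant action of $g$ on the universal family; it is isotopic to $g$ through the family, so $\beta:=\tilde g^{-1}\circ\phi\in\Aut(S_t)$ is isotopic to $g^{-1}$. Transporting $\beta$ to the corresponding element $\beta'\in\Aut(S_0)$ (isotopic to it), one gets $\beta'g\in\Aut(S_0)\cap\sD iff^0(M)$, which is trivial precisely because $S$ is \emph{rigidified}; hence $\beta'=g^{-1}$. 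As $\beta'\in\Aut(S_t)$ it stabilises $t$, so $g^{-1}\cdot t=t$, that is $t'=g\cdot t=t$. In both cases $\pi$ is injective near $0$, and with openness this yields the asserted local homeomorphism.

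The main obstacle, and the step demanding the most care, is the bookkeeping of isotopy classes and cohomology actions through the differentiable trivialisation: precisely, establishing that $\Aut(S_t)$ is identified with the stabiliser of $t$ in $\Aut(S_0)$ compatibly with isotopy (this is where universality, guaranteed by $H^0(\Theta_S)=0$, is essential), and verifying that the monodromy isomorphism $\psi_w$ of case $1$ acts by the Weyl element with faithful cohomology representation. These are exactly the points where the hypotheses enter --- rigidification in case $2$ to annihilate the residual automorphism $\beta'g$ isotopic to the identity, and faithfulness of the Picard--Lefschetz action together with triviality of $\Aut(S)$ in case $1$.
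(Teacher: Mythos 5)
Your proposal is correct and follows essentially the same route as the paper: reduce to local injectivity, push the two points into the Hausdorff germ $\Def(X)/\Aut(X)$, and then in case 1 derive a contradiction from the nontrivial Weyl-group action on the cohomology of the exceptional divisor versus the trivial action of a diffeomorphism isotopic to the identity, while in case 2 invoke rigidification to kill the residual automorphism. Your write-up merely makes explicit some bookkeeping (universality via $H^0(\Theta_S)=0$, the identification of $\Aut(S_t)$ with a stabiliser in $\Aut(S_0)$) that the paper leaves implicit.
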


\Proof
We need only to show that $\pi$ is injective. Assume the contrary: then there are two points $t_1, t_2 \in Def(S)$
yielding surfaces $S_1$ and $S_2$ isomorphic through a diffeomorphism $\Psi$ isotopic to the identity.

By the previous remark, the images of $t_1, t_2 $ inside $Def (X) / Aut (X)$ must be the same.

Case 1): there exists then an element $w$ of the Weyl group of $\lambda$ carrying $t_1 $ to $t_2$,
hence the composition of $w$ and $\Psi$ yields an automorphism of $S_1$. Since $Aut(S_1) = Aut (X_1)$
and the locus of canonical models with non trivial automorphisms is closed, we conclude that, taking $Def(S)$ as a suitably
small germ, then this automorphism is the identity. This is however a contradiction,
since $w$ acts non trivially on the cohomology of the exceptional divisor, while $\Psi$ acts trivially.  

Case 2) : In this case there is an automorphism $g$ of $S$ carrying $t_1 $ to $t_2$, and again the composition of $g$ and $\Psi$ yields an automorphism of $S_1$.  We apply the same argument, since $g$ is not isotopic to the identity by our assumption.

\qed

\begin{rem}
With more work one should be able to treat the more general case where we  assume that $Aut (S)$ is non trivial, but $S$ is 
rigidified. In this case one should show that a composition $ g \circ w$ as above is not isotopic to the identity.

The most interesting question is however whether every surface of general type is rigidified.

\end{rem}

\section{Connected components of moduli spaces and arithmetic of moduli spaces for surfaces}

\subsection{Gieseker's moduli space and the analytic moduli spaces}

As we saw, all 5-canonical models of surfaces of general type with invariants $ K^2$, $\chi$ occur
in a big family parametrized by an open set of the Hilbert scheme $ \HHH ^0$ parametrizing 
subschemes with Hilbert polynomial $ P (m)= 
\chi  + \frac{1}{2}( 5 m -1)  5 m  K^2 ,$ namely the open set

$$ \HHH ^0 (\chi, K^2) : = \{ \Sigma | \Sigma {\rm \ is\ reduced \ with \ only \
R.D.P. 's \ as \
singularities \ }\}.
$$

Indeed, it is not necessary to consider the
5-pseudo moduli space of surfaces of general type with
given invariants $ K^2$, $\chi$, which was defined  as the closed subset
$ \HHH _0 \subset \HHH^0$,
$$  \HHH _0 (\chi, K^2) : = \{ \Sigma \in \HHH^0| \omega_{\Sigma}^{
\otimes 5} \cong
   \hol_{\Sigma}(1)   \} .$$
   
   At least, if we are only interested about having a family which contains all surfaces of general type,
   and are not interested about taking the quotient by the projective group.
   
   Observe however that if $\Sigma \in  \HHH ^0 (\chi, K^2)$, then $\Sigma$  is the canonical model
   $X$ of a surface of general type , embedded by a linear system $|D|$, where $D$ is numerically equivalent 
   to $5K_S$, i.e., $D = 5 K_S+ \eta$, where $ \eta$ is numerically equivalent to $0$.
   
   Therefore the connected components $\sN$, respectively the irreducible components $\sZ$ of the Gieseker moduli space
   correspond to the connected , resp.  irreducible, components of $  \HHH _0 (\chi, K^2)$, and in turn to
   the connected , resp.  irreducible, components of $  \HHH ^0 (\chi, K^2) $ which intersect
 $  \HHH _0 (\chi, K^2)$.
 
 We shall however, for the sake of brevity, talk about connected components $\sN$ of the Gieseker moduli space $\frak M^{can}_{a,b}$
 even if these do not really parametrize families of canonical models.
 
We refer to \cite{perugia} for a more ample discussion of the basic ideas which we are going to sketch here.

$\frak M^{can}_{a,b}$ has a finite number of connected components, and 
these parametrize the 
deformation classes
of surfaces of general type. By the classical theorem of Ehresmann 
(\cite{ehre}),
deformation equivalent varieties are diffeomorphic, and moreover, by a diffeomorphism carrying the canonical class 
to the canonical class.

Hence, fixed the two numerical invariants $\chi (S) = a, K^2_S = b$,
which are determined
by the topology of $S$ (indeed, by the  Betti numbers of $S$),
we have a finite number of
differentiable types.

It is clear that the analytic moduli space $\frak M (S)$ that we defined at the onset is then the union of a finite number
of connected components of $\frak M^{can}_{a,b}$. But how many, and how?

A very optimistic guess was: one.

A basic question was really whether a moduli space $\mathfrak M (S)$
would correspond to a unique connected component of the Gieseker moduli space,
and this question was abbreviated as the DEF =  DIFF question.

I.e., the question whether differentiable equivalence and deformation equivalence would coincide for surfaces.

I conjectured (in \cite{katata}) that the answer should be negative, on the basis of some families of simply connected
surfaces of general type constructed in \cite{cat1}: these were then homeomorphic by the results of Freedman (see \cite{free}, and \cite{f-q}),
and it was then relatively easy to show then (\cite{cat3}) that there were many connected components of the moduli space
corresponding to homeomorphic but non diffeomorphic surfaces. It looked like the situation should be similar
even if one would fix the diffeomorphism type.

Friedman and Morgan instead made 
the `speculation' that the answer to the DEF= DIFF question should be positive (1987) (see \cite{f-m1}), motivated by the new examples of homeomorphic 
but not diffeomorphic surfaces discovered by Donaldson (see \cite{don} for a survey on this topic).

The question was finally answered in the negative, and  in every
possible way (\cite{man4},\cite{k-k},\cite{cat4},\cite{c-w},\cite{bcg}.

\begin{theo}
(Manetti '98, Kharlamov -Kulikov 2001, C. 2001, C. - Wajnryb 2004,
Bauer- C. - Grunewald 2005 )

The Friedman- Morgan speculation
does not hold true and the  DEF= DIFF question has a negative answer.
\end{theo}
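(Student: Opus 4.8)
The plan is to exhibit, for suitable invariants $(a,b)=(\chi,K^2)$, a pair of surfaces of general type $S,S'$ that are orientedly diffeomorphic but lie in two distinct connected components of $\mathfrak M^{can}_{a,b}$; since deformation equivalent surfaces are orientedly diffeomorphic by Ehresmann's theorem, this is exactly what is needed to refute DEF $=$ DIFF (and with it the Friedman--Morgan speculation). Among the several independent constructions available, I would organise the argument around \emph{surfaces isogenous to a product}, i.e. free quotients $S=(C_1\times C_2)/G$ of a product of two compact curves of genus $\geq 2$ by a finite group $G$, because for these surfaces the two sides of the problem decouple cleanly: the oriented diffeomorphism type is governed by discrete combinatorial data of the two $G$-actions, whereas the point in moduli is governed by the complex structures on the $C_i$, which can be moved by a field automorphism without altering that combinatorial data.

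First I would pin down the topological side. The universal cover of such an $S$ is $\HH\times\HH$ and $\pi_1(S)$ sits in an extension $1\to\pi_1(C_1)\times\pi_1(C_2)\to\pi_1(S)\to G\to 1$; the datum determining everything is the pair of \emph{generating systems} describing $C_i\to C_i/G$ as branched $G$-covers, taken up to $\Aut(G)$, Hurwitz moves and the interchange of the two factors. The key claim to establish is that two such surfaces whose combinatorial data agree up to these moves are orientedly diffeomorphic: one builds a $G$-equivariant diffeomorphism of $C_1\times C_2$ with $C_1'\times C_2'$ compatible with the two actions and passes to the quotient. In the \emph{rigid} (Beauville) case, where each $C_i/G\cong\PP^1$ is branched over exactly three points so that $S$ is a single reduced point of moduli, this is cleanest, and there is an even more elementary instance of diffeomorphism I would use as a base case: complex conjugation. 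Indeed $S$ and its conjugate $\overline S$ are always orientedly diffeomorphic, since the identity $S\to\overline S$ is anti-holomorphic and an anti-holomorphic map is orientation preserving in complex dimension $2$ (the two complex orientations differ by $(-1)^2=+1$).

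Next I would supply the rigidity--nonisomorphism input that separates the two surfaces in moduli. For Beauville surfaces $S$ is infinitesimally rigid, so its connected component in $\mathfrak M^{can}_{a,b}$ is the single reduced point $[S]$; hence deformation equivalence among Beauville surfaces reduces to biholomorphism. It therefore suffices to produce a Beauville surface $S$, defined over $\overline{\QQ}$, which is \emph{not} biholomorphic to $\overline S$ (a ``chiral'' surface): then $S$ and $\overline S$ are orientedly diffeomorphic by the previous step but lie in distinct components, refuting DEF $=$ DIFF. More generally I would let $\sigma\in\mathrm{Gal}(\overline{\QQ}/\QQ)$ act on the defining generating system; since $\sigma$ only permutes branch points and acts on the group-theoretic data, it preserves the combinatorial type, so by the diffeomorphism step $S^\sigma$ is orientedly diffeomorphic to $S$, while for suitable $S$ and infinitely many $\sigma$ the surfaces $S^\sigma$ are pairwise non-isomorphic, yielding arbitrarily many diffeomorphic surfaces in distinct components.

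The main obstacle is the diffeomorphism statement together with the construction of genuinely chiral examples. Proving that combinatorially equivalent surfaces isogenous to a product are orientedly diffeomorphic requires care in assembling the equivariant diffeomorphism and in tracking orientations through the quotient; the complex-conjugation shortcut bypasses this but then forces one to exhibit a rigid surface provably not isomorphic to its conjugate, which is an arithmetic computation with the generating triple (checking that no automorphism of $G$ together with a M\"obius transformation and complex conjugation carries the triple to itself). Verifying non-isomorphism of the Galois translates $S^\sigma$ --- equivalently that the Galois orbit is large --- is the delicate point, and is where the fine arithmetic of the chosen triangle data enters; this is precisely the content supplied by the Bauer--Catanese--Grunewald examples, while Manetti's and the Kharlamov--Kulikov constructions realise the same conclusion by different (abelian-cover, respectively ball-quotient-type) means.
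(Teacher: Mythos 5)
Your proposal is correct and follows essentially the route the paper itself gestures at: the theorem is stated there without proof, merely citing \cite{man4}, \cite{k-k}, \cite{cat4}, \cite{c-w}, \cite{bcg}, and your argument via surfaces isogenous to a product, complex conjugation, and rigid (Beauville) surfaces not biholomorphic to their conjugates is precisely the content of the last two of these references, supported within the paper by theorem \ref{fabiso}. The one point to keep sharp is that for the rigid case you need genuine chirality $S\not\cong\overline S$, which is strictly stronger than the non-existence of a real structure (an antiholomorphic \emph{involution}); you do state the correct condition.
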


In my joint work with
Bronek Wajnryb (\cite{c-w}) the question was also shown to have a negative answer even for simply connected surfaces.

I showed later (\cite{cat02}) that each minimal surface of general type $S$ has a natural symplectic structure with
class of the sympletic form equal to $c_1 (K_S)$, and in such a  way that to each connected component $\sN$ of the moduli space one can associate the pair of a  differentiable manifold with a symplectic structure, unique up to symplectomorphism. 

Would this further datum determine a unique connected component, so that  DEF = SIMPL ?

This also turned out to have a negative answer (\cite{cat09}).

\begin{theo}
Manetti surfaces provide  counterexamples to the  DEF = SIMPL question. 
\end{theo}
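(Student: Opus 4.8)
The plan is to exhibit two Manetti surfaces $S_0$ and $S_1$ which are \emph{not} deformation equivalent, yet which become symplectomorphic once each is equipped with its canonical symplectic structure in the sense of \cite{cat02}. Since by \cite{cat02} the canonical symplectic structure is a deformation invariant attached to each connected component $\sN$ of the Gieseker moduli space $\mathfrak M^{can}_{\chi, K^2}$, so that deformation equivalence always implies symplectomorphism of the canonical structures, producing such a pair is precisely what shows that the converse implication fails, i.e.\ that DEF $=$ SIMPL has a negative answer.

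First I would invoke the already-established negative answer to DEF $=$ DIFF in the sharp form coming from Manetti's construction: inside a suitable $\mathfrak M^{can}_{\chi, K^2}$ there are surfaces $S_0, S_1$ lying in distinct connected components, hence not deformation equivalent, together with an orientation-preserving diffeomorphism $F \colon M_0 \ra M_1$ of the underlying differentiable four-manifolds which moreover carries the canonical class to the canonical class, $F^* c_1(K_{S_1}) = c_1(K_{S_0})$. It is essential for the symplectic refinement that $F$ respects the canonical class, and this compatibility is exactly the content of Ehresmann's theorem combined with the diffeomorphism statement.

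Next, pulling back the canonical K\"ahler form $\omega_1$ of $S_1$ by $F$, I obtain on the fixed manifold $M_0$ two symplectic forms, namely $\omega_0$ and $F^*\omega_1$, representing the \emph{same} de Rham class, since $[\omega_0] = c_1(K_{S_0}) = F^* c_1(K_{S_1}) = [F^*\omega_1]$. The heart of the matter is then to upgrade this cohomological agreement to a genuine symplectomorphism, i.e.\ to produce a diffeomorphism $\Phi$ of $M_0$ with $\Phi^*(F^*\omega_1) = \omega_0$; composing $F$ with $\Phi^{-1}$ then yields the desired symplectomorphism between the two canonical symplectic manifolds. The standard vehicle is Moser's theorem: if $\omega_0$ and $F^*\omega_1$ can be joined by a path of symplectic forms all lying in the one fixed cohomology class, they are symplectomorphic. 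Establishing such a path is where the explicit nature of Manetti's surfaces must enter, since these surfaces are assembled in a uniform way from the same differentiable pieces via an explicit covering construction, so that $F$ can be arranged compatibly with the K\"ahler geometry of those pieces and the two forms connected through symplectic forms adapted to the common decomposition.

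The main obstacle is precisely this symplectic upgrade: a diffeomorphism carrying $c_1(K)$ to $c_1(K)$ is very far from being automatically a symplectomorphism, and Moser's argument requires a path of genuinely \emph{symplectic} forms, not merely closed ones, in a fixed class — a nontrivial global statement rather than a general-position fact. Controlling this is the delicate point, and it is exactly here that one exploits the specific geometry of the Manetti examples to guarantee that the canonical symplectic forms of deformation-inequivalent members are symplectomorphic, thereby producing the counterexamples to DEF $=$ SIMPL.
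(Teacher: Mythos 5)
The paper itself gives no proof of this theorem --- it is stated as a pointer to \cite{cat09} and \cite{cime} --- so your proposal has to stand on its own, and as it stands it has a genuine gap at exactly the point you flag as ``the heart of the matter''. Your strategy is: take Manetti's diffeomorphism $F$ between two deformation--inequivalent surfaces, observe that $\omega_0$ and $F^*\omega_1$ are cohomologous, and then invoke Moser to upgrade to a symplectomorphism. But Moser's theorem needs a path of \emph{symplectic} forms in the fixed class joining the two, and on a $4$--manifold there is no general mechanism producing such a path between two cohomologous symplectic forms (indeed cohomologous symplectic forms need not be deformation equivalent, let alone isotopic); the sentence ``this is where the explicit nature of Manetti's surfaces must enter'' is precisely the missing proof. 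A secondary inaccuracy: you attribute the existence of $F$ preserving $c_1(K)$ to ``Ehresmann's theorem combined with the diffeomorphism statement'', but Ehresmann only applies \emph{within} a connected component of the moduli space; since $S_0$ and $S_1$ lie in different components, the diffeomorphism and its compatibility with the canonical classes come from Manetti's explicit construction, not from Ehresmann.

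The actual argument of \cite{cat09} does not proceed by first fixing a diffeomorphism and then correcting it. Manetti's surfaces are iterated double covers of blow--ups of the quadric, and the passage from one deformation type to another is realized by degenerating to a common singular surface (letting branch curves acquire singularities) and resmoothing in a different irreducible component of the deformation space. The point of \cite{cat02}, \cite{cat09} is that the canonical symplectic structure of each smoothing is determined, up to symplectomorphism, by the symplectic geometry of the singular central fibre away from the singular points together with the Milnor fibres glued in along their contact boundaries; since these local data agree for the two smoothings, the symplectomorphism is constructed directly from the degeneration, with no appeal to a Moser path in a fixed cohomology class. If you want to salvage your outline, the work you postponed to ``the explicit nature of Manetti's surfaces'' is essentially this entire degeneration--and--regluing argument, so it cannot be treated as a final technical adjustment.
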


I refer to  \cite{cime} for a  rather comprehensive treatment of the above questions.

Let me just observe that the Manetti surfaces are not simply connected, so that the DEF=SYMPL question is still open for
the case of simply connected surfaces. Concerning the question of canonical symplectomorphism of algebraic surfaces,
Auroux and Katzarkov (\cite{a-k}) defined asymptotic braid monodromy invariants of a symplectic manifold, extending  old 
ideas of Moishezon (see \cite{moi}). 

Quite recent work, not covered in  \cite{cime}, is my joint work with L\"onne and Wajnryb (\cite{clw}), which  investigates in this direction the braid monodromy invariants (especially the `stable' ones)
for the surfaces introduced in \cite{cat1}.

 \subsection{Arithmetic of moduli spaces}

A basic remark is that all these schemes are defined by equations involving only $\ZZ$ coefficients,
since the defining equation of the Hilbert scheme is a rank condition on a multiplication map
(see for instance \cite{green}), and similarly the condition  $ \omega_{\Sigma}^{
\otimes 5} \cong
   \hol_{\Sigma}(1) $ is also closed (see \cite{abvar}) and defined over $\ZZ$..
   
It follows that  the absolute Galois group  $ Gal (\overline{\QQ}, \QQ)$ acts on the Gieseker
moduli space $\frak M^{can}_{a,b}$.

To explain how it concretely acts, it suffices to recall the
notion of a conjugate variety.

\begin{rem}
1) $\phi \in Aut (\CC)$ acts on $ \CC [z_0, \dots z_n]$,
by sending $P (z) = \sum_{i =0}^n \ a_i z ^i
\mapsto  \phi (P) (z) : = \sum_{i =0}^n \ \phi (a_i) z ^i$.

2) Let $X$ be as above a projective variety
$$X  \subset  \PP^n_\CC,  X : = \{ z | f_i(z) = 0 \ \forall i \}.$$

The action of $\phi$ extends coordinatewise to $ \PP^n_\CC$,
and carries $X$ to another variety, denoted $X^{\phi}$,
and called the {\bf conjugate variety}. Since $f_i(z) = 0 $ implies
$\phi (f_i)(\phi (z) )= 0 $, we see that

$$  X^{\phi}  = \{ w | \phi (f_i)(w) = 0 \  \forall i \}.$$

\end{rem}

If $\phi$ is complex conjugation, then it is clear that the variety
$X^{\phi}$ that we obtain is diffeomorphic  to $X$, but in general,
what happens when $\phi$ is not continuous ?

Observe that, by the theorem of Steiniz,  one has a surjection $ Aut (\CC) \ra Gal(\bar{\QQ} /\QQ)$,
and by specialization the heart of the question concerns the action of $Gal(\bar{\QQ} /\QQ)$
 on varieties $X$ defined over $\bar{\QQ}$.

For curves, since in general the dimensions of spaces of
differential forms of a fixed degree and without poles are the same
for $X^{\phi}$ and $X$, we shall obtain a curve of the same genus,
hence $X^{\phi}$ and $X$ are diffeomorphic.

But for higher dimensional varieties this breaks down,
as discovered by Jean  Pierre Serre in the 60's (\cite{serre}),
who proved the existence of a field automorphism  $\phi \in
Gal(\bar{\QQ} /\QQ)
$, and a variety $X$ defined over $\bar{\QQ}$ such that
$X$ and the Galois conjugate variety $X^{\phi}$  have
  non isomorphic fundamental groups.

In  work   in collaboration with Ingrid Bauer and Fritz
Grunewald (\cite{almeria}, \cite{bcgGalois}) we discovered  wide classes of algebraic surfaces
for which the same phenomenon holds.

A striking result in a similar direction was obtained by Easton and Vakil (\cite{east-vak})

\begin{theo}

The absolute Galois group $Gal(\bar{\QQ} /\QQ)$
acts faithfully
on the  set of irreducible 
components of the (coarse) moduli space of canonical surfaces of general type,
$$ \frak M^{can} : = \cup_{a,b \geq 1} \frak M^{can}_{a,b}. $$

\end{theo}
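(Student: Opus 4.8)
The plan is to reduce the faithfulness assertion to the known faithfulness of the Galois action at the level of curves, and to transport it to surfaces by means of \emph{rigid} surfaces, for which an irreducible component of $\frak M^{can}$ is simply an isolated reduced point. Since, as recalled above, $\frak M^{can}$ is a scheme defined over $\ZZ$, every $\phi \in Gal(\bar\QQ/\QQ)$ acts as an automorphism of the scheme, hence permutes its irreducible components, carrying the class $[X]$ of a variety to the class $[X^\phi]$ of its conjugate. If $S$ is rigid, i.e. $H^1(\Theta_S)=0$, then by Kuranishi's theorem $\Def(S)$ is a reduced point, so $[S]$ is an isolated point of the moduli space and thus an irreducible component $\{[S]\}$ all by itself. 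For such an $S$ the component $\{[S]\}$ is fixed by $\phi$ if and only if $S \cong S^\phi$. Therefore it suffices, given $1\neq\phi\in Gal(\bar\QQ/\QQ)$, to produce a rigid surface $S$ defined over $\bar\QQ$ with $S\not\cong S^\phi$.

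First I would take for $S$ a Beauville surface, i.e. a surface isogenous to a product $S=(C_1\times C_2)/G$, with $G$ finite acting freely on the product and with each quotient $C_i/G\cong\PP^1$ branched over the three rational points $\{0,1,\infty\}$. These surfaces are rigid because $\PP^1$ with three marked points has neither moduli nor infinitesimal deformations, they have invariants $\chi=1$, $K^2=8$ and so lie in $\frak M^{can}_{1,8}$, and they are manifestly defined over $\bar\QQ$; moreover the construction commutes with Galois conjugation, so $S^\phi=(C_1^\phi\times C_2^\phi)/G$ is again a Beauville surface with the same group. Each factor $C_i$ is a \emph{triangle curve}, equivalently a dessin determined by a generating triple in $G$, and it is recovered intrinsically as the general fibre of one of the two isotrivial fibrations $S\to\PP^1$. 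By the structure theory of surfaces isogenous to a product this unordered pair of fibrations is a biholomorphic invariant; consequently $S\cong S^\phi$ forces the unordered pairs of general fibres to agree, $\{C_1,C_2\}\cong\{C_1^\phi,C_2^\phi\}$.

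Next I would invoke the faithfulness of the Galois action on curves. By Belyi's theorem a curve is defined over $\bar\QQ$ exactly when it admits a map to $\PP^1$ branched only over $\{0,1,\infty\}$, and the absolute Galois group acts faithfully on the resulting set of triangle curves (dessins d'enfants). Thus for the given $\phi\neq 1$ there is a triangle curve $C$ with $C^\phi\not\cong C$. I would then realize $C$ as one factor of a Beauville surface, choosing the second factor of a genuinely different ramification type so that the two factors cannot be interchanged; with this choice the previous paragraph upgrades $S\cong S^\phi$ to $C\cong C^\phi$, a contradiction. Hence $S\not\cong S^\phi$, and $\phi$ moves the component $\{[S]\}$, proving faithfulness.

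The main obstacle is precisely the last step: guaranteeing that a finite group $G$ simultaneously (i) admits a Beauville structure acting freely on the product and (ii) realizes, among its triangle curves, a $\phi$-moved dessin for \emph{every} nontrivial $\phi$. Faithfulness of the Galois action on \emph{all} dessins does not automatically restrict to those dessins realizable inside Beauville groups, so one must exhibit a sufficiently rich supply of Beauville groups (for instance suitable families of alternating groups, or of groups built from $(\ZZ/n)^2$) whose triangle curves exhaust enough of the dessin combinatorics to detect every Galois element, while keeping the two factors non-exchangeable. Marshalling this group-theoretic input is the technical heart of the argument, carried out in \cite{bcgGalois}; a related, more direct configuration-theoretic construction is given in \cite{east-vak}.
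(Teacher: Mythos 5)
The paper itself gives no proof of this theorem: it is quoted as a result of Easton and Vakil \cite{east-vak}, whose argument is quite different from yours. They prove a Belyi-type theorem in higher dimension and construct, for an $\alpha\in\bar{\QQ}$ not fixed by the given $\phi$, surfaces of general type as branched covers whose branch configuration encodes $\alpha$, and then show that the irreducible component of the moduli space remembers the Galois orbit of $\alpha$; their components are positive-dimensional, not isolated points. The companion result of Bauer--Catanese--Grunewald \cite{bcgGalois}, cited in the same section, also avoids rigid surfaces: it uses families of surfaces isogenous to a product in which one factor is a varying hyperelliptic curve whose branch points encode $\alpha$, and obtains the stronger faithfulness on connected components.

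Your reduction is correct as far as it goes: for a rigid surface with $K_S$ ample and $H^1(\Theta_S)=0$ (both hold for Beauville surfaces) the class $[S]$ is a reduced isolated point, hence an irreducible component, fixed by $\phi$ if and only if $S\cong S^\phi$; and the structure theorem for surfaces isogenous to a product does recover the unordered pair $\{C_1,C_2\}$ from $S$. The genuine gap is the one you flag yourself, and it cannot be outsourced to either reference. You need, for \emph{every} $\phi\neq 1$, a triangle curve $C$ with $C^\phi\not\cong C$ \emph{as a curve} --- not merely as a dessin, since a Galois-moved dessin may sit on a Galois-fixed curve --- realized as a factor of a Beauville surface with a non-interchangeable second factor and a free action on the product. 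Classical faithfulness on dessins supplies none of this; the statement that $Gal(\bar{\QQ}/\QQ)$ acts faithfully on Beauville surfaces is a hard theorem in its own right, established only later (Gonz\'alez-Diez and Jaikin-Zapirain) by substantial group theory, and is contained neither in \cite{bcgGalois} nor in \cite{east-vak}. As written, your argument therefore reduces the theorem to an input that is at least as difficult as the theorem itself and was not available at the time.
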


\subsection{Topology sometimes determines connected components}

  There are cases where the presence of a big fundamental group implies that a connected component 
  of the moduli space is determined by some topological invariants. 
  
  A typical case is the one of surfaces isogenous to a product (\cite{isogenous}),
 where a surface is said to be isogenous to a (higher) product if and only if   it is a quotient
  $$ (C_1 \times C_2)/G, $$
  where $C_1, C_2$ are curves of genera $g_1, g_2 \geq 2$, and $G$ is a finite group acting freely on $ (C_1 \times C_2)$.

  \begin{theo}\label{fabiso}(see \cite{isogenous}). 
  
  a) A
     projective smooth surface is isogenous to a higher product  if and only if
the following two conditions are satisfied:

1) there is an exact sequence
$$
1 \rightarrow \Pi_{g_1} \times \Pi_{g_2} \rightarrow \pi = \pi_1(S)
\rightarrow G \rightarrow 1,
$$
where $G$ is a finite group and where $\Pi_{g_i}$ denotes the fundamental
group of a compact curve of genus $g_i \geq 2$;

2) $e(S) (= c_2(S)) = \frac{4}{|G|} (g_1-1)(g_2-1)$.

\noindent
b) Any surface $X$ with the
same topological Euler number and the same fundamental group as $S$
is diffeomorphic to $S$. The corresponding subset of the moduli space,
$\mathfrak{M}^{top}_S = \mathfrak{M}^{diff}_S$, corresponding to
surfaces orientedly homeomorphic,
resp. orientedly diffeomorphic to $S$, is either
irreducible and connected or it contains
two connected components which are exchanged by complex
conjugation.

In particular, if $S'$ is orientedly diffeomorphic to $S$, then $S'$ is
deformation equivalent to $S$ or to $\bar{S}$.
\end{theo}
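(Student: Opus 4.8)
\Proof (plan).
The plan is to first establish the group-theoretic characterization in (a) and then bootstrap it to the moduli statement in (b).

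\textbf{Part (a).} For the forward implication I would observe that if $S=(C_1\times C_2)/G$ with $G$ acting freely, then $C_1\times C_2\to S$ is an unramified Galois cover with group $G$; the homotopy exact sequence of this covering gives at once the extension in 1), since $\pi_1(C_1\times C_2)=\Pi_{g_1}\times\Pi_{g_2}$, while multiplicativity of the topological Euler number under finite \'etale covers gives $e(S)=e(C_1\times C_2)/|G|=\tfrac{4}{|G|}(g_1-1)(g_2-1)$, which is 2). For the converse, given 1) and 2), I would pass to the \'etale $G$-cover $\widehat S\to S$ attached to the surjection $\pi_1(S)\to G$, so that $\pi_1(\widehat S)\cong\Pi_{g_1}\times\Pi_{g_2}$ and $e(\widehat S)=|G|\,e(S)=4(g_1-1)(g_2-1)$. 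The two projections $\Pi_{g_1}\times\Pi_{g_2}\to\Pi_{g_i}$ are surjections with finitely generated (indeed surface-group) kernel, hence by the Siu--Beauville--Catanese theorem on fibrations of K\"ahler surfaces each is induced by a holomorphic fibration $f_i\colon\widehat S\to C_i$ with connected fibres onto a curve of genus $g_i$ \emph{(possibly after replacing $\widehat S$ by its complex conjugate --- this is the source of the dichotomy in (b))}. The Zeuthen--Segre relation for $f_1$ reads $e(\widehat S)=4(g_1-1)(g_2-1)+\sum_t\bigl(e(F_t)-e(F)\bigr)$ with all correction terms $\geq 0$, so the numerical identity above forces every fibre to be smooth; the same holds for $f_2$. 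I would then check that the product map $(f_1,f_2)\colon\widehat S\to C_1\times C_2$ induces an isomorphism on $\pi_1$ and restricts to a biholomorphism on each fibre, whence it is a biholomorphism. Since $G$ acts freely, $S$ is isogenous to a product.

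\textbf{Part (b): reduction and rigidity.} If $X$ has the same fundamental group and Euler number as $S$, then $\pi_1(X)$ carries the extension 1) and $e(X)$ satisfies 2), so (a) applies and $X$ is itself isogenous to a product. Next I would show that the discrete invariants are read off intrinsically from $\pi=\pi_1(S)$: the normal subgroup $K:=\Pi_{g_1}\times\Pi_{g_2}$ is canonically determined in $\pi$ (one argues it is the unique normal subgroup of finite index of that isomorphism type, using the rigidity of surface groups and of their direct products), so that $G=\pi/K$, the genera, the mixed versus unmixed alternative, and the topological type of the two $G$-actions on the covers $C_i\to C_i/G$ are all invariants of $\pi$ together with $e$.

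\textbf{Part (b): connectedness and the conjugation ambiguity.} The surfaces isogenous to a product realizing a fixed such combinatorial datum are parametrized by the choice of the two Galois $G$-covers of prescribed ramification type, that is, by a product of Hurwitz/Teichm\"uller moduli of $G$-curves, which is connected; moreover ``isogenous to a product'' is open and closed under deformation (see \cite{isogenous}), so each such family is a union of connected components of the moduli space. The only remaining freedom is the orientation choice in realizing the abstract isomorphism $\pi_1(X)\cong\pi$ by the geometric fibrations of part (a), and this yields $\mathfrak{M}^{diff}_S$ as either a single connected component or two components interchanged by complex conjugation $X\mapsto\bar X$. Since complex conjugation is orientation-preserving in real dimension $4$, $\bar X$ is orientedly diffeomorphic to $X$; hence all surfaces in $\mathfrak{M}^{top}_S$ are orientedly diffeomorphic, giving $\mathfrak{M}^{top}_S=\mathfrak{M}^{diff}_S$, and by Ehresmann's theorem (\cite{ehre}) deformation-equivalent surfaces are diffeomorphic, which yields the final ``in particular'' assertion. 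I expect the main obstacle to be precisely the realization step of part (a) combined with this orientation bookkeeping: controlling exactly when the holomorphic fibration exists for $X$ rather than for $\bar X$ is what pins down the one-or-two-component alternative and is the most delicate ingredient of the argument.
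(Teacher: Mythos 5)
The paper does not prove this statement: it quotes it from \cite{isogenous}, so there is no internal proof to compare against. Your reconstruction follows the same strategy as the proof in that reference (covering-space exact sequence and multiplicativity of $e$ for the forward direction; passing to the \'etale $G$-cover, invoking the Siu--Beauville--Catanese fibration theorem for the two surjections onto $\Pi_{g_i}$, and using Zeuthen--Segre to force all fibres smooth for the converse; then deformation-openness/closedness of the class and connectedness of the relevant families of curves with $G$-action, up to complex conjugation, for part b), and it is essentially correct. Three points need tightening. First, the parenthetical in part (a) about ``possibly replacing $\widehat S$ by its complex conjugate'' is misplaced: the fibration theorem produces holomorphic fibrations for the given complex structure, so no conjugation enters in (a); the conjugation dichotomy arises only in (b), from the fact that the topological type of a $G$-action on a curve determines the corresponding family of $G$-curves only up to orientation. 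Second, your Zeuthen--Segre identity silently assumes the general fibre of $f_1$ has genus exactly $g_2$; what you actually get a priori is that $\pi_1(F)$ surjects onto a finite-index subgroup of $\ker(\pi_1(\widehat S)\to\Pi_{g_1})\cong\Pi_{g_2}$, hence fibre genus $h\geq g_2$, and it is the resulting inequality $e(\widehat S)\geq 4(g_1-1)(h-1)\geq 4(g_1-1)(g_2-1)$ together with 2) that forces $h=g_2$ and smoothness of all fibres. Third, the claim that $K=\Pi_{g_1}\times\Pi_{g_2}$ is ``the unique normal subgroup of finite index of that isomorphism type'' is not true as stated and is not how the invariance of the data is established in \cite{isogenous}; there one uses the (geometric) uniqueness of the minimal realization $S=(C_1\times C_2)/G$ and a more careful group-theoretic analysis of subgroups of product type.
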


Other non trivial examples  are the cases of Keum-Naie surfaces, Burniat surfaces and Kulikov surfaces
(\cite{keumnaie} , \cite{burniat1}, \cite{chancough}): for these classes of surfaces the main result is that any surface
homotopically equivalent to a surface in the given class belongs to a unique irreducible connected component of
the moduli space.

Just to give a flavour of some of the arguments used,  let us consider a simple example which I worked
out together with Ingrid Bauer.

Let $S$ be a minimal surface of general type with $ q(S) \geq 2$.
Then we have the Albanese map
$$\alpha \colon S \ra A: = Alb (S),  $$
and $S$ is said to be of Albanese general type if $\alpha (S) : = Z$ is a surface.
This property is a topological property (see \cite{albanese}), since $\alpha$ induces a homomorphism of cohomology algebras
$$\alpha ^* \colon H^* (A, \ZZ) \ra H^* (S, \ZZ)$$
and $H^* (A, \ZZ) $ is the full exterior algebra  $\Lambda^* (H^1 (A, \ZZ)) \cong \Lambda^* (H^1 (S, \ZZ))$ over $H^1 (S, \ZZ)$.

In particular, in the case where $ q(S)= 2$, the degree $d$ of the Albanese map equals
the index of the image of $\Lambda^4 H^1 (S, \ZZ)$ inside $H^4 (S, \ZZ) = \ZZ [S]$.

The easiest case is the one when $d=2$, because then $K_S = R$, $R$ being the ramification divisor.
Observe that the Albanese morphism factors through the canonical model $X$ of $S$,
and a morphism $ a \colon X \ra A$.

Assume now that $a$ is a finite morphism, so that $2K_X = a^*(a_* (K_X))$.
In particular, if we set $D : = a_* (K_X)$, then $ D^2 = 2 K_X^2 =  2 K_S^2$,
and this number is also a topological invariant.

By the standard formula for double covers we have that $ p_g (S) = h^0 (L) + 1$,
where $D$ is linearly equivalent to $2L$;
hence, if $L$ is a polarization of type $(d_1, d_2)$, then 
$ p_g (S) = d_1 d_2 + 1$, $ D$ is a polarization of type $(2d_1, 2d_2)$,
and $4 d_1 d_2  = 2 L^2 = K_S^2$, hence in particular we have 
$$  K_S^2 = 4 (p_g -1) = 4 \chi(S), $$
since $q(S) = 2$.

I can moreover recover the polarization type $(d_1, d_2)$ (where $d_1$ divides $d_2$) using the fact
that $2 d_1$ is exactly the divisibility index of $D$. 
This is in turn the divisibility of $K_S$, since $K_S$ gives a linear form on $H^2 (A, \ZZ)$
simply by the composition of pushforward and cup product, and this linear form is represented by the class
of $D$.
Finally, the canonical class $K_S$  is a differentiable invariant of $S$ (see \cite{don5} or \cite{mor}).

The final argument is that, by formulae due to  Horikawa (\cite{quintics}), necessarily if $  K_S^2 =  4 \chi(S)$
the branch locus has only negligible singularities (see \cite{keumnaie}), which means that the normal finite cover branched over $D$
has rational double points as singularities.

\begin{theo}{\bf (Bauer-Catanese)}\label{doublecover}
Let $S$ be a minimal surface of general type whose canonical model $X$ is a finite double cover
of an Abelian surface $A$, branched on a divisor $D$ of type $(2d_1, 2 d_2)$.
Then $S$ belongs to an irreducible connected component $\sN$ of the moduli space of dimension $ 4 d_1 d_2 + 2 = 4 \chi(S) + 2$.

Moreover, 

1) any other surface which is diffeomorphic to such a surface $S$ belongs to the component $\sN$.

2) The Kuranishi space $Def(X)$ is always smooth.

\end{theo}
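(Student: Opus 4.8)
The plan is to realize $\sN$ as the image in $\mathfrak M^{can}$ of the natural parameter space for the construction data, namely triples $(A,L,D)$ with $A$ an abelian surface, $L$ a polarization of type $(d_1,d_2)$, and $D\in|2L|$ a reduced divisor with at most negligible singularities; the double cover $a\colon X\ra A$ determined by $a_*\hol_X=\hol_A\oplus L^{-1}$ and branched on $D$ is then the canonical model of a surface $S$ in our class. First I would compute the dimension of this parameter space. The moduli space of polarized abelian surfaces of fixed type $(d_1,d_2)$ is irreducible of dimension $3$ (a quotient of the Siegel upper half space $\mathfrak{H}_2$), and since on a fixed $A$ any two polarizations of type $(d_1,d_2)$ in the same algebraic class differ by a translation through the isogeny $\phi_L$, it suffices to let $D$ vary in the linear system $|2L|$, of dimension $h^0(2L)-1=4d_1d_2-1$ by Riemann--Roch. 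The residual identifications (translations by $K(L)$, and $-1$) being finite, the parameter space is irreducible of dimension $3+(4d_1d_2-1)=4d_1d_2+2=4\chi(S)+2$, which is the asserted dimension of $\sN$.

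Next I would establish smoothness of $\Def(X)$ and completeness of the family at once. The tautological family of double covers over the smooth irreducible parameter space produces a Kodaira--Spencer map into $\Ext^1(\Omega^1_X,\hol_X)$ (this is the correct tangent space to $\Def(X)$ since $X$ is reduced, by Grauert's Kuranishi type theorem). To see surjectivity I would show that every small deformation of $X$ stays inside the family: after passing to the canonical model, $a$ is the Albanese map, so the relative Albanese of a one-parameter deformation $\mathcal{X}\ra\Delta$ yields a family of abelian surfaces $\mathcal{A}\ra\Delta$ with a relative map $\mathcal{X}\ra\mathcal{A}$; its degree remains $2$ and $a_*\hol$ keeps the split shape $\hol_{\mathcal{A}}\oplus L^{-1}$, so the deformation is again one of the triple $(A,L,D)$. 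Surjectivity of Kodaira--Spencer onto a smooth base then forces, by the versality criterion that a complete family with smooth base makes Kuranishi space smooth (the Corollary following Kuranishi's theorem, in its complex-space version via Grauert), the smoothness of $\Def(X)$ --- this is assertion $2)$ --- and shows that the image $\sN$ is open in $\mathfrak M^{can}$.

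For the statement that $\sN$ is a whole connected component and for assertion $1)$, I would invoke the differentiable invariance of the construction data established in the discussion preceding the theorem. A surface $S'$ orientedly diffeomorphic to $S$ has the same $\chi$ and the same $q=q(S)=2$; its Albanese image is again a surface (Albanese general type being a topological property, \cite{albanese}); the Albanese degree is again $2$, being the index of $\Lambda^4 H^1(S,\ZZ)$ in $H^4(S,\ZZ)$; and $K^2_{S'}=4\chi$ forces the branch divisor to carry only negligible singularities (\cite{keumnaie}, \cite{quintics}), so $X'$ is again a double cover of an abelian surface with rational double points. The divisibility of $K_{S'}$, a differentiable invariant by Seiberg--Witten/Donaldson theory (\cite{don5}, \cite{mor}), recovers the polarization type $(d_1,d_2)$, whence $S'\in\sN$. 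Since by Ehresmann's theorem deformation equivalent surfaces are orientedly diffeomorphic, every connected component meeting $\sN$ is contained in it; together with the openness and irreducibility proved above, $\sN$ is precisely one irreducible connected component.

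The main obstacle I expect is the completeness step of the second paragraph: rigorously showing that an arbitrary small deformation of the singular canonical model $X$ is still a finite double cover of an abelian surface of the same polarization type, with $a_*\hol_X$ of the prescribed rank-two split form. This requires controlling the relative Albanese map across the deformation and using the negligible-singularities input to ensure that neither the branch locus nor the rank of the direct image jumps. This deformation-invariance of ``being in the family'' is exactly what upgrades the a priori inequality $\dim\Def(X)\ge\dim\sN$ to an equality and pins down the connected component.
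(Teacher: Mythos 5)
Your proposal is correct and follows essentially the same route as the paper: the heart of the paper's argument is exactly your third paragraph, namely the recovery of the construction data (Albanese general type, Albanese degree $2$, $K_S^2=4\chi$, and the polarization type $(d_1,d_2)$ via the divisibility of $K_S$, using \cite{albanese}, \cite{quintics}, \cite{keumnaie} and the differentiable invariance of $K_S$ from \cite{don5}, \cite{mor}) from the oriented differentiable structure. The dimension count $3+(4d_1d_2-1)$ and the completeness-of-the-family argument giving surjectivity of Kodaira--Spencer and hence smoothness of $\Def(X)$ are the standard complements the paper leaves implicit, and you supply them correctly.
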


The assumption that $X$ is a finite double cover is a necessary one. 

For instance, Penegini and Polizzi (\cite{pepo2}) construct surfaces
with $p_g (S)= q (S)= 2$ and $ K^2_S=6$ such that for the general surface the canonical divisor is ample (whence $ S=X$),
while the Albanese map, which is generically finite of  degree $2$, contracts an elliptic curve $Z$ with $ Z^2 = -2$ to a point. The authors show then that the
corresponding subset of the moduli space consists of three irreducible connected components.

Other very interesting examples with degree $d=3$ have been considered by Penegini and Polizzi in \cite{pepo}.

 \section{Smoothings and surgeries}
 
 Lack of time prevents me to develop this section.
 
I refer the reader to \cite{cime} for a general discussion, and to the articles \cite{man4} and  \cite{korean} for  interesting applications of the $\QQ$-Gorenstein smoothings
technique (for the study of components of moduli spaces, respectively for the construction of new interesting surfaces).

There is here a relation with the topic of compactifications of moduli spaces. Arguments to show that certain subsets
of the moduli spaces are closed involved taking limits of canonical models and studying certain singularities (see \cite{cat2}, \cite{man3}, see also \cite{riem} for the relevant  results on deformations of singularities); 
in \cite{k-sb} a more general study was pursued of the singularities allowed on the boundary of the moduli space of surfaces.
I refer to the article by Koll\'ar in this Handbook for the general problem of compactifying moduli spaces (also Viehweg devoted a big effort into this enterprise, see \cite{vieh}, \cite{lastopus}, another important reference is \cite{nikos}). 

An explicit study of compactifications of the moduli spaces of surfaces of general type was pursued in \cite{opstall}, \cite{ale-pardini},
\cite{wenfei}, \cite{soenkecomp}. 

 There is here another relation, namely  to the article by Abramovich and others in this Handbook, since the deformation of pairs
 $(Y,D)$ where $Y$ is a smooth complex manifold and $D = \cup_{i=1, \dots, h}D_i$ is a normal crossing divisor, are governed by the
 cohomology groups $$H^i (\Theta_Y (- \log D_1, \dots ,- \log D_h)),$$ for $i=1,2$, and where the sheaf  $\Theta_Y (- \log D_1, \dots ,- \log D_h)$
 is the Serre dual of the sheaf $\Omega^1_Y ( \log D_1, \dots ,\log D_h)(K_{{Y}})$, with its residue sequence 
 $$ 0 \ra \Omega^1_{{Y}} (K_{{Y}}) \ra 
\Omega^1_{\tilde{Y}}(\log D_1, \dots ,\log D_h)
(K_{{Y}}) \ra
\bigoplus_{i=1}^3 \hol_{D_i} (K_{{Y}})\ra 0.
$$

These sheaves are the appropriate ones to calculate the deformations of ramified coverings, see for instance \cite{cat1}),\cite{Pardini}, \cite{bidouble},  \cite{burniat2}, and especially  \cite{burniat3}). 

I was taught about these by David Mumford back in 1977, when he had just been working on the Hirzebruch proportionality
principle in the non compact case (\cite{Hirzprop}).

\bigskip

\noindent
{\bf Acknowledgements.}

I would like to thank Ingrid Bauer, Barbara Fantechi and  Keiji Oguiso for interesting conversations, Wenfei Liu and JinXing Cai
for email correspondence, S\"onke Rollenske, Antonio J. Di Scala and two referees for reading the manuscript with care and pointing out corrections. 

I am grateful to Jun Muk Hwang, Jongnam Lee and Miles Reid for  organizing the winter school `Algebraic surfaces and related topics', which took place at the KIAS in Seoul
in March 2010: some themes treated in those lectures are reproduced in this text.

A substantial part of the paper was written when I was a visiting KIAS scholar in  March-April 2011: I am very grateful for the
excellent atmosphere I found at the KIAS and for  the very nice  hospitality  I received from Jun Muk Hwang
and Jong Hae Keum.

%changes below here
%changes below here
%changes below here
%changes below here
%changes below here
%changes below here

%%%%%%%%%%%%%%%%%%%%%%%%%%%%%%%%%%%%%%%%%%%%%%%%%%%%%%

\end{document}